\newcommand{\R}{\mathbb{R}}
\renewcommand{\S}{{\mathbb{S}^{N-1}}}
\newcommand{\un}{\mathbf{1}}
\newcommand{\eps}{\varepsilon}
\newcommand{\D}{\mathcal{D}}
\newtheorem{defi}{Definition}
\newtheorem{lem}{Lemma}
\newtheorem{prop}{Proposition}
\newtheorem{thm}{Theorem}
\newtheorem{assumption}{Assumption}
\newtheorem{ex}{Example}
\theoremstyle{remark}
\newtheorem{rem}{Remark}
\title{ Phasefield theory \\
for fractional diffusion-reaction equations \\
and applications}
\author{Cyril Imbert\footnote{Universit\'e Paris-Dauphine, CEREMADE, UMR CNRS 7534,
    place de Lattre de Tassigny, 75775 Paris cedex 16, France, \texttt{imbert@ceremade.dauphine.fr}}
and Panagiotis E. Souganidis\footnote{University of Chicago, Dept. of Mathematics,
5734 S. University Avenue Chicago, Illinois 60637, USA, \texttt{souganidis@math.uchicago.edu}}
}
\date{\today}
\begin{document}
\maketitle

%%%%%%%%%%%%%%%%%%%%%%%%%%%%%%%%%%%%%%%%%%%%%%%%%%%%%%%%%%%%%%%%%%%%%%%%%%%%%
\begin{quote} \footnotesize
  \noindent \textsc{Abstract.}  This paper is concerned with
  diffusion-reaction equations where the classical diffusion term,
  such as the Laplacian operator, is replaced with a singular
  integral term, such as the fractional Laplacian operator.  As far as
  the reaction term is concerned, we consider bistable
  non-linearities. After properly rescaling (in time and space) these
  integro-differential evolution equations, we show that the limits of
  their solutions as the scaling parameter goes to zero exhibit
  interfaces moving by anisotropic mean curvature. The singularity and
  the unbounded support of the potential at stake are both the novelty
  and the challenging difficulty of this work.
\end{quote}
%%%%%%%%%%%%%%%%%%%%%%%%%%%%%%%%%%%%%%%%%%%%%%%%%%%%%%%%%%%%%%%%%%%%%%%%%%%%%%%

\paragraph{Keywords:} fractional diffusion-reaction equations,
traveling wave, phasefield theory, anisotropic mean curvature motion,
fractional Laplacian, Green-Kubo type formulae

\paragraph{Mathematics Subject Classification:} 34E10, 45G05, 45K05, 47G20, \\49L25, 53C44

\section{Introduction}

This paper is concerned with diffusion-reaction equations where the
classical diffusion is replaced with a singular integral term.
Our aim is somewhat classical: to show that the limit of their
solutions after properly rescaling them in time and space exhibit 
a moving interface. However, we will deal with integral term
whose potential is anisotropic, singular and with unbounded support. 

\paragraph{Fractional diffusion-reaction equations.} We 
consider for $t>0$ and $x\in \R^N$, $N\ge 2$,
\begin{equation}
\label{eq:rd>1}
\partial_t u^\eps + \frac1{\eps^2} \bigg\{ -\mathcal{I}_\alpha^\eps u^\eps 
+  f (u^\eps) \bigg\} =0 
\end{equation}
and
\begin{equation}
\label{eq:rd=1}
\partial_t u^\eps + \frac1{\eps^2 |\ln \eps|} \bigg\{ -\mathcal{I}_1^\eps u^\eps 
+  f (u^\eps) \bigg\} =0 
\end{equation}
where $\mathcal{I}_\alpha^\eps$ is a singular integral operator
depending on a parameter $\alpha \in (0,2)$ and $f$ is a bistable
non-linearity.
\begin{ex}[Standing example]
The standing example for $f$ and $\mathcal{I}^\eps_\alpha$ are
$$
f(u) = u (u^2-1) \quad \text{and} \quad \mathcal{I}_\alpha^\eps u = \frac1{\eps^\alpha} \Delta^{{\frac\alpha{2}}} u 
$$
where $\Delta^{{\frac\alpha{2}}}$ denotes the fractional Laplacian of the function $u$. We recall
that 
$$
\Delta^{{\frac\alpha{2}}} u = \int ( u (x+z) -u (x)) \frac{dz}{|z|^{N+\alpha}}
$$
where the singular integral must be undertood in the sense of Cauchy's principal value. 
\end{ex}
More generally, we will consider singular integral operators of the following form
\begin{equation} \label{eq:op}
\mathcal{I}_\alpha^\eps u (x) = 
\int \bigg(u(x+\eps z) - u (x) - \eps D u(x) \cdot z \un_B ( \eps z) \bigg) J(z) dz 
\end{equation}
where $B$ denotes the unit ball and where the function $J : \R^N \to \R$, which will be often referred to 
as the \emph{potential}, can be of two types
\begin{equation}\label{def:J}
\text{either } \qquad J (z) = g \left(\frac{z}{|z|} \right) \frac1{|z|^{N+\alpha}} \qquad
\text{ or } \qquad J \in L^1 (\R^N) \cap C_c (\R^N)
\end{equation}
with  $\alpha \in [1,2)$ and $g : \S = \{ z \in \R^N: |z|=1 \} \to (0,+\infty)$ continuous
and where $C_c (\R^N)$ denotes the space of continuous functions with bounded support. 
The first potential will be referred to as the singular one while the second one
will be referred to the regular one. As far as the standing example is concerned, $J$ is
singular with $g \equiv 1$. 

\paragraph{Phasefield theory for diffusion-reaction equations.}  
In \cite{chen}, Chen proved rigourously that the solution of the
Allen-Cahn equation \cite{ac} generates a front moving with mean
curvature as long as the front is regular. Thanks to definition of
fronts past singularities \cite{es,cgg}, Evans, Soner and the second author
\cite{ess92} proved that this is still true after the appearence of
singularities. Such results are generalized to a large class of
bistable non-linearities by Barles, Soner and the second author \cite{bss93}
where a general phasefield theory for reaction-diffusion equations is
introduced. In \cite{bs98,bd03}, an abstract method is developed in
order to deal with more general reaction-diffusion equations and to handle
boundary conditions. In particular, non-local reaction-diffusion
equations are considered in \cite{bs98} but integral operators are not
singular. As the proofs of the present paper will show it, it is a
challenging difficulty to be overcome.

\paragraph{Motivations.} Recently, Caffarelli and the second author studied
threshold dynamics-type algorithms corresponding to the fractional
Laplace operator for $\alpha \in (0,2)$. They proved that after
properly rescaling them, they converge to an interface moving by mean
curvature in the case $\alpha \ge 1$ and to a fractional mean
curvature in the case $\alpha <1$. Hypersurfaces with zero integral
curvature are studied in \cite{crs}. See also \cite{imbert09} where
the level-set approach \cite{es,cgg} is developed for such a geometric flow.

As far as applications are concerned, two main physical models
motivate the present study. The first application we have in mind is
dislocation dynamics. Dislocation theory aims at explaining the
plastic behaviour of materials by the motion of linear defects in
crystals. Peirls-Nabarro models \cite{kco} consist in approximating
the geometric motion of these defects by non-local diffusion-reaction
equations such as \eqref{eq:rd=1}. In \cite{fm}, such an approximation
is also used and formal expansions are performed. In \cite{gm05,gm06},
Garroni and M\"uller study a variational model for dislocations that
can be viewed as the variational formulation of the stationary version
of \eqref{eq:rd=1}.

The second application we have in mind is statistical mechanics and
more precisely stochastic Ising models. These models were introduced
by Ka\u{c}, Uhlenbeck and Hemmer \cite{kuh} (see also \cite{lp}) to
justify the validity of the Van der Waal's phase diagram. The
interaction beween particles is described by the Ka\u{c} potential. A
lot of work has been done since then to understand the hydrodynamic
limits of such interacting particle systems and it is beyond the scope
of this paper to give a complete list of references. However, we can
mention the papers by De Masi, Orlandi, Presutti and Triolo
\cite{dopt93,dopt94a,dopt94b,dopt96a,dopt96b} and Katsoulakis and the
second author \cite{ks94,ks95,ks97}. The interested reader is also
referred to the monograph of De Masi and Presutti \cite{dmp} and the
book of Spohn \cite{spohn}. In the papers we mentioned before,
hydrodynamic limit of stochastic Ising models with general dynamics
are studied. In particular, a mean field equation is derived, see
\cite{dopt94a}. By many ways, these equations can be viewed as
non-local reaction-diffusion equations. The next step is to show that
for appropriate scalings the solution of the mean field equation
approximates an anisotropic mean curvature motion; see for instance
\cite{dopt93,ks94}. Green-Kubo type formulae are provided for the
mobility and the diffusion matrix in terms of a standing wave
associated with the mean field equation.

On one hand, the Ka\u{c} potential is
assumed, in most papers, to be regular with a compact support.  On the
other hand, Lebowitz and Penrose \cite[Eq.(1.20b),(1.21a),p.100]{lp}
consider potentials $J$ that are singular; more precisely, they assume
that for small $z$, the singularity of $J$ is of the form
$|z|^{-N-\alpha}$ for $\alpha >0$.

\paragraph{Description of the results.} Our main result states that,
as $\eps \to 0$, the solutions $u^\eps$ of \eqref{eq:rd>1} and
\eqref{eq:rd=1} can only have two limits: the stable equilibria of the
bistable non-linearity $f$ (see Section~\ref{sec:twmf} for
definitions). The resulting interface evolves by anisotropic mean
curvature; moreover, Green-Kubo-type formulae are obtained: the
mobility and the diffusion matrix of the geometric flow are expressed
in terms of the standing wave associated with the bi-stable
non-linearity; see Eq.~\eqref{eq:mobility-rd}, \eqref{eq:matrix-rd>1}
and \eqref{eq:matrix-rd=1} below. Even if the proof follows the
classical idea of constructing barriers by using traveling waves, the
reader will see that classical arguments fail when extending the
barrier away from the front; several new ideas are needed to handle
the unboundedness of the support.  In order to handle the anisotropy
of the potential, we have to use ideas developed by Katsoulakis and
the second author \cite{ks95} and introduce correctors to cancel
oscillating terms by averaging them.  This implies in particular that
anisotropic traveling waves must be considered. But because the
integral term involves a singular potential, passing to the limit
in averaged oscillating terms is challenging and this constitutes the
core of the proof of the convergence theorem.

As the reader will see it when going through the preliminary section
or in the statement of the convergence theorem, several assumptions on
traveling waves and the linearized traveling wave equation are
necessary (if not mandatory). Even if we do not construct such waves
and correctors and assume that they exist, the reader can check that the
assumptions we make are natural. For instance, the decay
estimate~\eqref{eq:q-decay} is expected since its corresponds to the
one of the kernel of the fractional Laplacian in the one dimensional space. See
also \cite{cs}. We plan to construct them in a compagnion paper.

As explained above, we will consider two kinds of potentials: singular
and regular ones. As far as the singular case is concerned, we
distinguish two subcases, depending how singular is the potential at
the origin. Since potentials are positively homogeneous in the
singular case, potentials in the subcase $\alpha=1$ decay as
$|z|^{-N-1}$ when $|z| \to + \infty$. This corresponds to the
dislocation dynamics model. As the reader can see it, the scaling
involves a logarithmic term; this factor is well-known in physics and
the interested reader is referred to \cite{brown} for instance; see
also \cite{dfm,cs}.  An additional comment about singular and regular
potentials concern the Green-Kubo-type formulae. It turns out that
these formulae are different in singular and regular
cases. However, we give in appendix a formal argument to shed some
light on the link between these two formulae.

\paragraph{Additional comments.} As the reader can see it, we are not
able to deal with the case $\alpha <1$ even if, in view of the results
of \cite{cs}, we should observe an interface moving with fractional
mean curvature (see Section~\ref{sec:twmf} for a definition). In
this case, the equation should be rescaled in time as follows
\begin{equation}
\label{eq:rd<1}
\partial_t u^\eps + \frac1{\eps^{1+\alpha}} \bigg\{ -\mathcal{I}_\alpha^\eps u^\eps 
+  f (u^\eps) \bigg\} =0 
\end{equation}
with
\begin{equation*} 
\mathcal{I}_\alpha^\eps u (x) = 
\int \bigg(u(x+\eps z) - u (x)  \bigg) g \left( \frac{z}{|z|} \right) \frac{dz}{|z|^{N+\alpha}} 
\end{equation*}
for some $\alpha \in (0,1)$.  The reader can check that we are able to
pass to the limit in (the average of) oscillating terms (see
Lemma~\ref{lem:a-eps-conv-2} below), which is usually the difficult
part of the convergence proof.  We are even able to construct a
barrier close to the front. But because the diffusion-reaction is
non-local, we are stuck with extending the solution away from it. In
particular, the very slow decay of the potential at infinity does not
permit us to use the new ideas we introduced in the singular case $\alpha \ge 1$. 
This difficulty is unexpected since in \cite{cs}, this case is the easiest one. 
We hope to find a path toward this result in a future work.

In the one dimensional space, moving interfaces are points. 
Gonzalez and Monneau \cite{gm} considered such a case and
proved a result analogous to our main one by taking advantage
of the fact that the limit is a (system of) ordinary differential
equation(s). In particular, the restriction on the strength of
the singularity can be relaxed in this case. 

\paragraph{Organization of the article.} The first section is devoted
to preliminaries. In particular, traveling waves are introduced as
well as the linearized traveling wave equation which is the equation
satisfied by correctors; see Subsections~\ref{subsec:tw} and
\ref{subsec:ltwe}. We also introduce the geometric motion by mean
curvature (Subsection~\ref{subsec:geom}) together with its equivalent
definition in terms of generalized flows (Subsection~\ref{subsec:gen
  flow}).  Our main result is stated in Section~\ref{sec:frde}. In the
remaining of this section, we explain how to reduce the proof of this
convergence result to the construction of an appropriate barrier (see
above). Section~\ref{sec:construction} is dedicated to this
construction. The last section (Section~\ref{sec:lem}) contains to
core of the proof of the convergence result: the limit of the average
of oscillating terms.  Finally, we give in appendix a formal argument
to explain the link between the two Green-Kubo formulae obtained in
the convergence theorem.

\paragraph{Notation.} The Euclidian norm of $x \in \R^N$ is denoted by
$|x|$. The ball of center $x$ and of radius $r$ is denoted by $B_r
(x)$.  We simply write $B_r$ for $B_r(0)$ and $B=B_1$ denotes the unit
ball.  The scalar product of $x$ and $y$ is denoted by $x \cdot y$.
The unit sphere of $\R^N$ is denoted by $\S$.  The set of symmetric $N
\times N$ matrices is denoted by $\mathcal{S}_N$.  The identity matrix
(in any dimension) is denoted by $I$.

Given two real numbers $a,b \in \R$, $a \vee b$ denotes $\max(a,b)$
and $a \wedge b$ denotes $\min (a,b)$, $a^+$ denotes $a \vee 0$ and
$a^-$ denotes $- (a \wedge 0)$.  In particular, $a^\pm \ge 0$ and $a =
a^+ - a^-$.

Given a set $A$, $\un_A$ denotes its indicator function that equals
$1$ in $A$ and $0$ outside. The signed distance function $d_A (x)$
associated with $A$ equals the distance function to $\R^N \setminus A$
if $x \in A$ and the opposite of the distance function to $A$ if $x
\notin A$.

The set of continuous functions $f : \R^N \to \R$ with compact support
is denoted by $C^0_c$.

Given a family of locally bounded functions $f_\eps : \Omega \subset
\R^d \to \R$ indexed by $\eps >0$, the relaxed upper and lower limits
are defined as follows
$$
\liminf{}_* f_\eps (x) = \liminf_{\eps \to 0, y \to x} f_\eps (y)
\quad \text{ and } \quad \limsup{}_* f_\eps (x) = \limsup_{\eps \to 0,
  y \to x} f_\eps (y) \; .
$$
If $f_\eps = f$ for any $\eps>0$, these relaxed semi-limits coincide
with the lower and upper semi-continuous of a locally bounded function
$f$.

For traveling waves $q(r,e)$ and correctors $Q(r,e)$, $\dot{q}$ and
$\dot{Q}$ denote derivatives with respect to $r$.

\section{Preliminaries}
\label{sec:twmf}

This section is devoted to the presentation of the assumptions we make about non-linearities, 
traveling waves
and  linearized traveling wave equations. We also briefly describe the construction of fronts
at stake after rescaling fractional diffusion-reaction equations. 

\subsection{Fractional diffusion-reaction equations}

We can write \eqref{eq:rd>1}, \eqref{eq:rd=1} and \eqref{eq:rd<1} as follows
\begin{equation}
\label{eq:rd}
\partial_t u^\eps + \frac1{\eps \eta} \bigg\{ -\mathcal{I}_\alpha^\eps u^\eps 
+  f (u^\eps) \bigg\} =0 
\end{equation}
with 
\begin{equation}\label{def:eta}
\eta = 
\left\{ 
  \begin{array}{ll} 
    \eps & \text{ if } \alpha >1 \, , \\ 
    \eps |\ln \eps | & \text{ if } \alpha =1 \, , \\ 
    \eps^\alpha & \text{ if } \alpha < 1  \\ 
  \end{array}
\right.
\end{equation}
(see Remark~\ref{rem:choice-h}).

We will use later on that 
the potential $J$ satisfies in the singular case the following properties
\begin{equation} \label{cond1:J}
\left\{ 
\begin{array}{l}
J \text{ is smooth on $\R^N \setminus \{0\}$, even and non-negative} \medskip \\
|J (z)| \le \frac{C_J}{|z|^{N+\alpha}} \\
J (z) \sim g\left(\frac{z}{|z|} \right) \frac{1}{|z|^{N+\alpha}} \text{ as } |z| \to +\infty 
\end{array}
\right.
\end{equation}
with $\alpha \in (0,2)$. We also mention that if $\alpha < 1$, then
\begin{equation} \label{cond2:J}
|z|^2 J_\eps (z) + |\nabla ( |z|^2J_\eps (z) )| \le K (z) \in L^1 (B) 
\end{equation}  
where $J_\eps (z) = \eps^{-N - \alpha} J (\eps^{-1} z)$. 

\subsection{Bistable non-linearity}

We make the following assumptions. 
%---------------------------------------------------------------
\begin{assumption}[Bistable non-linearity]\label{assum:fbistable}
The non-linearity $f:\R \to \R$ is $C^1$ and such that
\begin{itemize}
\item
for all $h \in (0, H)$, they are constants $m_\pm(h)$ and $m_0(h)$ such that
\begin{equation} \label{eq:zero-f}
f(m_i(h) ) = h  , \quad i \in \{0,+,-\} \qquad \text{ and } \quad 
m_- (h) < m_0(h) < m_+(h).
\end{equation}
\item
$f>0$ in $(\bar{m}_-,\bar{m}_0)$ and $f<0$ in $(\bar{m}_0,\bar{m}_+)$.
\item
$f'(\bar{m}^\pm) > 0$ and $f'(\bar{m}_0) < 0$ where $\bar{m}_\pm = m_\pm(0)$ and $\bar{m}_0 =m_0(0)$. 
\end{itemize}
In particular, if $h >0$,  we have $\bar{m}_\pm \le m_\pm(h)$ and $\bar{m}_0 \ge m_0(h)$  and 
\begin{equation}\label{eq:m-eps-bar}
|m_\pm (h)- \bar{m}_\pm| \le C_f  h \; .
\end{equation}
\end{assumption}
%-------------------------------------------------------------

\subsection{Anisotropic traveling wave}
\label{subsec:tw}

In this subsection, we describe the anisotropic traveling waves
we will use in the construction of barriers in order to get the main
convergence result. In particular, we make precise the decay we
expect for such waves. This construction will be achieved in a future work. 
%-------------------------------------------------------------
\begin{assumption}[Anisotropic traveling wave]\label{assum:atw}
For $h \in (0,H)$, there then exist two continuous functions 
$q  : \R \times \S \to \R$ and $c : \R \times \S \to \R$ such that
$$
q (r,e,h) \to m_\pm (h) \text{ as } r \to \pm \infty
$$
(the limit being uniform with respect to $e \in \S$)
and
\begin{equation} \label{eq:tw-rd}
c \dot{q} - \mathcal{I}_{e} \left[q \right] + f(q) = h 
\end{equation}
where
$$
\mathcal{I}_{e} [q] \; (\xi) = \left\{ \begin{array}{ll}
 \int \bigg( q (\xi +{e}\cdot z) - q(\xi) 
- \dot{q} (\xi){e}\cdot z  \un_B (z) \bigg) J(z) dz & \text{ if } \alpha \ge 1 \\
 \int \bigg( q (\xi +{e}\cdot z) - q(\xi)  
\bigg) J(z) dz & \text{ if } \alpha < 1 
\end{array}\right.
$$
for any $e \in \S$.

The traveling wave $q$ is increasing in $r$ and the following estimates hold true
for any $(r,{e}) \in \R \times \S$
\begin{eqnarray} 
\label{eq:q-decay}
 |q(r,e,h) - m_\pm(h)| &=& O\left(\frac1{|r|^{1+\alpha}} \right) \quad \text{ as } r \to \pm \infty \\
\label{eq:q-estim}
\sup_{h>0} \{ \| D_{e} q \|_\infty + \| D^2_{e,e}  q \|_\infty \} &\le& C_q 
\end{eqnarray}
for some constant $C>0$ and with a limit uniform in $e \in \S$. 
The function $q$ also satisfies
\begin{equation}\label{eq:dotq-l2}
\int \dot{q}^2(\xi) d\xi \to \int (\dot{q^0})^2 (\xi) d\xi
\end{equation}
as $h \to 0$ and the limit is uniform in $e$. 

The speed $c$ satisfies
\begin{equation}\label{eq:cbar-monotone}
h c (e,h) > 0 \quad \text{ if } \quad h \neq 0
\end{equation}
and
\begin{equation} \label{eq:cbar}
\frac{c(e,h)}{h} \to \overline{c}({e}) \quad \text{ as } \quad h \to 0
\end{equation}
and the previous limit is uniform with respect to $e$. Moreover,
the function $\overline{c}({e})$ is continuous on $\S$.
\end{assumption}

\paragraph{Standing wave.}
Notice that \eqref{eq:cbar} implies in particular that $c(e,0)=0$. 
Hence $q(e,0)$ is a standing wave. It is denoted by $q^0$ in the remaing
of the paper.

\paragraph{Reduced integral operator.}
The operator $\mathcal{I}_{e}$ does not depend on $e$ if $J (z)$ is
radially symmetric, \textit{i.e.} when $J(z) = j (|z|)$. We illustrate
this fact in the next lemma where $\mathcal{I}_e$ is computed in the
case where $J (z) = g(\hat{z}) |z|^{-N-\alpha}$.
%----------
\begin{lem}
Assume that $J (z) = g (\frac{z}{|z|}) \frac1{|z|^{N+\alpha}}$. 
Then
$$
\mathcal{I}_e [q] (r) =  a_{11} (e) \int_\R (q(\xi + r) - q(\xi)) \frac{d\xi}{|\xi|^{1+\alpha}}   
$$
where 
$$
a_{11} (e) = \int_{\R^{N-1}} g (1,u) \frac{du}{(1+|u|^2)^{(N+\alpha)/2}}
$$
\end{lem}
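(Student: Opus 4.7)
The plan is to reduce the $N$-dimensional singular integral to a one-dimensional principal value by integrating out the $N-1$ directions orthogonal to $e$. Choose orthonormal coordinates so that $e = e_1$, write $z = (t, w) \in \R \times \R^{N-1}$ with $t = e\cdot z$, and note $dz = dt\, dw$ and $|z|^2 = t^2 + |w|^2$. In these coordinates the definition \eqref{eq:op} (in the form appearing in $\mathcal I_e$) becomes
\begin{equation*}
\mathcal{I}_e[q](r) = \int_\R\!\!\int_{\R^{N-1}} \bigl(q(r+t) - q(r) - \dot q(r)\, t\, \mathbf{1}_B(t,w)\bigr) J(t,w)\, dw\, dt.
\end{equation*}

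First I would replace the compensator $\mathbf{1}_B(t,w)$ by $\mathbf{1}_{\{|t|<1\}}$. The difference $\mathbf{1}_B(t,w) - \mathbf{1}_{\{|t|<1\}}(t)$ is supported on $\{|t|<1,\ |w|\ge\sqrt{1-t^2}\}$, and multiplied by $\dot q(r)\, t\, J(t,w)$ gives an integrand that is odd under $(t,w)\mapsto(-t,-w)$ (since $J$ is even by \eqref{cond1:J}); the integral is absolutely convergent because $|t|<1$ is bounded and the tail in $w$ is controlled by $\int_{|w|\ge\sqrt{1-t^2}}|w|^{-N-\alpha}dw \sim (1-t^2)^{-\alpha/2}$, integrable against $|t|\,dt$ for $\alpha<2$. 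So this term vanishes by symmetry.

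The main step is then to evaluate, for each fixed $t\ne 0$, the transverse integral $A(t) := \int_{\R^{N-1}} J(t,w)\, dw$. Using the scaling $w = |t|\,u$ with $u \in \R^{N-1}$, $dw = |t|^{N-1}du$, the homogeneity of $J$ gives
\begin{equation*}
A(t) = \frac{1}{|t|^{1+\alpha}} \int_{\R^{N-1}} g\!\left(\frac{(\mathrm{sgn}(t),u)}{\sqrt{1+|u|^2}}\right) \frac{du}{(1+|u|^2)^{(N+\alpha)/2}}.
\end{equation*}
By evenness of $g$ on $\S$ (equivalent to evenness of $J$) together with the change $u\mapsto -u$, the inner integral is the same for $t>0$ and $t<0$, and equals $a_{11}(e)$ with the stated homogeneous-degree-zero convention $g(1,u):=g((1,u)/\sqrt{1+|u|^2})$. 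Thus $A(t) = a_{11}(e)/|t|^{1+\alpha}$, and Fubini yields
\begin{equation*}
\mathcal{I}_e[q](r) = a_{11}(e)\int_\R \bigl(q(r+t) - q(r) - \dot q(r)\,t\,\mathbf{1}_{\{|t|<1\}}\bigr) \frac{dt}{|t|^{1+\alpha}},
\end{equation*}
and since the compensator is odd against the even kernel $|t|^{-1-\alpha}$ it drops out in the principal-value sense, giving the claimed one-dimensional representation.

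The only delicate point is the use of Fubini, which must be justified by first truncating the singularity at $\{|z|<\delta\}$ and showing the compensator plus the smoothness of $q$ (with the decay \eqref{eq:q-decay} and bounds \eqref{eq:q-estim}) make the truncated integrals absolutely convergent, then passing to the limit $\delta\to 0$; this is routine but is the step requiring care because it is where the principal-value cancellation is actually invoked.
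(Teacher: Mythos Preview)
The paper states this lemma without proof, so there is nothing to compare against; your approach is the natural one and is essentially correct.

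One point needs tightening. In your treatment of the compensator difference you bound the transverse tail by
\[
\int_{|w|\ge\sqrt{1-t^{2}}}|w|^{-N-\alpha}\,dw,
\]
but in $\R^{N-1}$ this behaves like $(1-t^{2})^{-(1+\alpha)/2}$, not $(1-t^{2})^{-\alpha/2}$, and against $|t|\,dt$ this crude estimate is \emph{not} integrable when $\alpha\ge 1$, which is precisely the range in which the compensator is present. The fix is to keep the full kernel: with $w=|t|\,u$,
\[
\int_{|w|\ge\sqrt{1-t^{2}}}\frac{dw}{(t^{2}+|w|^{2})^{(N+\alpha)/2}}
=|t|^{-1-\alpha}\int_{|u|\ge\sqrt{1-t^{2}}/|t|}\frac{du}{(1+|u|^{2})^{(N+\alpha)/2}},
\]
which is uniformly bounded on $\{|t|<1\}$ (near $t=0$ the inner integral is $O(|t|^{1+\alpha})$; near $|t|=1$ it is $O(1)$). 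With this correction the absolute convergence holds for all $\alpha\in(0,2)$ and the oddness argument goes through.

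A minor remark: the paper's displayed formula has $q(\xi)$ where $q(r)$ is intended (as the subsequent Remark about the one-dimensional fractional Laplacian confirms). You have proved the version with $q(r)$, which is the correct one.
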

%--------
\begin{rem}
We recognize the fractional Laplacian of order $\alpha$ in the one dimensional space (up 
to a multiplicative constant).
\end{rem}

\subsection{Linearized traveling wave equation}
\label{subsec:ltwe}

In this subsection, the linearized traveling wave equation is
considered.  Loosely speaking, we need to know that the kernel of the
linearized operator $\mathcal{L}$ reduces to $\R \dot{q}$ and, if $f$
is regular enough, so is the solution $Q$ of $\mathcal{L} Q (\xi)= Pf
(\xi,t,x)$ where $Pf$ is the projection of $f$ on the space orthogonal
to $\dot{q}$.  We need in particular to be able to say that $Q$ decays
at infinity. Let us be more precise now.  \medskip

The linearized  operator $\mathcal{L}$ associated with \eqref{eq:tw-rd} around
a solution $q$ is
\begin{equation} \label{eq:linearized-rd}
\mathcal{L} Q = c \dot{Q} - \mathcal{I}_{e} [Q] + f'\left(q \right) Q.
\end{equation}
Given a smooth function $d:[0,T] \times \R^N \to \R$, let us consider
$$
a (r,e,t,x) = \frac1{h} \int \bigg\{  q (r+ \frac{d(t,x+\eps z) - d(t,x)}\eps  , e ) -  q ( r + e \cdot z, e)  \bigg\} J(z) dz 
$$
where $h = \eps$ (resp. $\eps |\ln \eps|$, $\eps^\alpha$) if $\alpha >1$ (resp. $\alpha =1$, $\alpha <1$).
%---------------------------------------------------------------
\begin{assumption}[The linearized TW equation]\label{assum:Q}
$$
\mathrm{Ker} \; \mathcal{L} = \mathrm{Ker} \; \left( \mathcal{L} \right)^* 
= \mathrm{span} \; \dot{q}. 
$$
Moreover, if $d: [0,T] \times \R^N \to \R$ is a smooth function, 
there then exists a continuous solution $Q: \R \times \S \times [0,T] \times \R^N$ to 
$$
\mathcal{L} Q = P_{\dot{q}} a(\cdot,e,t,x)
$$
where $P_{\dot{q}}$ stands for the projection on the space orthogonal to $\mathrm{span} \; \dot{q}$.
In particular, there exists $C_Q>0$ such that for any $h,e,\xi$,
\begin{eqnarray}
\label{eq:Q-estim}
|\dot{Q}|+ |\partial_t Q|+|D_x Q| + |D_{e} Q| + | D^2_{e,e} Q | + | D^2_{x,x} Q |\le C_Q
\\
\label{eq:Q-decay}
Q  \to 0  \quad \text{ as } r \to +\infty 
\end{eqnarray}
where $C_Q$ does not depend on $h,e,t,x$ and the limit is uniform in $h,e,t,x$.  
\end{assumption}
%---------------------------------------------------------------

\subsection{Geometric motions}
\label{subsec:geom}

In this subsection, we introduce the geometric motions of fronts at
stake when rescaling the fractional diffusion-reaction equations.

It is well-known that singularities can appear on the front in finite
time when considering, for instance, the mean curvature motion.  We
thus classically use the level-set approach to define a front for all
times.  We recall that this approach consists in looking for a front
$\Gamma_t$ under the form $\{ x : u (t,x) =0\}$ and to derive a PDE
satisfied by $u$.  \medskip

\paragraph{Anisotropic mean curvature motion.}
In the case of an anisotropic mean curvature motion, we obtain the following degenerate and singular
parabolic equation,
\begin{equation} \label{eq:amcm}
  \partial_t u = \mu \left( \widehat{D u} \right) 
  \mathrm{Tr} \left((I -{ \widehat{D u} }\otimes{ \widehat{D u} }) A \left( \widehat{D u} \right) D^2 u \right) 
\end{equation} 
where $\mu: \S \to \R^+$ and $A:\S \to \mathcal{S}_N$ are continuous
functions and $I$ stands for the $N\times N$ identity matrix and $e =
\frac{p}{|p|}$.  We will see that the function $\mu$ (which will be referred
to as the mobibility) is given by the following
formula
\begin{equation} \label{eq:mobility-rd}
\mu (e) = \left\{ \int (\dot{q^0})^2 (\xi,e) d\xi\right\}^{-1}  \, .
\end{equation}
As far as the function $A$ is concerned, we distinguish cases. 
In the singular case and if $\alpha >1$, we have for all $e \in \S$
\begin{equation}
\label{eq:matrix-rd>1}
A (e)  =  \bigg\{ \int \int (q^0 (\xi + z_1,e) -q^0 (\xi,e))^2 \frac{dz_1}{|z_1|^{1+\alpha}}   d \xi \bigg\} A_g (e)
\end{equation}
with
\begin{equation}\label{def:ag}
A_g (e) =\alpha (\alpha -1)\int_{\R^{N-1}}  (1,u) \otimes (1,u) g(1,u) \frac{du}{(1+ |u|^2)^{(N+\alpha)/2}} 
\end{equation}
(where the space orthogonal to $e$ is identified with $\R^{N-1}$).
If $\alpha =1$, 
\begin{equation}
\label{eq:matrix-rd=1}
(\bar{m}_+-\bar{m}_-)^2 \int_{\mathbb{S}^{N-2} = \{ z \in \S: z \cdot e =0\}}  
\bigg( \theta \otimes \theta J(\theta) \bigg) \sigma (d \theta) \, .
\end{equation}
In the regular case, we have for all $e \in \S$
\begin{equation}\label{eq:matrix-rd>1bis}
A (e)  =  \int \dot{q^0}(\xi,e) \dot{q^0} (\xi+e \cdot z) z \otimes z J(z) dz \, .
\end{equation}
\begin{rem}
  As a matter of fact, in the singular case with $\alpha >1$, $A$'s
  given by \eqref{eq:matrix-rd>1} and \eqref{eq:matrix-rd>1bis} are
  the same, at least formally.  But it is not even clear that the
  integral defining $A(e)$ in \eqref{eq:matrix-rd>1bis} is well
  defined in the case $\alpha > 1$. A formal argument is given in
  Appendix.
\end{rem}

\paragraph{Fractional mean curvature motion.}
A fractional version of this motion can be defined. More precisely, for any $\alpha <1$,
one can consider the following PDE, 
\begin{equation} \label{eq:mnl}
  \partial_t u =  \mu \left( \widehat{D u} \right) \kappa [x,u(t,\cdot)]
  |D u |
\end{equation} 
where $\mu$ is defined by \eqref{eq:mobility-rd} and 
\begin{multline*}
\kappa [x,U] = 
\kappa^* [x,U] = 
\int \bigg\{ \un_{\{U(x+z) \ge  U(t,x),  e \cdot z \le 0\}} \\
- \un_{\{U(t,x+z) < U(t,x), e \cdot z > 0 \}}\bigg\}
g\left(\frac{z}{|z|} \right) \frac{dz}{|z|^{N+\alpha}} \; . 
\end{multline*}
This can also be written under the general form
\begin{multline*}
\kappa^* [x,U] = 
\nu \{ z \in \R^N : U(x+z) \ge  U(t,x),  e \cdot z \le 0\} \\ - 
\nu \{ z \in \R^N : U(t,x+z) < U(t,x), e \cdot z > 0 \}
\end{multline*}
for some non-negative Borel measure $\nu$ which is eventually
singular. A general theory is developed in \cite{imbert09} to prove that the
geometric flow is well defined.  The definition of a viscosity
solution for \eqref{eq:mnl} implies the use of the following quantity
\begin{multline*}
\kappa_* [x,U] = 
\nu \{ z \in \R^N : U(x+z) >  U(t,x),  e \cdot z < 0\} \\ - 
\nu \{ z \in \R^N : U(t,x+z) \le U(t,x), e \cdot z \ge 0 \} \, .
\end{multline*}

\paragraph{Geometric non-linearities.}
In the following, we will use the notation: if $\alpha \ge 1$,
\begin{equation}\label{def:F}
F (p,X) = 
-  \mu (\hat{p})  \mathrm{Tr} \left( \tilde{A} ( \hat{p} ) X \right) 
-\overline{c} (0,\hat{p}) |p| 
\end{equation}
and if $\alpha <1$,
\begin{equation}\label{def:Fbis}
F (p,X,[\phi]) = -  \mu (p) \kappa^* [x,\phi] |p| -\overline{c} (0,\hat{p}) |p| 
\end{equation}
for $p \neq 0$ and $\hat{p} = p /|p|$. Since non-linearities $F$ are discontinuous,
it is necessary to use the lower and upper semi-continuous envelopes $F_*$ and $F^*$ 
of $F$ in order to define viscosity solutions of \eqref{eq:amcm}. In the case $\alpha < 1$, 
we have 
\begin{eqnarray*}
F^* (p,X,[\phi]) =  -  \mu_* (\hat{p}) {\kappa}_* [x,\phi] |p| -\overline{c}_* (0,\hat{p}) |p| \\
F_* (p,X,[\phi]) =  -  \mu^* (\hat{p}) {\kappa}^* [x,\phi] |p| -\overline{c}^* (0,\hat{p}) |p| 
\end{eqnarray*}
with the convention $\hat{0} = 0$. 

\subsection{Generalized flows}
\label{subsec:gen flow}

As we explained it above, the level-set approach is necessary in order 
to define the anisotropic mean curvature motion of a curvature after
the onset of singularities. It is proved in \cite{bs98} (see also \cite{bd03})
that this notion of solution is intimately related with the notion of
generalized flows of interfaces whose definition is recalled next.
%-----------
\begin{defi}[Generalized flow for \eqref{eq:amcm}]\label{def:gen-flow}
  An family $\Omega = (\Omega_t)_{t>0}$ (resp. $U=(U_t)_{t>0}$) of open
  (resp. closed) sets of $\R^N$ is a \emph{generalized super-flow}
  (resp. \emph{generalized sub-flow}) of \eqref{eq:amcm} if for all
  $(t_0,x_0) \in (0,+\infty) \times \R^N$,  $h>0$, and for all
  smooth function $\phi : (0;+\infty) \times \R^N \to \R$ such that
\begin{enumerate}[label=(\roman{*})]
\item \label{0} \textbf{(Boundedness)} There exists $r>0$ such that
$$
\{ (t,x) \in [t_0,t_0+h] \times \R^N : \phi (t,x) \ge 0 \} \subset
[t_0,t_0+h] \times B (x_0,r),
$$
\item \label{i} \textbf{(Speed)} There exists $\delta_\phi>0$ such that
\begin{eqnarray*}
\partial_t \phi + F^* (D\phi,D^2\phi) \le -\delta_\phi \text{ in } [t_0,t_0+h]\times \bar{B}(x_0,r)
\\ 
\text{(resp. }  \partial_t \phi + F_* (D\phi,D^2\phi) \ge \delta_\phi \text{
  in } [t_0,t_0+h]\times \bar{B}(x_0,r) \text{),}
\end{eqnarray*}
\item \label{ii} \textbf{(Non-degeneracy)}
$$
D\phi \neq 0 \text{ in } \{ (s,y) \in [t_0,t_0+h] \times
\bar{B}(x_0,r) : \phi(s,y)=0 \},
$$
\item \label{iii}  \textbf{(Initial condition)}
\begin{eqnarray*}
  \{ y \in \bar{B}(x_0,r) : \phi (t_0,y) \ge 0 \} \subset \Omega_{t_0}, \\
  \text{(resp. } \{ y \in \bar{B}(x_0,r) : \phi (t_0,y) \le 0 \} \subset \R^N \setminus F_{t_0} \text{),}
\end{eqnarray*}
\end{enumerate}
then 
\begin{eqnarray*}
 \{ y \in \bar{B}(x_0,r) :  \phi(t_0+h,y) > 0\} \subset \Omega_{t_0+h} \\
\text{(resp. }  
\{ y \in \bar{B}(x_0,r) :  \phi(t_0+h,y) < 0\} \subset \R^N \setminus F_{t_0+h} \text{).} 
\end{eqnarray*}
\end{defi}
%---------
\begin{rem}
  Remark that this definition slightly differs from the one introduced
  in \cite{bs98}.  However, a quick look at the proof of the abstract
  method from \cite{bs98} that will be used below should convince the
  reader that this definition is adequate too.
\end{rem}

\section{The convergence result}
\label{sec:frde}

This section is devoted to statement of the main result of this paper.
We also explain why its proof reduces to the construction of an
appropriate ``barrier'' which will be constructed in the next section. 

Loosely speaking, we will prove that solutions of the fractional
diffusion-reaction equation~\eqref{eq:rd} approximate, as $\eps$ go to
$0$, the motion of a front moving with a normal speed equal to its
mean curvature.  Moreover, the results state that the mean curvature
motion is anisotropic and that mobilities and diffusion matrices are
given by Green-Kubo formulae (see \eqref{eq:mobility-rd},
\eqref{eq:matrix-rd>1} and \eqref{eq:matrix-rd=1}).

\subsection{Statement of the main result}

We now state our main result. 
%---------------------------------------------
\begin{thm}[Convergence result when $\alpha \ge
  1$] \label{thm:rd>1-convergence} Let $J$ be given by \eqref{def:J}
  with $\alpha \ge 1$ in the singular case and let $f$ be a bistable
  non-linearity. We suppose that Assumptions~\ref{assum:fbistable},
  \ref{assum:atw}, \ref{assum:Q} are satisfied by $f$.  Let $u^\eps$
  be the unique solution of \eqref{eq:rd>1} if $\alpha >1$ and of
  \eqref{eq:rd=1} if $\alpha=1$ associated with a continuous initial
  datum $u_0^\eps: \R^N \to [\bar{m}_-,\bar{m}_+]$ defined by
\begin{equation}\label{eq:ic}
  u_0^\eps (x) = q^0 \left( \frac{d_0 (x)}\eps, Dd_0 (x) \right)
\end{equation}
where $q^0$ is the standing wave associated with the
diffusion-reaction equation and $d_0$ is the signed distance function
to the boundary of a smooth set $\Omega_0$.

Let $u$ be the unique solution of the geometric
equation~\eqref{eq:amcm} supplemented with the initial condition
$u(0,x) = d_0 (x)$, where $\mu$ is given by \eqref{eq:mobility-rd} and
$A$ is defined in \eqref{eq:matrix-rd>1bis} in the regular case and
\eqref{eq:matrix-rd>1}, \eqref{eq:matrix-rd=1} in the singular
case.

Then the function $u^\eps$ satisfies
\begin{equation*}
  \left\{\begin{array}{ll}
      u^\eps \to \bar{m}_+  & \text{ in } \{ u > 0 \}\\
      u^\eps \to \bar{m}_- & \text{ in } \{ u < 0 \} 
    \end{array}\right.  \quad \text{ as } \quad \eps \to 0
\end{equation*}
where $\bar{m}_\pm$ denote the stable zeros of $f$; 
moreover both limits are local uniform.
\end{thm}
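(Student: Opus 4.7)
The plan is to invoke the abstract generalized-flow framework of Barles--Souganidis \cite{bs98} recalled in Subsection~\ref{subsec:gen flow}. Let $u$ denote the viscosity solution of \eqref{eq:amcm} and set $\Omega_t = \{u(t,\cdot) > 0\}$ and $F_t = \{u(t,\cdot) \ge 0\}$. It suffices to prove that $(\Omega_t)_{t>0}$ is a generalized super-flow of \eqref{eq:amcm} and $(F_t)_{t>0}$ a generalized sub-flow, since the half-relaxed limits $\limsup_* u^\eps$ and $\liminf_* u^\eps$ can then be squeezed between $\bar m_-$ and $\bar m_+$ on the two sides of $\{u=0\}$. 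The arguments for super- and sub-flow are symmetric, so I focus on the former: given $(t_0,x_0)$, $h>0$ and a test function $\phi$ as in Definition~\ref{def:gen-flow}, the task is to build, for small $\eps$, a local supersolution $v^\eps$ of \eqref{eq:rd} on $[t_0,t_0+h]\times \bar B(x_0,r)$ which dominates $u^\eps$ there; the comparison principle for \eqref{eq:rd} then delivers the conclusion of the definition.

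\textbf{Barrier via traveling waves and correctors.} In the parabolic neighborhood where $\phi$ is non-degenerate, introduce the signed distance $d(t,x)$ to $\{\phi=0\}$. Following the two-scale expansion of \cite{ks95}, try
$$v^\eps(t,x) = q\!\left(\tfrac{d(t,x)}{\eps} - \gamma(t),\,\widehat{Dd}(t,x),\,\bar h_\eps\right) + \eps\,Q\!\left(\tfrac{d(t,x)}{\eps},\,\widehat{Dd}(t,x),\,t,x\right) + \beta\eps,$$
where $\bar h_\eps \to 0^+$ provides the strict-supersolution slack, $\gamma(t)$ corrects the propagation speed, and $\beta$ is a constant. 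Plugging $v^\eps$ into the left-hand side of \eqref{eq:rd} and expanding in powers of $\eps$, the $O(\eps^{-1})$ contribution vanishes by the anisotropic traveling wave equation \eqref{eq:tw-rd}; the next order is oscillating in the fast variable $r = d/\eps$, and its component orthogonal to $\dot q$ is killed by the corrector via the linearized equation of Assumption~\ref{assum:Q}. What remains is the projection on $\mathrm{span}\,\dot q$, which after integration against $\dot q^2$ reproduces exactly $\mu(\widehat{D\phi})^{-1}\bigl(\partial_t \phi + F^*(D\phi, D^2\phi)\bigr)$; combined with the strict inequality in Definition~\ref{def:gen-flow}, this yields the desired sign. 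The formula \eqref{eq:mobility-rd} for the mobility emerges from $\int (\dot q^0)^2$, while the diffusion matrix \eqref{eq:matrix-rd>1}--\eqref{eq:matrix-rd=1} comes from the second-order Taylor expansion of $d(t,x+\eps z) - d(t,x)$ inside the singular kernel together with the reduction lemma of Subsection~\ref{subsec:tw}; the logarithmic correction at $\alpha = 1$ appears precisely at this step, matching the $|\ln\eps|$ factor in \eqref{def:eta}.

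\textbf{The main obstacle: global extension of the barrier.} The ansatz only defines $v^\eps$ in a tube around $\{\phi=0\}$, yet $\mathcal{I}_\alpha^\eps v^\eps(x)$ involves $v^\eps$ on all of $\R^N$. One must extend $v^\eps$ by saturating to $m_\pm(\bar h_\eps)$ outside the tube and then estimate the error introduced in $\mathcal{I}_\alpha^\eps v^\eps(x)$, for $x$ inside the tube, by this truncation. This step is painless when $J$ has compact support (the setting of \cite{bs98}) but is delicate here because $J$ is singular \emph{and} its support is unbounded. The remedy is to combine the tail decay \eqref{eq:q-decay}, $|q(r) - m_\pm| = O(|r|^{-1-\alpha})$, with the power-law bound $|J(z)| = O(|z|^{-N-\alpha})$ from \eqref{cond1:J}. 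A direct integration of the two decays shows that the truncation error on $\mathcal{I}_\alpha^\eps v^\eps$ is $O(\eps^{1+\alpha})$ when $\alpha > 1$ and $O(\eps|\ln\eps|)$ when $\alpha = 1$; after division by the scaling $\eps\eta$ of \eqref{def:eta}, both are $o(1)$, and the logarithmic factor in $\eta$ at $\alpha = 1$ is exactly what absorbs the critical tail. This is the step I expect to be the hardest, and the one that genuinely distinguishes the present problem from \cite{bs98}.

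\textbf{Conclusion.} Once $v^\eps$ is built and its global error controlled, the comparison principle for \eqref{eq:rd}, applied after verifying the initial inequality $u_0^\eps \le v^\eps(t_0,\cdot)$ via the well-prepared datum \eqref{eq:ic} together with \eqref{eq:m-eps-bar}, gives $u^\eps(t_0+h,\cdot) \le v^\eps(t_0+h,\cdot)$ on $\bar B(x_0,r)$. Sending $\eps \to 0$ and then $\bar h_\eps \to 0$ using \eqref{eq:cbar} produces the inclusion $\{\phi(t_0+h,\cdot) > 0\} \subset \Omega_{t_0+h}$ required by Definition~\ref{def:gen-flow}. The technical core of the whole argument is the rigorous passage to the limit in the averaged oscillating integrals appearing in the second paragraph, which when $J$ is singular is nontrivial and is isolated as Lemma~\ref{lem:a-eps-conv-2} below.
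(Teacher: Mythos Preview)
Your plan is the paper's: invoke the abstract method of \cite{bs98,bd03}, reduce to a barrier construction (Proposition~\ref{prop:barrier}), build the barrier from the anisotropic traveling wave plus an $\eta$-sized corrector, and isolate the limit of the averaged oscillating integral as the technical heart. Two things, however, are off.

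First, the setup of the abstract method is inverted. The families one must prove to be generalized super/sub-flows are \emph{not} $\{u(t,\cdot)>0\}$ and $\{u(t,\cdot)\ge0\}$ for the level-set solution $u$ --- those are flows tautologically --- but the sets $\Omega^1,\Omega^2$ defined intrinsically from $u^\eps$ through relaxed limits, as in the paper's proof of the theorem. Correspondingly, to get the super-flow property one builds a \emph{sub}-solution lying \emph{below} $u^\eps$ (so as to push $\liminf_*(u^\eps-\bar m_+)/\eta\ge 0$), not a super-solution above it; your signs are reversed throughout. Also, the core lemma for $\alpha\ge 1$ is Lemma~\ref{lem:a-eps-conv}; Lemma~\ref{lem:a-eps-conv-2} treats the case $\alpha<1$.

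Second, and more substantively, your handling of the extension away from the front is a genuine gap. You propose to saturate to $m_\pm$ outside the tube and claim the truncation error in $\mathcal I_\alpha^\eps v^\eps$ is $O(\eps^{1+\alpha})$ for $\alpha>1$ and $O(\eps|\ln\eps|)$ for $\alpha=1$, both $o(1)$ after division by $\eps\eta$. But for $\alpha=1$ this quotient is $\eps|\ln\eps|/(\eps^2|\ln\eps|)=\eps^{-1}$, exactly the size of the leading term, so not negligible. More to the point, a bare saturation does not yield a sub-solution in the transition zone at all: the paper performs a two-stage extension --- first $\bar U=\max(U,m_-)$, then a smooth interpolation $U^{\eps,\beta}=\psi(d)\bar U+(1-\psi(d))(m_+-\beta\eta)$ --- and checking the sub-solution property in the region $\{\gamma/2\le d\le 3\gamma/4\}$ rests on a commutator-type bound (Lemma~\ref{lem:ultime}) for $\psi_d\,\mathcal I_\alpha^\eps\bar U-\mathcal I_\alpha^\eps U^{\eps,\beta}$. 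That bound uses the product structure of $\mathcal J^\eps(\psi_d\varphi)$ together with $|\bar U-(m_+-\beta\eta)|=O(\eta)$ near the outer edge of the tube; it is not a direct tail integration of $q$ against $J$, and it is precisely where the restriction $\alpha\ge 1$ enters (the argument fails for $\alpha<1$, see the paper's remark after the case $d>3\gamma/4$).
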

%---------------------------------------------
\begin{proof}
In order to prove this theorem, we use the abstract method developed in \cite{bs98}
and \cite{bd03}. Consider  two open sets defined as
\begin{eqnarray*}
\Omega^1 & = & \mathrm{Int} \{ (t,x) \in (0,+\infty) \times \R^N : \liminf{}_* 
\frac{u^\eps-\bar{m}_+}\eta \ge 0 \} 
\subset (0,+\infty) \times \R^N \\
\Omega^2 & = & \mathrm{Int} \{ (t,x) \in (0,+\infty) \times \R^N : \limsup{}_* 
\frac{u^\eps - \bar{m}_-}\eta \le 0  \} 
\subset (0,+\infty) \times \R^N 
\end{eqnarray*}
where interior is considered with respect to $(0,+\infty) \times \R^N$.

We next define their traces at initial time by considering the lower
semi-continuous function $\chi = \un_{\Omega^1} - \un_{(\Omega^1)^c}$
and the upper semi-continuous function $\bar{\chi} =
\un_{(\Omega^2)^c} - \un_{\Omega^2}$ defined on $(0,+\infty) \times
\R^N$.  They can be extended at $t=0$ by setting $\chi(0,x)=
\liminf_{t\to 0,y\to x} \chi (t,y)$ and $\bar{\chi}(0,x)=
\limsup_{t\to 0,y\to x} \bar{\chi} (t,y)$. We now define
$$
\Omega^1_0 = \{ x \in \R^N : \chi (0,x) =1 \} \quad \text{ and } \quad
\Omega^2_0 = \{ x \in \R^N : \bar{\chi} (0,x) = -1 \} \; .
$$
The method developed in \cite{bs98} consists in proving the following propositions. 
%-----------
\begin{prop}[Initial time] \label{prop:omega01}
The set  $\{ x \in \R^N : d_0 (x) > 0 \}$ is contained in $\Omega^1_0$. 
Similarly, the set $\{ x \in \R^N : d_0 (x) < 0 \}$ is contained in $\Omega^2_0$. 
\end{prop}
%---------
\begin{prop}[Propagation] \label{prop:gen-flow}
The set $\Omega^1$ (resp. $\overline{\Omega^2}$) defined above
 is a generalized super-flow (resp. sub-flow) of \eqref{eq:amcm}.
\end{prop}
%----------
The proofs of both propositions are postponed.
Applying next \cite[Corollary~2.1]{bd03} (see also \cite[Corollary~3.1]{bs98}),
we conclude the proof of Theorem~\ref{thm:rd>1-convergence}.
\end{proof}
It remains to prove Propositions~\ref{prop:omega01} and
\ref{prop:gen-flow}.  Both rely on the construction of barriers for
smooth fronts; in our case, the term ``barrier'' refers to a sub- or
super-solution of the fractional diffusion-reaction equation. 

\subsection{Proofs of Propositions~\ref{prop:omega01} and \ref{prop:gen-flow}}

As we shall see it, proofs of Propositions~\ref{prop:omega01} and
\ref{prop:gen-flow} reduce to the proof of the following one.
%------------
\begin{prop}[Construction of a barrier]\label{prop:barrier}
  Given $t_0>0$ and $x_0 \in \R^N$, consider a smooth function $\phi :
  (0,+\infty) \times \R^N \to \R$ such that \ref{0}, \ref{i}, \ref{ii}
  from Definition~\ref{def:gen-flow} are satisfied.  For all $\beta
  >0$, there exists a sub-solution $U^{\eps,\beta}$ of \eqref{eq:rd>1} if
  $\alpha>1$ and \eqref{eq:rd=1} if $\alpha =1$ such that
\begin{equation}\label{eq:clU}
U^{\eps,\beta} (t_0, x) \le (m_+(-\beta \eta)- \beta \eta) \un_{\{ d(t_0,\cdot) \ge \beta \}}
+ m_-(-\beta \eta) \un_{\{  d(t_0,\cdot ) < \beta \}}  \text{ for } x \in \R^N 
\end{equation}
where $d(t,x)$ denotes the signed distance to the set $\{y :\phi (s,y)=0\}$ which has the same
signs as $\phi$ and $m_\pm(-\beta \eta)$ are the stable equilibria of $f + \beta\eta$. Moreover,
if $d(t,x) > -2\beta$, then
\begin{equation}\label{eq:limfracU}
\liminf{}_* \frac{U^{\eps,\beta} -\bar{m}_+}\eta (t,x)\ge -(C_f +2)\beta 
\end{equation}
where $C_f$ appears in \eqref{eq:m-eps-bar}. 
\end{prop}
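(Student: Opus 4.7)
The plan is to construct $U^{\eps,\beta}$ from the anisotropic traveling wave $q(\cdot,\cdot,-\beta\eta)$ provided by Assumption~\ref{assum:atw} with $h=-\beta\eta$, composed with the signed distance function $d(t,x)$ to the level set $\{\phi=0\}$ and corrected by the function $Q$ of Assumption~\ref{assum:Q}. The distance $d$ is smooth in a tubular neighborhood of the front by the non-degeneracy condition~\ref{ii}, and I would extend it smoothly outside. Writing $e(t,x)=\widehat{Dd}(t,x)$, my candidate near the front is
\[
q\!\left(\xi_\eps(t,x),\,e(t,x),\,-\beta\eta\right),\qquad \xi_\eps(t,x)=\frac{d(t,x)-\beta}{\eps}+Q\!\Big(\tfrac{d(t,x)-\beta}{\eps},\,e(t,x),\,t,x\Big),
\]
glued to the stable values $m_+(-\beta\eta)-\beta\eta$ in $\{d\gg\beta\}$ and $m_-(-\beta\eta)$ in $\{d\ll\beta\}$ via a suitable cutoff. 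The shift by $\beta$ in the numerator places the transition layer over $\{d(t_0,\cdot)=\beta\}$, so that the pointwise initial bound \eqref{eq:clU} follows directly from the monotonicity of $q$ in its first argument and its limits $m_\pm(-\beta\eta)$ at $\pm\infty$.

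Substituting into \eqref{eq:rd}, the time derivative contributes $\eps^{-1}\dot q\,\partial_t d$ plus bounded terms involving $\partial_t e$ and $\partial_t Q$, while Taylor expanding $d(t,x+\eps z)=d(t,x)+\eps\,Dd\cdot z+\tfrac{\eps^2}{2}z^{\top}D^2 d\,z+O(\eps^3|z|^3)$ in the nonlocal integrand reproduces $\mathcal I_{e}[q](\xi_\eps)$ at leading order. The traveling wave equation \eqref{eq:tw-rd} then cancels $-\mathcal I_e[q]+f(q)$ against $c\dot q+\beta\eta$, and the $O(\eta)$ residual between the true integrand and $q(\xi_\eps+e\cdot z,e)$ is, by construction, $\eta\,a(\xi_\eps,e,t,x)$ of Subsection~\ref{subsec:ltwe}. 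The corrector equation $\mathcal L Q=P_{\dot q}a$ from Assumption~\ref{assum:Q} eliminates the component of $a$ orthogonal to $\dot q$; the averaging result of Section~\ref{sec:lem} then identifies the $\dot q$-projection of $a$, divided by $\|\dot q\|_{L^2}^2$, with the curvature quantity $\mu(e)\,\mathrm{Tr}(\tilde A(e)\,D^2 d)$ via the Green-Kubo formulas \eqref{eq:mobility-rd}, \eqref{eq:matrix-rd>1}, \eqref{eq:matrix-rd=1}.

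Collecting the pieces and using $c(e,-\beta\eta)/\eta\to -\beta\,\overline c(e)$ from \eqref{eq:cbar}, the subsolution inequality reduces, after dividing by the positive factor $\dot q/\eps$, to a scaled form of
\[
\partial_t d+F^*(Dd,D^2 d)+\beta+o_\eps(1)\le 0,
\]
and the strict speed condition~\ref{i} for $\phi$ (which, after normalizing by $|D\phi|>0$ from~\ref{ii}, gives $\partial_t d+F^*(Dd,D^2 d)\le -\delta<0$) yields an upper bound of $-\delta-\beta+o(1)<0$ for $\eps$ small. The lim inf estimate \eqref{eq:limfracU} follows from the decay \eqref{eq:q-decay}, which yields $|q(\xi_\eps)-m_+(-\beta\eta)|=O(|\xi_\eps|^{-(1+\alpha)})$ in the region $\{d>-2\beta\}$ once the thin transition layer near $\{d=\beta\}$ is excluded, combined with the bound $|m_+(-\beta\eta)-\bar m_+|\le C_f\beta\eta$ from \eqref{eq:m-eps-bar}; the constant $C_f+2$ absorbs the sum of these two contributions and the subtracted $\beta\eta$.

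The main obstacle I anticipate is controlling the tail $|z|\to\infty$ of the nonlocal operator: in the singular case $J$ decays only as $|z|^{-N-\alpha}$, so the contribution of the region where $\eps|z|$ is macroscopic cannot simply be discarded, and the naive Taylor expansion of $d(t,x+\eps z)$ fails there. Bounding and averaging this tail is precisely what Section~\ref{sec:lem} carries out; in the critical case $\alpha=1$, the $|\ln\eps|$ factor built into $\eta$ is exactly what keeps the averaged tail contribution of unit order and produces the formula \eqref{eq:matrix-rd=1} for the diffusion matrix.
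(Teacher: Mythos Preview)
Your overall strategy is right, but two genuine gaps remain.

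First, the ansatz. The paper uses the additive form
\[
U(t,x)=q\Big(\tfrac{d(t,x)-2\beta}{\eps},\,\mathcal D(t,x),-\beta\eta\Big)+\eta\,Q\Big(\tfrac{d(t,x)-2\beta}{\eps},\,\mathcal D(t,x),t,x,-\beta\eta\Big)-2\beta\eta,
\]
whereas you insert $Q$ inside the argument of $q$, and without a factor $\eta$. The operator $\mathcal L$ of Assumption~\ref{assum:Q} is the linearization of the traveling-wave equation around $q$ acting on an \emph{additive} perturbation; plugging $q+\eta Q$ into \eqref{eq:rd} produces $\mathcal L Q$ at the next order, and this is what cancels $P_{\dot q}a_\eps$. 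With your ansatz $q(\xi+Q)$ the first-order correction is $\dot q\,Q$, not $Q$, so the equation $\mathcal L Q=P_{\dot q}a$ no longer gives the cancellation you claim. The shift by $2\beta$ (rather than $\beta$) and the constant $-2\beta\eta$ are also not cosmetic: the latter generates the term $-2f'(q)\beta$ which, paired with $-\dot q\,\delta_\phi/8$, yields the sign estimate \eqref{estim:additional-error-term} needed to absorb the residual error; your reduction ``after dividing by the positive factor $\dot q/\eps$'' ignores precisely these terms that carry no $\dot q$.

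Second, and more seriously, you underestimate the extension away from the front. Writing ``glued to the stable values via a suitable cutoff'' hides a real difficulty: because the equation is nonlocal, modifying $U$ far from $\{d=0\}$ alters $\mathcal I^\eps_\alpha U$ \emph{on} the front, and conversely the near-front profile contributes to the equation far away. The paper proceeds in two stages, first $\bar U=\max(U,m_-)$ and then $U^{\eps,\beta}=\psi(d)\,\bar U+(1-\psi(d))(m_+-\beta\eta)$, and verifying the subsolution property in the interpolation zone $\{\gamma/2\le d\le 3\gamma/4\}$ requires the separate Lemma~\ref{lem:ultime}, an estimate of $\psi_d\,\mathcal I^\eps_\alpha\bar U-\mathcal I^\eps_\alpha U^{\eps,\beta}$ that exploits the fact that $\bar U$ is already within $O(\eta)$ of $m_+$ there. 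This is a different issue from the one treated in Section~\ref{sec:lem}: Lemma~\ref{lem:a-eps-conv} concerns the limit of the averaged oscillating term $\bar a_\eps$ \emph{at} the front and says nothing about the global gluing. Your last paragraph conflates the two.
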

%----------

We first derive Proposition~\ref{prop:gen-flow} from the construction of the barrier.
%-----------
\begin{proof}[Proof of Proposition~\ref{prop:gen-flow}]
  Let us explain why Proposition~\ref{prop:barrier} together with the
  comparison principle for \eqref{eq:rd} yield the desired result.  

We consider a smooth function $\phi$ such that \ref{0}-\ref{iii} hold
true. Let $d$ denote the signed distance function to $\{ \phi = 0 \}$. 
We  derive from \ref{iii} that 
$$
\{ d(t_0,\cdot) \ge 0 \} \subset \{ \phi (t_0,\cdot) \ge 0 \} \subset \Omega^1_{t_0}
= \{ \liminf{}_* \frac{u^\eps-\bar{m}_+}\eta \ge 0 \} \, .
$$
For the sake of clarity, $m_\pm$ denotes $m_\pm(-\beta \eta)$. 
And we will do so in the remaining of the paper. 
Hence
$$
\{ d(t_0,\cdot) \ge \beta \} \subset \mathrm{int} \{ \liminf{}_* \frac{u^\eps-\bar{m}_+}\eta \ge 0  \} \, .
$$
Hence by using \ref{0}, we conclude that $u^\eps$ satisfies on one hand
$$
u^\eps \ge \bar{m}_+-\beta \eta \ge m_+-\beta \eta \quad \text{ in } \{ d(t_0,\cdot) \ge \beta \} \, .
$$
On the other hand, since $\bar{m}_-$ is a trivial solution of the diffusion-reaction equation,
we have $u^\eps \ge \bar{m}_- \ge m_-$. We thus conclude that $u^\eps$ satisfies
\begin{equation}\label{eq:clphieps}
u^\eps (t_0, x) \ge (m_+- \beta \eta) \un_{\{ d (t_0,\cdot) \ge \beta \}}
+ m_- \un_{\{ d(t_0,\cdot) < \beta \}}  \text{ for } x \in \R^N \, , 
\end{equation}
We now use Proposition~\ref{prop:barrier} in order to get a
sub-solution $U^\eps$ with the desired properties.  Combining
\eqref{eq:clU} and \eqref{eq:clphieps} yields that $U^\eps \le u^\eps$
at $t=t_0$.  We thus conclude by using the comparison principle for
\eqref{eq:rd} that $U^\eps \le u^\eps$ on $[t_0,t_0+h] \times \R^N$
and \eqref{eq:limfracU} implies that
$$
\liminf{}_*  \frac{u^\eps (t_0+h,x)-\bar{m}_+}\eta \ge -(C_f+2) \beta
$$
as soon as  $d(t_0+h,x) > 2\beta$. Since $\beta$ is arbitrary, the proof is complete. 
\end{proof}
%-----------
We now prove Proposition~\ref{prop:omega01}.
%----------
\begin{proof}[Proof of Proposition~\ref{prop:omega01}]
We only prove the result for $\Omega^1_0$ since the proof for $\Omega^2_0$ is similar. 

Let $x_0$ be such that $d_0 (x_0)=:2 \delta > 0$. We have to prove that $x_0 \in \Omega^1_0$.
In other words, for all $(t,x)$ in a neighbourhood  of $(0,x_0)$, we would like to prove
$$
\liminf{}_* \frac{u^\eps - \bar{m}_+}\eta (t,x) \ge  0 \,.
$$
In order to get such a result, we construct for any small $\beta >0$ 
a subsolution $U^{\eps,\beta}$ of \eqref{eq:rd} such that
$$
U^{\eps,\beta} (0,x) \le u^\eps (0,x)  
$$ 
and satisfying \eqref{eq:limfracU} for some function $d(t,x)$ such that
$\{ d > 2 \beta \}$ contains a neighbourhood of $(0,x_0)$. 

There exists $r>0$ such that for any $x \in \bar{B} (x_0,r)$, $d_0 (x)
\ge \delta > 0$.  Consider next the smooth function
$$
\phi (t,x) = (r-Ct)_+^2 - |x-x_0|^2 \, .
$$
The associated distance function is given by the following formula
$$
d (t,x) = r - C t - |x-x_0| \, .
$$ 
Remark that $\{ d > 2 \beta \} = \cup_{t\ge 0} \{t \} \times B(x_0,r-Ct-2\beta)$.

We claim that  \eqref{eq:clphieps} holds true with $t_0=0$.
Indeed, when $d (0,x) \ge \beta$, we know that $d_0 (x) \ge \delta$ and
this  implies that $u^\eps (0,x) \ge m_+ - \beta \eta$ for $\eps$ small enough
as showed now
\begin{eqnarray*}
u^\eps (0,x) &\ge& q^0\left( \frac{d_0(x)}\eps, Dd_0 (x) \right) \\
& \ge &q^0 \left( \frac{\delta}\eps, Dd_0 (x) \right) \\
& \ge &\bar{m}_+ + o (\eps^{1+\alpha})  \\
& \ge &m_+ + o (\eps^{1+\alpha})  \\
& \ge &m_+ - \beta \eta \, .
\end{eqnarray*} 

Notice that \ref{0} and \ref{ii} are satisfied. As far as \ref{i} is
concerned, it is only used in the construction of the barrier in order
to get \eqref{eq:d} below. We thus have to prove that we can choose $C
>0$ such that \eqref{eq:d} also holds true.  The constant $C$ is
chosen as follows
$$
C \ge \sup_{e \in \S} \left( - \mu (e) \mathrm{Tr} (A(e) ) - \bar{c} (e) \right) + \frac{\delta_\phi}2 
$$
and \eqref{eq:d} holds true for $\gamma$ and $h$ small enough. 
\end{proof}
%------------

\section{Construction of the barrier}

\label{sec:construction}

This section is devoted to the proof of Proposition~\ref{prop:barrier}. 
\begin{proof}[Proof of Proposition~\ref{prop:barrier}]
The proof proceeds in several steps. We first construct a sub-solution $U$ of
the diffusion-reaction equation close to the smooth front and we then extend
it to the whole space. 

\paragraph{A barrier close to the front.}
Using \ref{0}, \ref{i} and \ref{ii}, we know that
there exists $\gamma >0$ such that $d$ is smooth on the set 
$$
Q_\gamma = \{ |d| < \gamma \} 
$$
and $D\phi (s,x) \neq 0$ on $Q_\gamma$ and 
\begin{equation}\label{eq:d}
\partial_t d \le \mu (Dd) \mathrm{Tr} (A(Dd) D^2d) - \frac{\delta_\phi}{2}  \quad \text{ in } Q_\gamma \, .
\end{equation}
We used the fact that $|Dd|=1$ in $Q_\gamma$ which also implies that
$D^2 d Dd =0$ in $Q_\gamma$.  For $\beta \le \gamma /2$, we next
define a ``barrier'' as
\begin{multline*}
U (t,x) = q \left( \frac{d(t,x)-2 \beta}\eps , \D (t,x),-\beta h \right) 
\\+ h Q \left( \frac{d(t,x)-2\beta}\eps ,\D(t,x), t,x,-\beta h \right) - 2 \beta h
\end{multline*}
where $h >0$ will be chosen later, $q$ denotes the traveling wave
given by Assumption~\ref{assum:atw}; the function $\D$ is assumed to
be smooth, to coincide with $Dd$ in $Q_\gamma$ and to be such that
$\frac12\le |\D|\le \frac32$.  Let us point out that we would like to
choose $\D=Dd$ but this function is not well defined everywhere away
from the front and even if we  prove that $U$ is a subsolution
close to the front, $U$ has to be defined everywhere since the
diffusion-reaction equation is not local. As far as the function $Q$
is concerned, it will be chosen later.

\paragraph{Plugging the barrier into the diffusion-reaction equation.}
In order to prove that the barrier we introduced in the previous
step is a sub-solution of the diffusion-reaction equation close to
the front, we first plug it into the equation. 
%----------
\begin{lem}
If $h = \eta$ and $\beta \le \bar \beta$ (depending only on $\delta_\phi$), then
the function $U$ satisfies the following inequality in $Q_\gamma$
\begin{multline}
\label{eq:U2}
\partial_t U + \frac{1}{\eps \eta}\{ - \mathcal{I}^\eps_\alpha U +  f (U) \} 
\le \frac{\dot{q}(r,e)}\eps \bigg[ \mu (e) \mathrm{Tr} (A(e) B) -\mu_\eps \bar{a}_\eps 
   - \frac{\delta_\phi}4 \bigg] - \frac\beta\eps  \\ 
+ \frac1\eps \left[ \mathcal{L} Q (r,e,t,x)- (a_\eps (r,e,t,x)- \dot{q} (r,e) \mu_\eps \bar{a}_\eps ) \right] 
+ (\mathrm{err})
\end{multline}
where $r = \frac{d(t,x)-2\beta}\eps$, $e = Dd (t,x)$, $B=D^2 d(t,x)$, $\mu_\eps,\bar{a}_\eps \in \R$ are 
two real numbers to be chosen later and
\begin{equation}\label{aeps}
  a_\eps (r,e,t,x) = \frac1\eta \int \bigg\{  q ( r + \frac{d(t,x+\eps z) - d(t,x)}\eps, e)
  -  q ( r + e \cdot z, e)  \bigg\} J(z) dz \, .
\end{equation}
As far as error terms are concerned, we have
\begin{eqnarray}
  (\mathrm{err}) =  \frac1{\eps \eta} R [T_q] + \frac1\eps R[T_Q] 
  + \frac1\eps \left(- \dot{q} \frac{\delta_\phi}8  
    - 2 f'(q) \beta  \right)  + o(\eps^{-1}) 
\label{eq:error-terms}
\end{eqnarray}
and 
\begin{multline} \label{rtq}
R[T_q]  = \int \bigg\{  q \left( \frac{d (t, x+ \eps z)-2\beta}\eps, e \right) \\
- q \left( \frac{d (t, x+ \eps z)-2\beta}\eps, \D (t,x+\eps z) \right) 
+  B D_e q (r,e) \cdot \eps z \un_B (\eps z) \bigg\} J(z) dz, 
\end{multline}
\begin{multline} \label{rtQ}
R[T_Q] =  \int \bigg\{ Q(r+ e \cdot z, e,t,x) \\
- Q \left( \frac{d(t,x+\eps z)-2\beta}\eps ,
\D(t,x+\eps z) ,t,x+\eps z \right)  \\
 +  (B D_e Q + D_x Q) \cdot \eps z \un_B(\eps z) \bigg\} J(z) dz \, .
\end{multline}
\end{lem}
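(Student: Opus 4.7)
The identity \eqref{eq:U2} is obtained by direct substitution of $U$ into $\partial_t U + (\eps\eta)^{-1}\{-\mathcal I^\eps_\alpha U + f(U)\}$ and rewriting the outcome with the traveling wave equation \eqref{eq:tw-rd} and the linearized equation \eqref{eq:linearized-rd}. I set $r = (d(t,x) - 2\beta)/\eps$ and $e = \mathcal D(t,x)$. On $Q_\gamma$ we have $\mathcal D = Dd$, $|Dd|=1$ and $D^2d\, Dd = 0$, so $D\mathcal D = B = D^2 d$ there; this is the source of the first-order Taylor corrections $BD_e q\cdot \eps z\un_B$ and $(BD_eQ+D_xQ)\cdot \eps z\un_B$ in \eqref{rtq}-\eqref{rtQ}. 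The computation splits into the time derivative, the nonlocal operator, and the reaction.

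\textbf{Expansions of the three pieces.} By the chain rule, $\partial_t U = (\dot q + h\dot Q)\partial_t d/\eps + h\partial_t Q + (D_e q + hD_e Q)\cdot \partial_t \mathcal D$; via \eqref{eq:q-estim}, \eqref{eq:Q-estim} and smoothness of $d$ on $Q_\gamma$, the last three pieces are uniformly bounded, hence sit in $o(\eps^{-1})$. A Taylor expansion of $f$ around $q$, using $U-q = hQ - 2\beta h$ and $h = \eta$, gives
\begin{equation*}
\frac{f(U)}{\eps\eta} = \frac{f(q)}{\eps\eta} + \frac{f'(q)Q}{\eps} - \frac{2\beta f'(q)}{\eps} + o(\eps^{-1}).
\end{equation*}
For the nonlocal operator, I add and subtract inside the integrand $U(t,x+\eps z) - U(t,x) - \eps DU\cdot z\un_B(\eps z)$ the frozen-direction quantity $q(r+e\cdot z,e) - q(r,e) - \dot q(r,e)(e\cdot z)\un_B(\eps z)$, together with the analogous expression for $Q$ augmented by the first-order correction $\eps(BD_eQ + D_xQ)\cdot z\un_B(\eps z)$ needed to keep the integrand integrable near the origin. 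Integrating against $J$, the frozen pieces reproduce $\mathcal I_e[q](r)$ and $h\mathcal I_e[Q](r,e,t,x)$ (the $\un_B(z)$-vs-$\un_B(\eps z)$ mismatch is the integral of the odd function $(e\cdot z)J(z)$ over a symmetric shell, hence vanishes by \eqref{cond1:J}); the $r$-argument discrepancy for $q$ produces exactly $\eta a_\eps(r,e,t,x)$ per the definition \eqref{aeps}; and the residual direction- and $x$-freeze discrepancies become $-R[T_q]$ and $-R[T_Q]$. The $a_\eps$-analogue for the corrector $Q$ carries a factor $h$ and therefore contributes to the operator at order $h^2/(\eps\eta) = \eta/\eps = o(\eps^{-1})$, so it is absorbed.

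\textbf{Reassembly via the traveling wave and linearized equations.} Substituting \eqref{eq:tw-rd} with $h = -\beta\eta$ in the form $\mathcal I_e[q]/(\eps\eta) = \dot q\, c(e,-\beta\eta)/(\eps\eta) + f(q)/(\eps\eta) + \beta/\eps$ cancels $f(q)/(\eps\eta)$ between the reaction and the nonlocal term, leaves a $-\beta/\eps$ remainder, and combines with $\dot q\partial_t d/\eps$ into $(\dot q/\eps)[\partial_t d - c(e,-\beta\eta)]$. Using \eqref{eq:cbar} (so that $c(e,-\beta\eta) = -\beta\eta\,\bar c(e) + o(\beta\eta)$) and the geometric inequality \eqref{eq:d}, this is bounded by $(\dot q/\eps)[\mu(e)\mathrm{Tr}(A(e)B) - \delta_\phi/2 + O(\beta\eta)]$. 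Adding and subtracting $\dot q\mu_\eps\bar a_\eps/\eps$ isolates the bracket $[\mu(e)\mathrm{Tr}(A(e)B) - \mu_\eps\bar a_\eps]$ of \eqref{eq:U2}, while the leftover $-\mathcal I_e[Q]/\eps + f'(q)Q/\eps - a_\eps/\eps + \dot q\mu_\eps\bar a_\eps/\eps$ reassembles, via \eqref{eq:linearized-rd}, into $[\mathcal L Q - (a_\eps - \dot q\mu_\eps\bar a_\eps)]/\eps$. Splitting $-\delta_\phi/2 = -\delta_\phi/4 - \delta_\phi/8$ sends the second half to $(\mathrm{err})$ as the $-\dot q\delta_\phi/(8\eps)$ piece, and for $\beta \le \bar\beta$ small enough (depending only on $\delta_\phi$) the $O(\beta\eta)$ residue is absorbed into that $-\delta_\phi/8$ margin.

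\textbf{Main obstacle.} The delicate point throughout is ensuring that every add/subtract manipulation leaves an integrand genuinely integrable against the singular potential $J$, both at the origin where $J\sim |z|^{-N-\alpha}$ and at infinity for the unbounded support. This forces the specific Taylor compensations that appear in $R[T_q]$ and $R[T_Q]$, explains why the cutoff $\un_B(\eps z)$ (rather than $\un_B(z)$) shows up in those compensations, and makes the evenness hypothesis on $J$ essential for the shell-integral cancellations described above. Tracking signs across all the decompositions and controlling the many lower-order $O(\eps)$ and $o(\eps^{-1})$ remainders is tedious but mechanical once this integrability-driven decomposition is set up.
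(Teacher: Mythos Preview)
Your proposal is correct and follows essentially the same route as the paper: direct substitution of $U$, Taylor expansion of $f$, decomposition of $\mathcal I^\eps_\alpha U$ into the frozen-direction pieces $\mathcal I_e[q]$ and $\mathcal I_e[Q]$ plus the oscillating term $\eta a_\eps$ plus the residuals $R[T_q]$, $R[T_Q]$, and then reassembly via \eqref{eq:tw-rd}, \eqref{eq:linearized-rd} and the geometric inequality \eqref{eq:d}. Two small bookkeeping points to tighten: the split is $-\delta_\phi/2 = -\delta_\phi/4 - \delta_\phi/8 - \delta_\phi/8$ (three pieces---the third $-\delta_\phi/8$ is what absorbs the $-c(e,-\beta\eta)/\eta = O(\beta)$ contribution, and this is precisely where $\bar\beta$ depends on $\delta_\phi$); and when you rewrite $-\mathcal I_e[Q] + f'(q)Q$ as $\mathcal L Q$ you pick up an extra $-c\dot Q$, which is $O(\beta)$ by \eqref{eq:cbar} and hence $o(\eps^{-1})$.
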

%---------
\begin{proof}
We compute the quantity $\partial_t U + (\eps \eta)^{-1} \{ - \mathcal{I}^\eps_\alpha U  + f(U) \}$.

The fact that $q$ is a traveling wave (see \eqref{eq:tw-rd}) together with a
uniform bound on $Q$ with respect to all its variables (we will choose
$Q$ below so that it satisfies such a condition) permits to get
\begin{eqnarray*}
f(U) &=& f(q) + h f'(q) Q - 2 f'(q) \beta h  + O (h^2) \\
& = & -\beta h + \mathcal{I}_e [q] - c (e,-\beta h) \dot{q} +  h f'(q) Q 
- 2 f'(q) \beta h + O (h^2)  \, .
\end{eqnarray*}
Rearranging terms, we thus obtain, for $(t,x) \in Q_\gamma$,
\begin{multline} \label{eq:U} \frac{1}{\eps \eta}\{ -
  \mathcal{I}^\eps_\alpha U + f (U) \} \le \frac{\dot{q}}\eps \left[ -
    \frac{c(e,-\beta h)}\eta (0,D d) \right] - \frac{\beta h}{\eps
    \eta} - 2 f'(q) \beta \frac{h}{\eps \eta} \\ 
+ \frac1{\eps \eta}(\mathcal{I}_e[q] +hf'(q) Q- \mathcal{I}^\eps_\alpha U)
+ O (\frac{h^2}{\eps \eta})
\end{multline}
We immediately see from this computation and in view of \eqref{eq:cbar}
that $h$ must be chosen as follows
$$
h = \eta \, .
$$

We next write 
\begin{equation}\label{eq:integral terms}
\mathcal{I}_e[q](r,e) +\eta f'(q)Q- \mathcal{I}^\eps_\alpha U (t,x)= \eta T_Q+T_q 
\end{equation}
where
\begin{equation}
\label{eq:B}
 T_Q =  f'\left( q \right) Q  - \mathcal{I}^\eps_\alpha ( Q(\eps^{-1}(d-2\beta),Dd,t,\cdot))
= \mathcal{L} Q -c \dot{Q} + R [T_Q] 
\end{equation}
and
\begin{eqnarray*}
 T_q &=& \mathcal{I}_e [q](r) - \mathcal{I}^\eps_\alpha [q(\eps^{-1} (d-2\beta), \D)](x) \\
& =&   \int \bigg\{  q ( r + e
  \cdot z, e) - q  (r, e) - \dot{q} (r,e) e \cdot z \un_B (z)\bigg\} J(z) dz  \\
& &  -  \int \bigg\{  
 q \left( \frac{d (t, x+ \eps z)-2\beta}\eps, \D (t,x+\eps z) \right)
- q (r,e) \\ && \qquad \quad - \left( \dot{q} (r,e) \frac{e}\eps + B D_e
  q (r,e) \right) \cdot \eps z \un_B (\eps z) \bigg\} J(z) dz \, .
\end{eqnarray*}
Hence, $T_q$ can be written as follows
\begin{equation}\label{eq:A}
T_q =   - \eta a_\eps + R[T_q] \, .
\end{equation}

We now compute the time derivative of the barrier $U$. We use \eqref{eq:d} in order to get
\begin{multline}\label{eq:timederU}
\partial_t U =  \frac{\dot{q}}\eps \left[ \partial_t d \right] 
+ (D_e q + \eta D_e Q) \cdot D (\partial_t d) +  \eta \eps^{-1} \dot{Q} \partial_t d + \eta
\partial_t Q \\
\le \frac{\dot{q}}\eps \left[ \mu (e) \mathrm{Tr} (A(e) B) - \frac{\delta_\phi}2 \right] 
+ (D_e q + \eta D_e Q) \cdot D (\partial_t d) +  \eta \eps^{-1} \dot{Q} \partial_t d + \eta
\partial_t Q .
\end{multline}

We next combine  \eqref{eq:U}, \eqref{eq:integral
  terms}, \eqref{eq:B}, \eqref{eq:A} and \eqref{eq:timederU} to get
\begin{multline*}
  \partial_t U + \frac{1}{\eps \eta}\{ - \mathcal{I}^\eps_\alpha U + f
  (U) \} \le \frac{\dot{q}}\eps \left[ \mu (e) \mathrm{Tr} (A(e) B) - \frac{\delta_\phi}4
-   \frac{c(e,-\beta \eta)}\eta - \frac{\delta_\phi}8 \right]-
  \frac\beta\eps \\ + \frac1\eps \left[ - a_\eps + \mathcal{L} Q
  \right] + (\mathrm{err})
\end{multline*}
with 
\begin{multline*}
(\mathrm{err})=
\frac1{\eps \eta} R [T_q] + \frac1\eps R[T_Q] - \frac{c}\eta \dot{Q} 
+ \frac1\eps \left(- \dot{q} \frac{\delta_\phi}8  - 2 f'(q) \beta  \right) \\
 + (D_e q + \eta D_e Q) \cdot D (\partial_t 
d) +  \eta \eps^{-1} \dot{Q} \partial_t d + \eta \partial_t Q + o(\eps^{-1}) \, .
\end{multline*}
Using \eqref{eq:cbar}, \eqref{eq:q-estim} and \eqref{eq:Q-estim}, we finally get \eqref{eq:U2} with the
associated error term. 
\end{proof}

\paragraph{Estimating  error terms.}
In this paragraph, we prove that the right hand side of \eqref{eq:U2} is non-positive. 
We first construct a corrector $Q$ in order to handle oscillating terms.
%-------------------
\begin{lem}[Choice of the corrector $Q$]\label{lem:corrector}
There exist $\mu_\eps,\bar{a}_\eps \in \R$ such that there exists $Q$ satisfying 
\begin{equation} \label{cellQ}
\mathcal{L} Q  = a_\eps  - \dot{q} \mu_\eps  \bar{a}_\eps  \; . 
\end{equation}
\end{lem}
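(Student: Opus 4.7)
My plan is to apply the Fredholm-type alternative provided by Assumption~\ref{assum:Q}. Since $\mathrm{Ker}\,\mathcal{L} = \mathrm{Ker}\,\mathcal{L}^* = \R \dot{q}$, the equation $\mathcal{L} Q = g$ is solvable (via the continuous solution produced by the second part of that assumption) whenever $g$ is $L^2(\R)$-orthogonal to $\dot{q}(\cdot,e)$ in the $r$-variable. I will therefore choose the constants $\mu_\eps$ and $\bar{a}_\eps$ precisely so that the right-hand side of \eqref{cellQ} coincides with the projection $P_{\dot{q}} a_\eps(\cdot,e,t,x)$, and the desired corrector $Q$ will then be the one delivered by Assumption~\ref{assum:Q}.

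Concretely, testing the candidate right-hand side $a_\eps - \dot{q}\,\mu_\eps \bar{a}_\eps$ against $\dot{q}$ in $L^2(\R)$, the orthogonality condition
$$
\int_\R \bigl( a_\eps(\xi,e,t,x) - \dot{q}(\xi,e)\,\mu_\eps \bar{a}_\eps \bigr)\,\dot{q}(\xi,e)\,d\xi = 0
$$
reduces to the scalar identity $\mu_\eps \bar{a}_\eps \int_\R \dot{q}^2\,d\xi = \int_\R a_\eps\,\dot{q}\,d\xi$. I will therefore set
$$
\mu_\eps := \Bigl(\int_\R \dot{q}^2(\xi,e,-\beta\eta)\,d\xi\Bigr)^{-1}, \qquad \bar{a}_\eps := \int_\R a_\eps(\xi,e,t,x)\,\dot{q}(\xi,e,-\beta\eta)\,d\xi.
$$
This splitting is natural in view of the Green--Kubo formula \eqref{eq:mobility-rd}, and, thanks to \eqref{eq:dotq-l2}, it will give $\mu_\eps \to \mu(e)$ as $\eps \to 0$, which is exactly what is needed later in order to match the geometric non-linearity \eqref{def:F} in the limit. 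With these choices the right-hand side of \eqref{cellQ} equals $P_{\dot{q}} a_\eps(\cdot,e,t,x)$, and Assumption~\ref{assum:Q} then provides a continuous $Q$ satisfying the estimates \eqref{eq:Q-estim} and the decay \eqref{eq:Q-decay}.

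The only real point of care is to verify that the two scalar products involved are well-defined. For $\int_\R \dot{q}^2\,d\xi$ this follows from \eqref{eq:q-decay}, which ensures $\dot{q}\in L^2(\R)$ uniformly in $e$ and $h$. For $\int_\R a_\eps\,\dot{q}\,d\xi$, one combines the decay of $\dot{q}$ with a bound on $|a_\eps(\xi,e,t,x)|$ for $|\xi|$ large, obtained by expanding the integrand in the defining formula \eqref{aeps} and exploiting the tail estimate \eqref{eq:q-decay} on $q$ together with the kernel bound \eqref{cond1:J} on $J$. Beyond this bookkeeping, I do not anticipate a serious obstacle: the lemma is essentially a direct packaging of the Fredholm alternative with the existence statement built into Assumption~\ref{assum:Q}, and the only nontrivial content is fixing the right scalar normalization of $\mu_\eps$ so that it converges to the mobility.
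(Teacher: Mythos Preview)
Your proposal is correct and follows essentially the same route as the paper: impose the orthogonality condition $\int (a_\eps - \mu_\eps \bar a_\eps \dot q)\dot q\,d\xi=0$, set $\mu_\eps=(\int \dot q^2)^{-1}$ and $\bar a_\eps=\int a_\eps \dot q\,d\xi$, and invoke Assumption~\ref{assum:Q} to obtain $Q$. Your additional remarks on the integrability of $\dot q^2$ and $a_\eps \dot q$ and on $\mu_\eps\to\mu(e)$ are useful clarifications that the paper leaves implicit.
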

%----------------
\begin{proof}
  In view of Proposition~\ref{assum:Q}, it is enough to choose
  $\mu_\eps$ and $\bar{a}_\eps$ such that
$$
\int (a_\eps (\xi) - \mu_\eps \bar{a}_\eps \dot{q} (\xi) ) \dot{q} (\xi) d\xi =0. 
$$
The following choices permit to ensure such a condition
$$
\mu_\eps  (e) = \left( \int \dot{q}^2 (\xi,e)d\xi \right)^{-1} 
$$
and
\begin{multline*}
\bar{a}_\eps (e,t,x)  =  \int \dot{q} (\xi,e) a_\eps (\xi,e,t,x) d\xi \\
= \frac1\eta \iint \dot{q} (\xi,e) \bigg\{  
q ( \xi + e \cdot z + \eps W (t,x,z), e) 
-  q ( \xi + e \cdot z, e)  \bigg\} J(z) d\xi dz 
\end{multline*}
with 
$$
W(t,x,z) = \frac{1}{\eps^2} [ d(t,x+\eps z) -d(t,x) - \eps Dd (t,x) \cdot z] \, .
$$
\end{proof}
%-----------
\begin{rem} \label{rem:choice-h}
The choice of $h$ when rescaling fractional diffusion-reaction equations \eqref{eq:rd}
is made such that $\bar{a}_\eps$ has a limit as $\eps \to 0$. 
\end{rem}
%-----------
The following lemma
is the core of the proof of Theorem~\ref{thm:rd>1-convergence} and its proof
is rather involved. This is the reason why we postpone it until Section~\ref{sec:lem}. 
%----------------------------------- 
\begin{lem}[Uniform convergence of approximate coefficients (I)] 
\label{lem:a-eps-conv}
As $\eps \to 0$,
$$
\bar{a}_\eps (e,t,x)   \to  \mathrm{tr} \; (A (e) D^2 d(t,x)) 
$$
and the limit is uniform with respect to $(e,t,x) \in \S \times  Q_\gamma$. 
\end{lem}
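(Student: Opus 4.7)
The plan is to expand $q(\xi+e\cdot z+\eps W)-q(\xi+e\cdot z)$ by the mean value theorem, Taylor-expand $\eps W$ using smoothness of $d$, and pass to the limit by dominated convergence after splitting the $z$-integration. First I would write
$$\bar a_\eps = \frac{\eps}{\eta}\iint \dot q(\xi)\, W(t,x,z) \int_0^1 \dot q(\xi+e\cdot z+s\eps W)\,ds\, J(z)\,d\xi\,dz,$$
and split the integration domain at $|\eps z|=\delta$. Smoothness of $d$ gives $\eps W(t,x,z)=\frac{\eps}{2} D^2d(t,x)z\cdot z+O(\eps^2|z|^3)$ on the inner region $|\eps z|\le\delta$, while the Lipschitz bound $|\eps W|\le C|z|$ holds globally. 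The outer piece is controlled using the tail decay of $J$ (either $|z|^{-N-\alpha}$ or compact support) together with $|\dot q|\le C_q$ and the decay estimate \eqref{eq:q-decay}, and vanishes in the limit $\eps\to 0$ followed by $\delta\to 0$.

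On the inner region the pointwise limit of the integrand is $\tfrac12\dot q(\xi)\dot q(\xi+e\cdot z) D^2d(t,x)z\cdot z\, J(z)$. For $\alpha>1$ (and in the regular case) one has $\eps/\eta=1$, and dominated convergence with an integrable majorant built from \eqref{eq:q-estim}--\eqref{eq:q-decay} gives the limit directly. To identify it with $\mathrm{tr}(A(e)D^2d)$ I would use $D^2d\cdot e=0$ (consequence of $|Dd|=1$ in $Q_\gamma$), decompose $z=z_1 e+z_\perp$, and change variable $u=z_\perp/|z_1|$ to factor out the $A_g(e)$ block defined in \eqref{def:ag}. The remaining scalar $\int_\R|z_1|^{1-\alpha}G(z_1)\,dz_1$, with $G(z_1)=\int\dot q(\xi)\dot q(\xi+z_1)\,d\xi$, is rewritten as $\frac{\alpha(\alpha-1)}{2}\int|z_1|^{-1-\alpha}F(z_1)\,dz_1$ via the identity $G=\tfrac12 F''$ (with $F(z_1)=\int(q(\xi+z_1)-q(\xi))^2d\xi$, obtained by two integrations by parts in $\xi$) followed by two more integrations by parts in $z_1$; the boundary terms vanish because $F(z_1)\sim z_1^2\int\dot q^2$ near zero and $F$ grows linearly at infinity against the weight decay. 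This reproduces \eqref{eq:matrix-rd>1}; the regular case is analogous and yields \eqref{eq:matrix-rd>1bis} without integration by parts.

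The main obstacle is the critical case $\alpha=1$, where $\eps/\eta=1/|\ln\eps|\to 0$ and the computation above naively yields zero. The correct limit comes from a logarithmic contribution on the mesoscale $1\ll|z|\ll\eps^{-1}$, on which the pointwise expansion $q(\xi+e\cdot z+\eps W)-q(\xi+e\cdot z)\approx\dot q\cdot\eps W$ breaks down. In this range $q$ has already reached its limits $\bar m_\pm$ at $\pm\infty$, and the integrand concentrates on the set where $e\cdot z$ and $e\cdot z+\eps W$ have opposite signs; a polar-coordinates computation there produces $(\bar m_+-\bar m_-)^2$ times an angular integral of $J$ restricted to $\mathbb{S}^{N-2}=\{\theta\in\S:e\cdot\theta=0\}$, multiplied by $|\ln\eps|$ from integrating $|z|^{-1}$ across the mesoscale, exactly cancelling the denominator in $\eta$ and producing \eqref{eq:matrix-rd=1}. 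Uniformity in $(e,t,x)$ is inherited throughout from the uniform bounds in Assumption~\ref{assum:atw} and the continuity of $D^2d$ on the compact closure of $Q_\gamma$.
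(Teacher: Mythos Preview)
Your sketch for $\alpha=1$ is essentially the paper's argument: after introducing the auxiliary function $F_q(a)=\int\dot q(\xi)q(\xi+a)\,d\xi$ the paper also passes to polar coordinates, isolates the region where $e\cdot z$ and $e\cdot z+\eps W$ have opposite signs, and extracts the $|\ln\eps|$ factor from $\int dr/r$ over the mesoscale. Your description of the regular case is also adequate, since for $J\in L^1\cap C_c$ the majorant $|z|^2J(z)$ is genuinely integrable.

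The gap is in the singular case $\alpha>1$. Your claim that ``dominated convergence with an integrable majorant built from \eqref{eq:q-estim}--\eqref{eq:q-decay} gives the limit directly'' fails. On the inner region $|z|\le\delta/\eps$ you only have $|W|\le C|z|^2$, and the assumptions give no decay on $\dot q$ (only on $q-m_\pm$), so the best $\eps$-independent bound on the integrand is $\dot q(\xi)\,\|\dot q\|_\infty\,|z|^2J(z)$. But
\[
\int_{|z|\le\delta/\eps}|z|^2J(z)\,dz\;\sim\;\int_0^{\delta/\eps}r^{1-\alpha}\,dr\;\sim\;(\delta/\eps)^{2-\alpha}\to\infty,
\]
so there is no uniform majorant in $z$. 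Integrating $\xi$ first to get $G(z_1+s\eps W)$ does not help either, because $G$ is merely bounded and $L^1$; you would need quantitative pointwise decay of $G$, which in turn would require decay of $\dot q$ that is not among the hypotheses. The paper itself flags this obstruction: in the Remark following \eqref{eq:matrix-rd>1bis} it notes that the integral defining the ``natural'' limit \eqref{eq:matrix-rd>1bis} is not even clearly well defined when $\alpha>1$.

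The fix---and this is exactly what the paper does---is to perform your two integrations by parts in $z_1$ \emph{before} passing to the limit, not after. Writing $\dot q(\xi+z_1+\eps\tau Cz\cdot z)=\partial_{z_1}\{q(\xi+z_1+\eps\tau Cz\cdot z)\}/(1+2\eps\tau(c_1z_1+v\cdot z'))$, integrating by parts in $z_1$, then in $\xi$, then again in $z_1$, converts the integrand to $q(\xi)\,[q(\xi+z_1+\cdots)-q(\xi)]\,\partial_{z_1}^2\{(Cz\cdot z)J(z)/(\cdots)\}$. Now the $q$-factors are bounded and decay in $\xi$ via \eqref{eq:q-decay}, while each $\partial_{z_1}$ gains a factor $|z|^{-1}$ from the homogeneity of $J$; the resulting majorant is integrable uniformly in $\eps$ on $\{|z|\le r_\eps\}$ with $r_\eps=\eps^{-\beta}$, $\beta\in(1/\alpha,1)$, and the boundary terms are $O(r_\eps^{1-\alpha})\to 0$. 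Only then does dominated convergence apply, yielding \eqref{eq:matrix-rd>1} directly rather than the formally equivalent \eqref{eq:matrix-rd>1bis}. In short, the order of operations is what makes the argument rigorous here; your integrations by parts are the right idea but come one step too late.
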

%-----------------------------------
We next treat error terms appearing in $(\mathrm{err})$.  
%-----------------------------------
\begin{lem}[Error terms $(\mathrm{err})$] \label{lem:estim-errors}
We have 
\begin{equation} \label{eq:estim-rtq}
 R[T_q] = o(\eps^\alpha) = o (\eta) 
\end{equation}
and
\begin{equation} \label{eq:estim-rtQ}
R[T_Q] =o(1)
\end{equation}
uniformly in $(e,t,x) \in \S \times Q_\gamma$ and
 for all $r \in \R$ and $\beta \le \bar \beta = \bar \beta ( \delta_\phi)$
\begin{equation}\label{estim:additional-error-term}
- \dot{q} (r) \frac{\delta_\phi}8  - 2 f'(q(r)) \beta  \le 0 \, .
\end{equation}
\end{lem}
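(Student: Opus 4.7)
The plan is to dispose of the three estimates in order of increasing difficulty. The pointwise bound \eqref{estim:additional-error-term} is elementary, while the two asymptotic estimates \eqref{eq:estim-rtq} and \eqref{eq:estim-rtQ} share a common strategy and contain the substantive difficulty.

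For \eqref{estim:additional-error-term}, I would use strict monotonicity of $q$ in Assumption~\ref{assum:atw}, which gives $\dot q > 0$ everywhere. Continuity of $f'$ together with the bistable hypothesis $f'(\bar m_\pm) > 0$ supplies $\delta_0 > 0$ such that $f' \ge 0$ on the two neighbourhoods $[\bar m_-, \bar m_- + \delta_0]$ and $[\bar m_+ - \delta_0, \bar m_+]$; on the corresponding $r$-ranges the left-hand side is automatically non-positive because $-2f'(q)\beta \le 0$. On the complementary range $q(r) \in [\bar m_- + \delta_0, \bar m_+ - \delta_0]$, monotonicity together with the uniform limits $q(\pm\infty,e,h) = m_\pm(h)$ forces $r$ to lie in a fixed compact interval (uniformly in $e$ and for small $h$), on which continuity of $\dot q$ yields a positive lower bound $\delta_q$. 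It then suffices to pick $\bar\beta = \delta_\phi \delta_q / (16\,\|f'\|_\infty)$.

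For \eqref{eq:estim-rtq}, the first observation is that the compensator $\int \eps B D_e q(r,e)\cdot z\,\un_B(\eps z) J(z)\,dz$ in \eqref{rtq} vanishes in the principal-value sense, because $z \mapsto z\,\un_B(\eps z) J(z)$ is odd against the even kernel. Setting $\xi_z = (d(t,x+\eps z)-2\beta)/\eps$ and $\D_z = \D(t,x+\eps z)$, the task reduces to estimating $\int [q(\xi_z, e) - q(\xi_z, \D_z)] J(z)\,dz$, which I would split at $|\eps z| = \delta$ with $\delta = \delta(\eps) \to 0$ carefully tuned. In the inner region, smoothness of $d$ on $Q_\gamma$ yields $\D_z - e = B \eps z + O(\eps^2|z|^2)$ by Taylor, and a further Taylor expansion of $q$ in its second slot using \eqref{eq:q-estim}, combined with reintroducing the zero-integral compensator to cancel the leading linear-in-$z$ piece (using symmetry of $B$), produces an integrand of size $\eps|z|^2$; integrating against $|z|^{-N-\alpha}$ yields an inner bound of order $\delta^{2-\alpha}\eps^{\alpha-1}$. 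In the outer region the crude bound $|q(\xi_z, e) - q(\xi_z, \D_z)| \le C$ is too weak, so I would invoke the sharp decay \eqref{eq:q-decay}: both values tend to the same $m_\pm(h)$ with rate $|\xi_z|^{-1-\alpha}$, and since $\xi_z \sim r + e \cdot z$ for large $|z|$, the tail is controlled away from the equator $\{e \cdot z = 0\}$; on the thin equatorial slab where $\xi_z$ stays bounded, the bound $|q(\xi_z, e) - q(\xi_z, \D_z)| \le C_q|\D_z - e|$ suffices thanks to the small $J$-measure of the slab.

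The estimate \eqref{eq:estim-rtQ} for $R[T_Q]$ follows exactly the same inner/outer template, with \eqref{eq:Q-estim} and \eqref{eq:Q-decay} in the roles of \eqref{eq:q-estim} and \eqref{eq:q-decay}; the weaker target $o(1)$ makes the analysis more forgiving. The main obstacle throughout is calibrating the cutoff $\delta(\eps)$ so that the inner Taylor remainder and the outer decay tail are simultaneously $o(\eps^\alpha)$ (respectively $o(1)$); the borderline case $\alpha = 1$, with its logarithmic rescaling $\eta = \eps|\ln\eps|$, is the most delicate because the two contributions meet at a critical balance of moments of $J$.
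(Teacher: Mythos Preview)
Your argument for \eqref{estim:additional-error-term} is correct and matches the paper's. For $R[T_q]$, however, you miss the paper's key simplification and introduce a gap. In the singular case the paper simply changes variables $\bar z=\eps z$: homogeneity of the kernel gives $J(z)\,dz=\eps^\alpha J(\bar z)\,d\bar z$, so the factor $\eps^\alpha$ falls out immediately and the remaining integral is $o(1)$ by dominated convergence using only \eqref{cond1:J} and \eqref{eq:q-estim}. Your inner/outer scheme never extracts this scaling, and your claimed inner bound ``integrand of size $\eps|z|^2$'' is wrong: Taylor expansion in the $e$-slot produces the linear piece $-D_eq(\xi_z,e)\cdot B\eps z$, while the reintroduced compensator is $+D_eq(r,e)\cdot B\eps z$; since $\xi_z=r+e\cdot z+O(\eps|z|^2)\neq r$, these do not cancel, and the residual $[D_eq(r,e)-D_eq(\xi_z,e)]\cdot B\eps z$ is only $O(\eps|z|)$, which is not integrable against $|z|^{-N-\alpha}$ at the origin for $\alpha\ge 1$. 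Your outer argument via \eqref{eq:q-decay} and an equatorial slab is also problematic, because uniformity over $(t,x)\in Q_\gamma$ lets $r$ range over all of $\R$, so the location of the ``slab where $\xi_z$ stays bounded'' is not controlled.

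For $R[T_Q]$ the paper does not repeat this template but first decomposes $R[T_Q]=R^1[T_Q]-R^2[T_Q]$, according to whether one varies only the first argument of $Q$ (giving $R^1$) or only the $(e,x)$ arguments (giving $R^2$). Then $R^2=O(\eps^\alpha)$ by the same change of variables $\bar z=\eps z$ together with the second-order bounds in \eqref{eq:Q-estim}, while $R^1$ is handled by a clean inner/outer split at $|z|=R=\eps^{-1/2}$: inside one uses $\|\dot Q\|_\infty$ and the second-order increment of $d$ to get $C\eps R^{2-\alpha}$, and outside one uses $\|Q\|_\infty$ to get $CR^{-\alpha}$. Neither \eqref{eq:q-decay} nor \eqref{eq:Q-decay} nor any slab analysis is needed.
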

%-----------------------------------
\begin{proof}
We first prove \eqref{eq:estim-rtq}. Through a change of variables, we get
\begin{multline*}
R[T_q] = \eps^{\alpha} \int \bigg\{ q \left( \frac{d (t,x+ \bar{z})-2\beta}\eps, \mathcal{D} (t,x)
\right) \\- q \left( \frac{d (t,x +\bar{z})-2\beta}\eps, \mathcal{D} (t,x+\bar{z}) \right) 
- B D_e q (r,e) \cdot \bar{z} \un_B (\bar{z})
\bigg\} J_\eps (\bar{z}) d \bar{z}
\end{multline*}
where $J_\eps (\bar{z}) = \eps^{-(N+\alpha)} J (\eps^{-1} \bar{z})$. By using \eqref{cond1:J} and
\eqref{eq:q-estim}, dominated convergence theorem permits to conclude. 

We next turn to the proof of \eqref{eq:estim-rtQ}. To prove it, we first write
\begin{equation}\label{cut-rtQ}
R[T_Q]  =  R^1[T_Q] - R^2 [T_Q]
\end{equation}
with 
\begin{eqnarray*}
R^1[T_Q] &= & 
\int \bigg\{ Q(r+ e \cdot z, e,t,x) - Q \left( \frac{d(t,x+\eps z)-2\beta}\eps ,
e ,t,x \right) \bigg\} J(z) dz \; , \\
R^2[T_Q] &= &  \int \bigg\{ Q \left(r+e\cdot z ,
\D (t,x+\eps z) ,t,x+\eps z \right)  \\
&&-Q(r+ e \cdot z, e,t,x) 
- (B D_e Q + D_x Q) \cdot \eps z \un_B (\eps z)
\bigg\} J(z) dz \; .
\end{eqnarray*}

As far as $R^1[T_Q]$ is concerned, we can write for any $R>0$  
\begin{eqnarray*}
R^1 [T_Q] &\le &\|\dot{Q} \|_\infty \frac12 \|D^2 d\|_{L^\infty(B(x,\eps R))} \eps
\int_{|z|\le R } |z|^2 J(z) dz \\ && + 2 \| Q\|_\infty \int_{|z| \ge R} J(z) dz \\
& \le & C \|\dot{Q} \|_\infty \frac12 \|D^2 d\|_{L^\infty(B(x,\eps R))} \eps R^{2-\alpha} 
+  2 C \| Q\|_\infty R^{-\alpha} 
\end{eqnarray*}
where we used \eqref{cond1:J} to get the second inequality. 
Choose now $R$ such that $\eps R \le 1$, $R \to +\infty$ and $\eps R^{2-\alpha} \to 0$;
for instance $R = \eps^{-1/2}$ permits to conclude in this case. 

As far as $R^2 [T_Q]$ is concerned, we use once again \eqref{cond1:J} in order
to write
\begin{multline*}
|R^2 [T_Q]| \le \eps^\alpha \int \bigg| Q \left( \dots ,
\D (t,x+\bar z) ,t,x+\bar z \right)  \\
-Q(\dots, \D (t,x),t,x) 
- (B D_e Q + D_x Q) \cdot \bar z \un_B (\bar z)
\bigg| C_J \frac{d \bar z}{|\bar z|^{N+\alpha}} \\
\le C \eps^\alpha 
\end{multline*}
where $C$ depends on $\sup_h \{ \|Q\|_\infty + \| D^2_{e,e} Q \|_\infty + \| D^2_{x,x} Q \|_\infty \}$
that is bounded by assumption (see Estimate~\eqref{eq:Q-estim}).

It remains to prove \eqref{estim:additional-error-term}. 
It is enough to prove that there exists a constant $C_{tw}$ which does not depend on $h$
and such that for all $r \in \R$
\begin{equation}\label{eq:q-newone}
\dot{q} (r) + C_{tw} f'(q(r)) \ge 0 \, .
\end{equation}
This inequality is trivial when $f'(q(r)) \ge 0$. Hence, we consider $r$ such that 
$f'(q(r)) \le 0$, that is to say
$$
\bar{m}_- < \bar{q}_- \le q(r) \le \bar{q}_+ < \bar{m}_+
$$
for some constants $\bar{q}_\pm$ which do not depend on $h$. If $r$ satisfies the previous
inequality, we deduce from \eqref{eq:q-decay} that 
$$
|r| \le R
$$
for some constant $R$ which does not depend either on $h$. 
Now \eqref{eq:q-newone} is clear. It is enough to find an estimate from below
for $\dot{q}$ on $[-R,R]$ which does not depend on $h$. The proof of the lemma 
is now complete. 
\end{proof}
%--------------------------
Using Lemmata~\ref{lem:corrector}, \ref{lem:a-eps-conv} and \ref{lem:estim-errors}  
we derive from \eqref{eq:U2} the following inequality
\begin{equation}\label{eq:U5}
\partial_t U + \frac{1}{\eps \eta}\{ - \mathcal{I}^\eps_\alpha U +  f (U) \} 
\le  \frac{-\beta}{\eps} + o(\frac1\eps) \le \frac{-\beta}{2\eps} \; . 
\end{equation} 
\medskip

\paragraph{Extension of the barrier away from the front.}
The remaining of the construction of the barrier consists in extending
the subsolution $U$ we constructed before in order that it is a
subsolution in $[t_0,t_0+h] \times \bar{B}(x_0,r)$ (in particular, far
from the front).  More precisely, we modify $U$ far from the
front. Following \cite{bs98,bd03}, we proceed in two steps. We first
extend it on $\{ d \le \gamma \}$ by $m_-$ and then
extend it on $ \{ d \ge \gamma \}$ by $m_+ - \beta \eta$
The difficulty is to keep it a subsolution. We do this by truncating
properly $U$. Truncating it from below by $m_-$ is easy
but truncating it from above by $m_+ -\beta \eta$ is more
delicate.  \medskip

\noindent \textsc{Upper estimates for $U$.}
We start by estimating from above the ``barrier'' function $U$ we constructed before.
We claim that the following inequalities hold true
\begin{eqnarray}\label{prop:u1}
 U (t,x)   &\le& m_- \quad \text{ in } \{ d \le \beta \} \, , \\
\label{prop:u2}
U (t,x) &\le& m_+  - \beta \eta \, .
\end{eqnarray}
We first justify \eqref{prop:u1}. In view of the definition of $U$, we use 
 \eqref{eq:q-decay} and \eqref{eq:Q-decay},  in 
order to get
$$
U (t,x) \le m_- + O(\eps^{1+\alpha}) - 2 \beta \eta  \le m_-\, .
$$

We next justify \eqref{prop:u2} by adapting an argument from \cite{bs98}. 
First, \eqref{eq:Q-decay} implies that there exists $\bar{c} >0$ which does not depend on 
$h$ such that we have for  $|r| \ge \bar{c}$
$$
| Q (r,e,t,x) | \le \beta \, .
$$
Next, we claim that there exists $\nu (\bar{c}) >0$ such that 
we have for $|r| \le \bar{c}$
\begin{equation}\label{eq:q-prop}
q (r) \le m_+  - \nu (\bar{c}) \, .
\end{equation}
Now if $|d(t,x)-2\beta| \ge \eps \bar{c}$, then
$$
U (t,x) \le m_+ + \beta \eta - 2 \beta \eta \le m_+ - \beta \eta\, .
$$
In the other case, $|d(t,x)-2\beta| \le \eps \bar{c}$, then
$$
U (t,x) \le m_+ - \nu (\bar{c}) + \eta \|Q \|_\infty - 2 \beta \eta \le m_+ - \beta \eta
$$
as soon as $\eta \|Q\|_\infty \le \nu (\bar{c})$. 
\medskip

\noindent \textsc{Definition of $\bar{U}$.}
We define for $(t,x) \in [t_0,t_0+h] \times \R^N$
$$
\bar{U} (t,x) = \max ( U (t,x), m_- ) \; .
$$
From \eqref{prop:u1} and \eqref{prop:u2}, we get
\begin{eqnarray}
\label{prop:baru1}
\bar U &=& m_- \quad \text{ in } \{ d \le \beta\} \, , \\
\label{prop:baru2}
m_- &\le& \bar U \le m_+ - \beta \eta \, .
\end{eqnarray}
On one hand, a classical argument implies that $\bar{U}$ is a
subsolution of \eqref{eq:rd} on $Q_\gamma= \{ -\gamma \le d \le \gamma\}$ since it is the maximum of
two subsolutions.  On the other hand, \eqref{prop:u1} implies that $\bar{U} (t,x) =
m_-$ on $\{ d  \le  -\gamma /3 \}$.  Thus $\bar{U}$ is a subsolution on $\{d \le \gamma \}$.

We also shed light on the fact that $\bar{U}$ satisfies \eqref{eq:U5} at points of $Q_\gamma$ 
where $\bar{U} = U$.  
This will be used later on. We will also need the  following lemma.
%----------
\begin{lem}[Gradient estimate for the barrier]\label{lem:estim-nablaubar}
There exists $\bar{C} >0$ such that for all $(t,x) \in [t_0,t_0+h] \times \R^N$ and all $\eps >$ small enough,
$$
| \eps D \bar{U} (t,x) | \le \bar{C} \; .
$$
\end{lem}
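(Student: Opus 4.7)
The plan is to reduce the estimate for $\bar U = \max(U, m_-)$ to an analogous estimate for $U$ itself. Since the map $s \mapsto \max(s, m_-)$ is $1$-Lipschitz, one has $|D\bar U(t,x)| \le |D U(t,x)|$ at every point of differentiability (and the Lipschitz constant of $\bar U$ is bounded by that of $U$ everywhere). It therefore suffices to establish $|\eps D_x U| \le \bar C$ uniformly in $(t,x) \in [t_0,t_0+h] \times \R^N$ and $\eps$ small.

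Applying the chain rule to
$$U(t,x) = q\Bigl(\tfrac{d(t,x)-2\beta}\eps, \D(t,x),-\beta h\Bigr) + h\, Q\Bigl(\tfrac{d(t,x)-2\beta}\eps, \D(t,x),t,x,-\beta h\Bigr) - 2\beta h$$
gives
$$\eps D_x U = (\dot q + h\dot Q)\, Dd + \eps\, (D_e q + h D_e Q)\, D\D + \eps h\, D_x Q.$$
Each factor admits a uniform bound from the hypotheses at hand: $|Dd|\le 1$ almost everywhere since $d$ is a signed distance function; $D\D$ is bounded by construction ($\D$ is chosen smooth with $\tfrac12\le |\D|\le \tfrac32$); $|D_e q|\le C_q$ by \eqref{eq:q-estim}; and $|\dot Q|, |D_e Q|, |D_x Q|\le C_Q$ by \eqref{eq:Q-estim}. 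All the auxiliary terms are thus $O(\eps)+O(h)+O(\eps h)$, hence negligible for $\eps$ small, and the leading contribution to $|\eps D_x U|$ reduces to $|\dot q|\,|Dd|$.

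The main obstacle is therefore the uniform bound $\|\dot q\|_\infty < \infty$ for the traveling wave, uniformly in $h \in (0,H)$ and $e\in \S$. This regularity statement is not listed explicitly in Assumption~\ref{assum:atw} but is natural in the present setting: informally, when $c(e,h)\ne 0$ one may solve the traveling wave equation \eqref{eq:tw-rd} for $\dot q = (h + \mathcal{I}_e[q] - f(q))/c$, and the decay \eqref{eq:q-decay} together with the regularity of $q$ render $\mathcal{I}_e[q]$ bounded, so that $\dot q$ is too; the standing-wave case $c=0$ is handled by an analogous fractional elliptic regularity statement for $-\mathcal{I}_e[q^0] + f(q^0)=0$. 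Taking this input for granted (it belongs to the companion paper alluded to in the introduction), the conclusion of the lemma follows with $\bar C := \|\dot q\|_\infty + 1$, say, provided $\eps$ and $h=\eta$ are small enough.
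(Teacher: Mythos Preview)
Your argument is correct and is exactly the approach the paper has in mind: its own proof is the one-line remark that the bound follows from the definition of $\bar U$ together with Estimates~\eqref{eq:q-estim} and \eqref{eq:Q-estim}, which amounts to the chain-rule computation you carry out. Your observation that the uniform bound $\|\dot q\|_\infty<\infty$ is not explicitly listed in Assumption~\ref{assum:atw} is accurate; the paper uses it tacitly here (and elsewhere), treating it as part of the traveling-wave package to be supplied by the companion construction.
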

%-----------
\begin{proof}
  This is a simple consequence of the definition of $\bar{U}$ and of
  Estimates~\eqref{eq:q-estim} and \eqref{eq:Q-estim}.
\end{proof}

\noindent \textsc{Definition of $U^{\eps,\beta}$.}
We finally define 
$$
U^{\eps,\beta} = \left\{ \begin{array}{ll}
    \psi(d) \bar{U} + (1-\psi(d)) (m_+-\beta \eta)  & \text{ if } d < \gamma \; ,\\
    m_+ -\beta \eta & \text{ if } d \ge \gamma
\end{array} \right. 
$$
where $\psi:\R \to [0,1]$ is a smooth function such that $\psi(r)=1$
if $r \le \gamma/2$, $\psi(r) = 0$ if $r \ge 3\gamma/4$. We will see
below that it is convenient to assume additionally that $\psi
(5\gamma/8+r)= 1-\psi (5\gamma/8-r)$.
We deduce from properties of $\psi$ and \eqref{prop:baru1} and \eqref{prop:baru2} that
\begin{align}
\label{prop:ueps1}
U^{\eps,\beta} &= m_-  & \text{ in } \{d \le \beta\} \, \\
\label{prop:ueps2}
U^{\eps,\beta} &= m_+ -\beta \eta & \text{ in } \{d \ge 3\gamma/4 \} \, \\
\label{prop:ueps3}
m_- \le U^{\eps,\beta} &\le m_+ -\beta \eta \, . &
\end{align}
In particular, \eqref{eq:clU} is clearly satisfied. 

We also deduce from \eqref{prop:ueps3} and the definitions of $\bar U$ and $U^{\eps,\beta}$ that
\begin{equation}\label{eq:ueps-ubar}
U \le \bar{U} \le U^{\eps,\beta} \, .
\end{equation}
In particular, if $d(t,x) > 2 \beta$, then $d>2\beta$ in a
neighbourhood $\mathcal{V}$ of $(t,x)$. In particular, 
$$
U^{\eps,\beta} \ge U  = m_+ + o(\eta) - 2 \beta \eta \quad \text{ in } \mathcal{V} \, .
$$
Using now \eqref{eq:m-eps-bar}, we deduce that \eqref{eq:limfracU} also holds true.
\medskip

\paragraph{The barrier $U^{\eps,\beta}$ is a subsolution of
  \eqref{eq:rd} on $[t_0, t_0+h] \times \R^N$.}  We distinguish three cases.

 Consider first a
point $(t,x)$ such that $d(t,x) < \gamma /2$. In this case,
$U^{\eps,\beta} (s,y) = \bar{U}(s,y)$ in a neighbourhood of $(t,x)$
and this implies $\partial_t U^{\eps,\beta} (t,x) = \partial_t \bar{U}
(t,x)$ (in the viscosity sense).  In order to prove that
$U^{\eps,\beta}$ is a subsolution of \eqref{eq:rd} at $(t,x)$ it is
enough to prove that $\mathcal{I}^\eps_\alpha \bar{U}(t,x) \le
\mathcal{I}^\eps_\alpha U^{\eps,\beta}(t,x)$ since $\bar{U}$ is a
subsolution.  Such an inequality is a consequence of
\eqref{eq:ueps-ubar} and the fact that $U^{\eps,\beta} (t,x) =
\bar{U}(t,x)$.

Consider next a point $(t,x)$ such that $d(t,x) > 3 \gamma /4$. In
this case, there exists $r_0 >0$ such that $d(s,y) > 3 \gamma /4$ for
$y \in B((t,x),r_0)$. Consequently, $U^{\eps,\beta} (s,y) = m_+
- \beta \eta$ for $(s,y) \in B((t,x),r_0)$.  This yields that
$\partial_t U^{\eps,\beta} (t,x) =0$ (in the viscosity sense) so we
have to prove that
$$
f(U^{\eps,\beta} (t,x)) \le \mathcal{I}^\eps_\alpha (U^{\eps,\beta}) (t,x) \, .
$$
To get the previous inequality, on one hand, we have
\begin{eqnarray*}
f(U^{\eps,\beta} (t,x)) &=& f(m_+ -\beta \eta) \\
&= & f(m_+ (-\beta \eta)) - f'(m_+ - \theta \beta \eta) \beta \eta\\
&\le&  -   \beta \eta 
\end{eqnarray*}
 and on the other hand,
\begin{eqnarray*}
\mathcal{I}^\eps_\alpha (U^{\eps,\beta}) (t,x) &=&
 \int [ U^{\eps,\beta} (t,x+\eps z) - (m_+ -\beta \eta)] J(z) dz \\
&=& \int_{|z| \ge r_0 / \eps } [U^{\eps,\beta} (t,x+\eps z) - (m_+-\beta \eta)] J(z) dz \\
& \ge & - C \int_{|z| \ge r_0 / \eps } J(z) dz = - C \eps^\alpha  \ge - \beta \eta 
\end{eqnarray*}
in view of the definition of $\eta$. Notice that this argument fails in the case $\alpha <1$. 

Finally, we consider $(t,x)$ such that $ \gamma/2 \le d(t,x) \le 3
\gamma /4$.  

It is convenient to introduce $\psi_d (x)= \psi ( d (x))$.  Remark
that $\bar U =U$ in a neighbourhood of $(t,x)$. Hence, we mentioned
above that $\bar U$ satisfies \eqref{eq:U5} at $(t,x)$ 
\begin{equation}\label{eq:barU}
  \partial_t \bar U + \frac{1}{\eps \eta}\{ - \mathcal{I}^\eps_\alpha \bar U +  f (\bar U) \} 
  \le \frac{-\beta}{2\eps} \, .
\end{equation}
We use \eqref{eq:barU} and compute (in the viscosity sense)
\begin{multline}
  \partial_t U^{\eps,\beta} + \frac1{\eps \eta}
  \left(f(U^{\eps,\beta}) - \mathcal{I}^\eps_\alpha U^{\eps,\beta}
  \right) \\ = \psi' \times (\partial_t d) \times (\bar{U} -
  m_++\beta \eta) + \psi_d \partial_t \bar{U} + \frac1{\eps
    \eta} f(U^{\eps,\beta}) - \frac1{\eps \eta} I^\eps_\alpha
  (U^{\eps,\beta} ) \\ \label{eq:presque} \le C(\psi) \bigg|\bar U - (m^+ (-\beta \eta) -\beta \eta) \bigg| +
  \frac1{\eps \eta} \bigg[ f(U^{\eps,\beta})- \psi_d f(\bar{U}) \bigg]  \\
+ \frac1{\eps \eta} \bigg[\psi_d
  \mathcal{I}^\eps_\alpha \bar{U}- \mathcal{I}^\eps_\alpha
  U^{\eps,\beta} \bigg ]- \psi_d  \frac\beta{2\eps}
\end{multline}
where $C(\psi)$ only depends on $\psi$ and $\gamma$.  We now estimate
each term of the right hand side of \eqref{eq:presque}.

First, we derive directly from the equality $\bar U = U$ and the very definition 
of $U$ the following lemma
\begin{lem}\label{lem:estimbarU}
We have $\bar U = m_+  - 2 \beta \eta + o (\beta \eta)$. In particular,
\begin{equation}\label{prop:ubar}
\bigg|\bar U - (m^+  -\beta \eta) \bigg| \le 2\beta \eta \, .
\end{equation}
\end{lem}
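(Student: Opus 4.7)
The plan is a direct computation from the definitions of $U$ and $\bar U$, using the prescribed decay at $+\infty$ of both the travelling wave $q$ and the corrector $Q$. Throughout I recall the convention set in the proof of Proposition~\ref{prop:gen-flow} that $m_\pm$ stands for $m_\pm(-\beta\eta)$.

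First, I would set up the scales. At a point $(t,x)$ with $\gamma/2 \le d(t,x) \le 3\gamma/4$, and under the standing restriction $\beta \le \bar\beta$ which I may further shrink so that $\beta \le \gamma/8$, the shifted argument satisfies $d(t,x) - 2\beta \ge \gamma/4 > 0$. Consequently,
\[
r \;:=\; \frac{d(t,x) - 2\beta}{\eps} \;\ge\; \frac{\gamma}{4\eps} \;\longrightarrow\; +\infty
\quad\text{as } \eps\to 0,
\]
uniformly in such $(t,x)$. This is exactly the regime in which the asymptotic information on $q$ and $Q$ applies.

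Next, I would insert this into the definition of $U$. With $h=\eta$ and $\mathcal{D}$ smooth with $|\mathcal{D}|$ bounded, the decay estimate \eqref{eq:q-decay} gives
\[
q\bigl(r,\mathcal{D}(t,x),-\beta\eta\bigr)
\;=\; m_+(-\beta\eta) \,+\, O\!\left(\tfrac{1}{r^{1+\alpha}}\right)
\;=\; m_+ \,+\, O(\eps^{1+\alpha}),
\]
while the uniform decay \eqref{eq:Q-decay} yields $Q(r,\mathcal{D}(t,x),t,x,-\beta\eta) = o_\eps(1)$, uniformly in the remaining variables. Plugging these in,
\[
U(t,x) \;=\; m_+ \,-\, 2\beta\eta \,+\, O(\eps^{1+\alpha}) \,+\, \eta\cdot o_\eps(1).
\]

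The last step is to read off the two error terms at the correct order $o(\beta\eta)$, and to conclude that $\bar U = U$ at this point. From the definition \eqref{def:eta} of $\eta$, one has $\eps^{1+\alpha}/\eta = \eps^\alpha$ if $\alpha>1$ and $\eps^{1+\alpha}/\eta = \eps/|\ln\eps|$ if $\alpha=1$; in both cases $\eps^{1+\alpha} = o(\eta)$, so for fixed $\beta>0$ the term $O(\eps^{1+\alpha})$ is $o(\beta\eta)$. Similarly $\eta\cdot o_\eps(1) = o(\beta\eta)$. Hence $U(t,x) = m_+ - 2\beta\eta + o(\beta\eta)$. Since $m_+$ is within $O(\beta\eta)$ of $\bar m_+$ by \eqref{eq:m-eps-bar}, while $m_- \le \bar m_-$ is bounded away from $\bar m_+$, we have $U(t,x) > m_-$ for $\eps$ small, so $\bar U(t,x) = \max(U(t,x),m_-) = U(t,x)$, giving the first assertion. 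The ``in particular'' inequality \eqref{prop:ubar} then follows from the triangle inequality:
\[
\bigl|\bar U - (m_+ - \beta\eta)\bigr|
\;=\; \bigl|{-\beta\eta} + o(\beta\eta)\bigr|
\;\le\; \beta\eta + o(\beta\eta)
\;\le\; 2\beta\eta
\]
for all $\eps$ sufficiently small. The only subtle point—not so much an obstacle as a verification—is confirming that the correction $O(\eps^{1+\alpha})$ produced by \eqref{eq:q-decay} is negligible against $\beta\eta$ in each of the regimes $\alpha>1$ and $\alpha=1$; once the scaling \eqref{def:eta} is in hand this is immediate.
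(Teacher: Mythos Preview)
Your proof is correct and follows the same line as the paper's: the paper simply says it ``derives directly from the equality $\bar U = U$ and the very definition of $U$,'' and you have spelled out exactly those details, invoking \eqref{eq:q-decay} and \eqref{eq:Q-decay} to expand $U$ at large $r$ and then checking the resulting error terms are $o(\beta\eta)$ via \eqref{def:eta}. Your verification that $\bar U = U$ in this region (which the paper asserts just before the lemma without argument) is a welcome extra detail.
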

We now estimate the second term of the right hand side of \eqref{eq:presque}. 
%-----------
\begin{lem}
\begin{equation}
f(U^{\eps,\beta}) - \psi_d f(\bar{U}) \le - \frac12 (1 -\psi_d) \beta \eta \, .
\label{eq:estim-f}
\end{equation}
\end{lem}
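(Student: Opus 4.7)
The key observation I would use is that in the relevant region $\gamma/2 \le d \le 3\gamma/4$, the very definition of $U^{\eps,\beta}$ gives the convex combination
\begin{equation*}
U^{\eps,\beta} = \psi_d \bar{U} + (1-\psi_d)(m_+ - \beta\eta).
\end{equation*}
The plan is to apply the mean value theorem to $f(\bar{U}) - f(U^{\eps,\beta})$ and to $f(m_+-\beta\eta) - f(U^{\eps,\beta})$, and then to combine using the elementary identities
\begin{equation*}
\bar{U} - U^{\eps,\beta} = (1-\psi_d)(\bar{U} - (m_+-\beta\eta)), \qquad m_+-\beta\eta - U^{\eps,\beta} = \psi_d(m_+-\beta\eta-\bar{U}).
\end{equation*}
After forming $\psi_d\cdot(\text{first}) + (1-\psi_d)\cdot(\text{second})$ and rearranging, one obtains the identity
\begin{equation*}
f(U^{\eps,\beta}) - \psi_d f(\bar{U}) = (1-\psi_d) f(m_+-\beta\eta) - \psi_d(1-\psi_d)(m_+-\beta\eta - \bar{U})\bigl[f'(\zeta_2)-f'(\zeta_1)\bigr]
\end{equation*}
for suitable $\zeta_1,\zeta_2$ lying between $\bar{U}$ and $m_+-\beta\eta$.

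Next I would estimate each of the two terms on the right. For the error term, Lemma~\ref{lem:estimbarU} gives $|\bar{U} - (m_+-\beta\eta)| \le 2\beta\eta$, so $\zeta_1,\zeta_2$ both sit in a shrinking neighborhood of $\bar{m}_+$; the continuity of $f'$ produces a modulus $\omega$ with $|f'(\zeta_2)-f'(\zeta_1)| \le \omega(\beta\eta) \to 0$, whence this term is uniformly $o\bigl((1-\psi_d)\beta\eta\bigr)$. For the main term, the defining property of the equilibrium gives $f(m_+) = f(m_+(-\beta\eta)) = -\beta\eta$, and the fundamental theorem of calculus yields
\begin{equation*}
f(m_+-\beta\eta) = -\beta\eta - \beta\eta \int_0^1 f'(m_+ - s\beta\eta)\,ds.
\end{equation*}
Since $m_+ \to \bar{m}_+$ as $\beta\eta \to 0$ and $f'(\bar{m}_+) > 0$ by Assumption~\ref{assum:fbistable}, the integral is bounded below by $\delta := f'(\bar{m}_+)/2 > 0$ for $\beta\eta$ small enough, so that $f(m_+-\beta\eta) \le -(1+\delta)\beta\eta$.

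Combining the two estimates yields
\begin{equation*}
f(U^{\eps,\beta}) - \psi_d f(\bar{U}) \le -(1+\delta)(1-\psi_d)\beta\eta + o\bigl((1-\psi_d)\beta\eta\bigr) \le -\frac{1}{2}(1-\psi_d)\beta\eta
\end{equation*}
as soon as $\beta \le \bar{\beta}(\delta_\phi)$ and $\eps$ is small enough. The argument is essentially algebraic and I expect no substantive obstacle; the only delicate point is noticing that the MVT error carries the factor $\psi_d(1-\psi_d)$, which is precisely what allows the continuity-of-$f'$ loss of order $\omega(\beta\eta)$ to be absorbed by the linear main term of order $(1-\psi_d)\beta\eta$ produced by the bistable structure of $f$.
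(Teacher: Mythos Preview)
Your argument is correct and uses the same ingredients as the paper: the convex-combination structure of $U^{\eps,\beta}$, Lemma~\ref{lem:estimbarU}, the identity $f(m_+)=-\beta\eta$, the positivity $f'(\bar m_+)>0$, and the mean value theorem. The only difference is bookkeeping: the paper splits
\[
f(U^{\eps,\beta}) - \psi_d f(\bar U) = (1-\psi_d)f(U^{\eps,\beta}) + \psi_d\bigl(f(U^{\eps,\beta})-f(\bar U)\bigr)
\]
and estimates each piece separately (first bounding $f(U^{\eps,\beta})$ via $f(m_+)$, then handling the increment by a single MVT), whereas you apply the MVT twice to reach the symmetric identity with main term $(1-\psi_d)f(m_+-\beta\eta)$ and a remainder carrying the factor $\psi_d(1-\psi_d)$. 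Your arrangement isolates the leading term a bit more cleanly and makes transparent why the error is $o\bigl((1-\psi_d)\beta\eta\bigr)$; the paper's version is more hands-on but arrives at the same $(-1+o(1))(1-\psi_d)\beta\eta$ bound. Neither approach offers a real advantage over the other.
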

%--------------
\begin{proof}
From Lemma~\ref{lem:estimbarU}, we have
\begin{eqnarray*}
U^{\eps,\beta} &=& \psi_d (m_+ - 2\beta \eta+o(\beta \eta))+ (1-\psi_d) (m_+ -\beta \eta) \\
& = & m_+ - \beta \eta - \psi_d (1 + o(1)) \beta \eta \, .
\end{eqnarray*}
In particular, $U^{\eps,\beta} < m_+$. Hence,
\begin{eqnarray*}
f(U^{\eps,\beta}) & = & f(U^{\eps,\beta})- f (m_+) + f(m_+) \\
& \le & -(f'(\bar{m}_+) +o(1) )(1+\psi_d+o(1)) \beta \eta  - \beta \eta \\
& \le & -(f'(\bar{m}_+)+1 +o(1) )  \beta \eta  
\end{eqnarray*}
Lemma~\ref{lem:estimbarU} also implies
$$
0 \le U^{\eps,\beta} - \bar U = (1-\psi_d) (1+o(1))\beta \eta \, .
$$
Hence, we obtain
\begin{eqnarray*}
  f(U^{\eps,\beta}) - \psi_d f(\bar{U}) &=& (1-\psi_d) f(U^{\eps,\beta}) + \psi_d (f(U^{\eps,\beta}) - f(\bar{U})) \\
  & \le & - (1-\psi_d) (f'(\bar{m}_+)+1 +o(1) ) \beta \eta  \\
&& + \psi_d (f' (\bar{m}_+) + o(1)) (1-\psi_d)(1 + o(1)) \beta \eta   \\
& \le & (-1+o(1)) (1-\psi_d)\beta \eta \le -\frac12 (1-\psi_d) \beta \eta \, .
\end{eqnarray*}
\end{proof}

We now turn to the third term of the right hand side of
\eqref{eq:presque} whose estimate is more delicate.  It is given by
the following technical lemma.
%------------------
\begin{lem}\label{lem:ultime}
For any $\gamma_0 >0$, there exists a function $O(\eps^\alpha)$ such that
\begin{equation}\label{eq:ultime}
\psi_d \mathcal{I}^\eps_\alpha \bar{U}- \mathcal{I}^\eps_\alpha U^{\eps,\beta} 
\le  \gamma_0 \eta + O (\eps^\alpha) \; . 
\end{equation}
\end{lem}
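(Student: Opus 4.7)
The plan is to recognise the left-hand side of \eqref{eq:ultime} as a commutator of $\mathcal{I}^\eps_\alpha$ with multiplication by $\psi_d$, applied to the function $w := \bar U - v$, where $v := m_+ - \beta \eta$ is constant. Writing $U^{\eps,\beta} = v + \psi_d w$ and using that $\mathcal{I}^\eps_\alpha$ annihilates constants, together with the evenness of $J$ from \eqref{cond1:J} (which, via $\int z\, \un_B(\eps z) J(z) dz = 0$, annihilates the linear correction that would otherwise survive the expansion), a short algebraic manipulation collapses the commutator to
\begin{equation*}
\psi_d(x) \mathcal{I}^\eps_\alpha \bar U(x) - \mathcal{I}^\eps_\alpha U^{\eps,\beta}(x) = \int \bigl[\psi_d(x) - \psi_d(x+\eps z)\bigr] w(x+\eps z) J(z)\, dz,
\end{equation*}
understood as a principal value integral.

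Next I would split the $z$-domain according to the position of $x+\eps z$ relative to the front. Setting $A := \{z \in \R^N : d(t, x + \eps z) \ge \gamma/4\}$ and $B := \R^N \setminus A$, the hypothesis $d(t,x) \in [\gamma/2, 3\gamma/4]$ together with $|Dd| = 1$ on $Q_\gamma$ forces every $z \in B$ to satisfy $|z| \ge \gamma/(4\eps)$. Combined with the tail bound from \eqref{cond1:J} and the pointwise estimate $|w| \le m_+ - m_-$, the $B$-contribution is immediately bounded by $C \int_{|z| \ge \gamma/(4\eps)} J(z) dz = O(\eps^\alpha)$.

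The heart of the proof lies in the $A$-integral. Here the decisive observation is that $w$ is not merely bounded but satisfies the much sharper bound $w = -\beta\eta + e$ with $\|e\|_{L^\infty(A)} = o(\eta)$. Indeed, for any $y$ with $d(y) \ge \gamma/4$, the variable $r := (d(y)-2\beta)/\eps$ satisfies $r \ge (\gamma/4 - 2\beta)/\eps \to +\infty$, so \eqref{eq:q-decay} yields $q(r, \mathcal{D}(y), -\beta\eta) - m_+(-\beta\eta) = O(\eps^{1+\alpha})$, \eqref{eq:Q-decay} yields $\eta Q(r, \dots) = o(\eta)$, and \eqref{eq:m-eps-bar} absorbs the $O(\beta\eta)$ gap between $m_+$ and $m_+(-\beta\eta)$. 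Substituting $w = -\beta\eta + e$ in the commutator identity restricted to $A$, the $(-\beta\eta)$-piece contributes $\beta\eta \cdot \mathcal{I}^\eps_\alpha \psi_d(x)$ plus an $O(\beta\eta\cdot\eps^\alpha)$ boundary term coming from the restriction to $A$; since $\psi_d$ is smooth, the standard $\alpha$-scaling of $\mathcal{I}^\eps_\alpha$ gives $\mathcal{I}^\eps_\alpha \psi_d = O(\eps^\alpha)$, hence this piece is $o(\eta)$ as soon as $\alpha \ge 1$. The $e$-piece is further split as $e(x+\eps z) = e(x) + [e(x+\eps z) - e(x)]$: the constant part contributes $o(\eta) \cdot O(\eps^\alpha)$, while the varying part is absolutely convergent because both factors vanish at $z = 0$, and is estimated by splitting between $|z| \le 1$ and $|z| \ge 1$. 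On the near region one applies a second-order Taylor expansion together with Lemma~\ref{lem:estim-nablaubar} -- sharpened inside $A$ via the interior decay of $\dot q$ and the boundedness of $\dot Q$ from \eqref{eq:Q-estim} -- yielding an $O(|\ln\eps| \eps^2)$ bound; on the far region one uses $|e| = o(\eta)$ uniformly in $A$ together with $|\psi_d(x) - \psi_d(x+\eps z)| \le \min(C\eps|z|, 2)$, which yields $o(\eta)$ by direct integration.

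Putting everything together produces a bound of the form $o_\eps(1) \cdot \eta + O(\eps^\alpha)$, uniform in $(t,x) \in Q_\gamma$, which rewrites as $\gamma_0 \eta + O(\eps^\alpha)$ for $\eps$ small enough and therefore gives \eqref{eq:ultime}. I expect the main obstacle to be the third paragraph: a naive estimate based only on $|w| \le m_+ - m_-$ produces an irreducible $O(\eta)$ error that is insufficient to accommodate an \emph{arbitrary} $\gamma_0$. Upgrading the pointwise bound to $|w + \beta\eta| = o(\eta)$ throughout $A$ is what forces the systematic use of the decay of both the traveling wave $q$ and the corrector $Q$ at $+\infty$, and this smallness is then amplified by the favourable scaling $\mathcal{I}^\eps_\alpha \psi_d = O(\eps^\alpha)$ that holds for smooth test functions in the singular case.
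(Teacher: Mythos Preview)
Your approach is essentially the paper's: both arguments hinge on the same commutator/product-rule identity expressing $\psi_d \mathcal{I}^\eps_\alpha \bar U - \mathcal{I}^\eps_\alpha U^{\eps,\beta}$ in terms of $w = \bar U - (m_+ - \beta\eta)$, and on the crucial smallness $|w| = O(\beta\eta)$ in the region $\{d \ge \gamma/4\}$ combined with $\mathcal{I}^\eps_\alpha \psi_d = O(\eps^\alpha)$. The paper first truncates to $|z| \le \gamma/(4\eps)$ and then applies the product formula $\mathcal{J}^\eps(\varphi\varphi') = \varphi\,\mathcal{J}^\eps\varphi' + \varphi'\,\mathcal{J}^\eps\varphi + \text{cross term}$; your $A/B$ split and your ``constant/varying'' decomposition of the $e$-piece produce exactly the same two terms.

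There is, however, a gap in how you estimate the cross term. You split at $|z|=1$ and invoke ``interior decay of $\dot q$'' to upgrade Lemma~\ref{lem:estim-nablaubar} to $|De| = O(1+\eta/\eps)$ inside $A$. That decay is \emph{not} among the paper's hypotheses---only \eqref{eq:q-decay} on $q-m_\pm$ is assumed, which says nothing about $\dot q$---and without it your near-region estimate degrades to $O(\eps)$, which for $\alpha>1$ is merely $O(\eta)$ and cannot absorb an arbitrary $\gamma_0$. The paper sidesteps this by splitting the cross term at $|z|=\eps$ instead: on $|z|\le\eps$ the crude bound $|\eps D\bar U|\le\bar C$ suffices because the domain is tiny, giving $O(\eps\cdot\eps^{2-\alpha})=O(\eps^{3-\alpha})=o(\eta)$; on $\eps\le|z|\le \gamma/(4\eps)$ one uses $|\bar U(x+\eps z)-\bar U(x)|\le 4\beta\eta$ directly from \eqref{prop:ubar} together with $|\psi_d(x+\eps z)-\psi_d(x)|\le C\eps|z|$, integrating $|z|J(z)$ over this annulus to get $O(\beta\eta\,\eps^{2-\alpha})=o(\eta)$ (resp.\ $O(\beta\eta\,\eps|\ln\eps|)=o(\eta)$ when $\alpha=1$). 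Moving your split point from $|z|=1$ to $|z|=\eps$ closes the gap with no appeal to any decay of $\dot q$.
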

%------------------
\begin{proof}
We would like first to point out that we can forget the time variable in this proof
since it plays no role.

We first remark that   for $r_0 =\gamma/4$, 
$$
|z| \le \frac{r_0}\eps \Rightarrow
U^{\eps,\beta} (x+\eps z) = \psi_d (x+\eps z)(\bar{U}(x+\eps z) -m_++\beta \eta) + (m_+-\beta \eta) \, .
$$
Indeed, $|d(x+\eps z) - d(x)| \le r_0 = \gamma/4$ and this implies $d(x+\eps z) \in 
(\gamma/4,\gamma)$. In particular $d(x+\eps z ) \le \gamma$ and \eqref{prop:ubar} holds true.

We next approximate the quantity we are estimating by truncating large $z$'s. Precisely, 
using the previous remark and the fact that the mass of $J$ outside $B_{r_0/\eps}$ is $O(\eps^\alpha)$, 
we write
\begin{eqnarray*}
  \psi_d \mathcal{I}^\eps_\alpha \bar{U}- \mathcal{I}^\eps_\alpha U^{\eps,\beta} 
  &=& \psi_d \mathcal{J}^\eps \bar{U} - \mathcal{J}^\eps [ \psi_d (\bar{U} -m_++\beta \eta) 
  + (m_+-\beta \eta) ]
  +O (\eps^\alpha)\\
  &=&\psi_d \mathcal{J}^\eps (\bar{U} -m_++\beta\eta) 
  - \mathcal{J}^\eps  [ \psi_d (\bar{U} -m_++\beta \eta) ]  + O (\eps^\alpha)
\end{eqnarray*}
where the operator $\mathcal{J}^\eps$ is defined as follows
$$
\mathcal{J}^\eps \varphi (x) = \int_{\eps |z| \le r_0} [\varphi(x+\eps z) - \varphi(x) - D \varphi(x) \cdot 
\eps z \un_B( z) ] J(z)dz
$$
and where $O(\eps^\alpha)$ only depends on $\|Q\|_\infty$ and $\bar{m}_\pm$ (for $\eps$ small enough).

We next use the following equality 
\begin{multline*}
\mathcal{J}^\eps ( \varphi \varphi') (x) - \varphi(x) \mathcal{J}^\eps \varphi' (x) 
- \varphi'(x) \mathcal{J}^\eps \varphi (x) \\ 
= \int_{\eps |z| \le r_0} (\varphi(x+\eps z) - \varphi(x))
(\varphi'(x+\eps z )-\varphi'(x)) J(z) dz
\end{multline*}
with $\varphi = \psi_d$ and $\varphi'=\bar{U} -m_+ +\beta\eta$. We obtain
\begin{multline*}
\psi_d \mathcal{I}^\eps_\alpha \bar{U}- \mathcal{I}^\eps_\alpha U^{\eps,\beta} 
= - (\bar{U}  - m_+ +\beta \eta) \mathcal{J}^\eps \psi_{d}  \\
  - \int_{\eps |z| \le r_0} (\psi_d (x+\eps z) - \psi_d (x))(\bar{U}(x+\eps z) 
- \bar{U} (x)) J(z) dz + O (\eps^\alpha) \; . 
\end{multline*}
Recalling that \eqref{prop:ubar} holds true for $|z| \le \eps^{-1} r_0$, we write
\begin{multline*}
\psi_d \mathcal{I}^\eps_\alpha \bar{U}- \mathcal{I}^\eps_\alpha U^{\eps,\beta} 
\le 2 \beta \eta | \mathcal{J}^\eps \psi_d (x)| \\
+ \eps \| D \psi_d \|_{L^\infty(B(x,r_0))} 
\bigg(\eps \| D \bar{U} \|_{L^\infty(B(x,r_0))}  \bigg) \int_{|z| \le \eps}|z|^2 J (z) dz \\
+ \eps \| D \psi_d \|_{L^\infty(B(x,r_0))} 4 \beta \eta  \int_{\eps \le |z| \le r_0 \eps^{-1}}|z| 
  J(z) dz  + O (\eps^\alpha) \, .
\end{multline*}
We next estimate each term as follows.
\begin{eqnarray*}
| \mathcal{J}^\eps \psi_d (x)| &\le &C(\psi_d) \eps^{\alpha} \, ,\\
 \int_{|z| \le \eps}|z|^2 J (z) dz & \le & C \eps^{2-\alpha} \, , \\
 \int_{\eps \le |z| \le r_0 \eps^{-1}}|z|  J(z) dz & \le &
\left\{\begin{array}{ll} 
C \eps^{1-\alpha} & \text{ if } \alpha > 1 \\
C |\ln \eps| & \text{ if } \alpha = 1 
\end{array}
\right. \le \frac1\eps\, .
\end{eqnarray*}
The first estimate is easily obtained by adapting the arguments used above to estimate
$R^1[T_Q]$ and $R^2 [T_Q]$. 
Moreover, the constant $C(\psi_d)$ only depends on $\|\psi_d\|_\infty =1$ and 
$\|D^2 \psi_d \|_{L^\infty (B(x,r_0))}$. This last quantity only depends on 
$\gamma$ and $\|Dd\|_{L^\infty(B(x,r_0))}$, $\|D^2d\|_{L^\infty(B(x,r_0))}$ .
Hence, by using Lemma~\ref{lem:estim-nablaubar}, we have
$$
\psi_d \mathcal{I}^\eps_\alpha \bar{U}- \mathcal{I}^\eps_\alpha U^{\eps,\beta} 
 \le  C ( \eta \eps^{\alpha} +   \eps^{3-\alpha} +   \eta) + O (\eps^\alpha)
$$
(we used that $\beta \le 1$ for instance).
We achieve the proof by choosing $\eps$ small enough so that
$$
C  (\eta \eps^{\alpha} +   \eps^{3-\alpha} +  \eta) \le \gamma_0 \, .
$$
\end{proof}
We now combine \eqref{eq:presque}, \eqref{eq:estim-f} and \eqref{eq:ultime} to get
$$
\partial_t U^{\eps,\beta} + \frac1{\eps \eta} 
\left(f(U^{\eps,\beta}) - \mathcal{I}^\eps_\alpha U^{\eps,\beta} \right) 
\le  C \beta \eta  -  (1-\psi_d) \frac\beta{2\eps} - \psi_d \frac\beta{2\eps} 
+ \frac{\gamma_0}{ \eps} + o ( \frac1\eps ) \, .
$$
This is where it is convenient to choose $\psi$ such that $\psi(5\gamma/8+r) = 1 - \psi (5\gamma/8-r)$
since in this case, $\max(\psi_d,1-\psi_d) \ge 1/2$ and we obtain
\begin{eqnarray*}
\partial_t U^{\eps,\beta} + \frac1{\eps \eta} 
\left(f(U^{\eps,\beta}) - \mathcal{I}^\eps_\alpha U^{\eps,\beta} \right) 
\le  C \beta \eta  -   \frac\beta{4 \eps} 
+ \frac{\gamma_0}{ \eps} + o\left(\frac1\eps \right) \; .  
\end{eqnarray*}
Choosing now $\gamma_0$ small enough, we finally get
\begin{eqnarray*}
\partial_t U^{\eps,\beta} + \frac1{\eps \eta} 
\left(f(U^{\eps,\beta}) - \mathcal{I}^\eps_\alpha U^{\eps,\beta} \right) 
\le  C(\psi) \beta\eta  -  \frac\beta{8 \eps} \: .
\end{eqnarray*}
It is now clear that for $\eps$ small enough, $U^{\eps,\beta}$ is a subsolution of \eqref{eq:rd}
in $[t_0,t_0+h] \times \R^N$. 
\end{proof}
%--------------

\section{Proof of Lemma~\ref{lem:a-eps-conv}}
\label{sec:lem}

This section is devoted to the study of the average of oscillating
terms. Their behaviour as $\eps \to 0$ was given by
Lemma~\ref{lem:a-eps-conv} whose proof was postponed. 
We first deal with the singular case with $\alpha >1$.
We next prove the result in the regular case.  
We then state the equivalent lemma for the case $\alpha <1$
since ideas will be used in the case $\alpha =1$.
We finally prove Lemma~\ref{lem:a-eps-conv} in the case $\alpha =1$.  

%------------
\begin{proof}[Proof of Lemma~\ref{lem:a-eps-conv} in the singular case for $\alpha >1$.]
We first recall the definition of $\bar{a}_\eps$ and $W$. For the sake of clarity,
we do not write $e$ and $h$ variables of $q$ since they play no role in the present argument.
\begin{multline*}
\bar{a}_\eps (e,t,x)  =  \int \dot{q} (\xi) a_\eps (\xi,e,t,x) d\xi \\
 =  \frac1\eps \iint \dot{q} (\xi) \bigg\{  q ( \xi + e \cdot z + \eps W (t,x,z))
-  q ( \xi + e \cdot z)  \bigg\} J(z) d\xi dz  
\end{multline*}
with
$$
W (t,x,z) = 
\frac1{\eps^2} [ d(t,x+\eps z) - d (t,x) - \eps D d(t,x) \cdot z ] \, .
$$
We proceed  in several steps. \medskip

\noindent \textbf{Step 1: reduction to the study of the 
singular integral around the origin for quadratic $W$'s.}
Let us choose $r_\eps$ such that we also have (see \eqref{cond1:J})
$$
\frac1\eps \int_{|z| \ge r_\eps} J (z) dz \to 0 \quad \text{ as } \eps \to 0\; .
$$
For instance we consider $r_\eps = \eps^{-\beta}$ with $\beta > 1/\alpha$. 
In view of Condition~\eqref{cond1:J}, we thus can assume from now on that
$$
J(z) = g (\hat z) \frac1{|z|^{N+\alpha}} \, .
$$
Since $q$ is bounded, it is therefore enough to study the convergence of 
\begin{multline*}
b_\eps (e,t,x)   \\
 = \int \int_{|z| \le r_\eps}  \dot{q} (\xi) 
\frac1\eps  \bigg\{  q (\xi + e \cdot z+ \eps W (t, x,z), e )
    -q ( \xi + e \cdot z)  \bigg\} J(z) d\xi dz  \; .
\end{multline*}

For $|z| \le r_\eps$,
$$
W(t,x,z) \to \frac12 D^2 d (t,x) z \cdot z \quad \text{ as } \eps \to 0  
$$
as soon as one chooses $r_\eps$ such that $\eps r_\eps \to 0$. Hence we take
$\beta \in (\alpha^{-1},1)$. If $B$ denotes $\frac12 D^2 d (t,x)$,
we have for $|z| \le r_\delta$,
$$
 |W(t,x,z) - B z \cdot z | \le \delta |z|^2 \, .
$$
By using the monotonicity of $q$, we  thus can reduce the study of $b_\eps$ to the study of 
$$
c_\eps = \int \int_{|z| \le r_\eps}  \dot{q} (\xi) \frac1\eps  \bigg\{  q (\xi + e\cdot z + \eps C z \cdot z)
    -q ( \xi + e \cdot z)  \bigg\} J(z) d\xi dz  \; .
$$
\medskip

\noindent \textbf{Step 2: integrating by parts.} By using a system of coordinates where $z_1 = e\cdot z$
and $z=(z_1,z')$, we can decompose the matrix $C$ as follows
$$
C = \left[ \begin{array}{ll} c_1& v^*\\ v& C'\end{array} \right]
$$
Hence, we can write
\begin{eqnarray*}
c_\eps & =&  \int \int_{|z| \le r_\eps}  \int_0^1 \dot{q} (\xi) \dot{q} (\xi + z_1+ \eps \tau C z \cdot z)
(Cz \cdot z) J(z_1,z') d\xi dz_1 dz' d\tau \\
 & =&  \int \int_{|z| \le r_\eps}  \int_0^1 \dot{q} (\xi) \dot{q} (\xi + z_1+ \eps \tau C z \cdot z)
(Cz \cdot z) J(z_1,z') d\xi dz_1 dz' d\tau \\
&=&\int \int_{|z| \le r_\eps}  \int_0^1 \dot{q} (\xi) \partial_{z_1} 
\bigg\{ q (\xi + z_1+ \eps \tau C z \cdot z) -q(\xi) \bigg\}  \\
&& \times
\frac{Cz \cdot z}{1 + 2 \eps \tau ( c_1 z_1 + v\cdot z')} J(z_1,z') d\xi dz_1 dz' d\tau \\
\end{eqnarray*}
We now integrate by parts with respect to $z_1$. 
\begin{eqnarray*}
c_\eps & = & - \int \int_{|z| \le r_\eps}  \int_0^1 \dot{q} (\xi)
\bigg\{ q (\xi + z_1+ \eps \tau C z \cdot z)\bigg\} \\
&& \quad \times  \partial_{z_1}\bigg\{ \frac{Cz \cdot z}{1 + 2 \eps \tau (c_1 z_1+ v \cdot z')} J(z_1,z') \bigg\} 
 d\xi dz_1 dz' d\tau + (BT)_+^1 - (BT)_-^1 
\end{eqnarray*}
with
\begin{multline*}
(BT)_\pm^1 = \int \int_{|z'| \le r_\eps}  \int_0^1 \dot{q} (\xi)
\bigg\{ q (\xi  \pm \sqrt{r_\eps^2 -|z'|^2} \\ + \eps \tau C (\pm \sqrt{r_\eps^2 -|z'|^2}, z')\cdot 
(\pm \sqrt{r_\eps^2 -|z'|^2}, z'))\bigg\} \\
\times  \bigg\{ \frac{ C (\pm \sqrt{r_\eps^2 -|z'|^2}, z')\cdot 
(\pm \sqrt{r_\eps^2 -|z'|^2}, z')}{1 + 2 \eps \tau ( \pm c_1\sqrt{r_\eps^2 -|z'|^2} + v \cdot z')} 
J(\pm \sqrt{r_\eps^2 -|z'|^2},z') \bigg\}  d\xi  dz' d\tau \, .
\end{multline*}
We next integrate by parts with respect to $\xi$. 
\begin{eqnarray*}
c_\eps & = & \int \int_{|z| \le r_\eps}  \int_0^1 q (\xi)
\bigg\{ \dot{q} (\xi + z_1+ \eps \tau C z \cdot z)\bigg\} \\
&&  \times  \partial_{z_1}\bigg\{ \frac{Cz \cdot z}{1 + 2 \eps \tau (c_1 z_1 + v\cdot z')} J(z_1,z') \bigg\} 
 d\xi dz_1 dz' d\tau \\ 
&& + (BT)_+^1 - (BT)_-^1 \\
& =&  \int \int_{|z| \le r_\eps}  \int_0^1 q (\xi)
\partial_{z_1} \bigg\{ q (\xi + z_1+ \eps \tau C z \cdot z) -q (\xi) \bigg\} \\
&&  \times  \frac1{1+2\eps \tau (c_1 z_1+v\cdot z')}
\partial_{z_1}\bigg\{ \frac{Cz \cdot z}{1 + 2 \eps \tau (c_1 z_1 + v\cdot z')} J(z_1,z') \bigg\} 
 d\xi dz_1 dz' d\tau \\
&& + (BT)_+^1 - (BT)_-^1 
\end{eqnarray*}
We finally integrate by parts in $z_1$ and we get
\begin{equation}\label{eq:cdeps}
c_\eps = d_\eps + (BT)_+^1 - (BT)_-^1 + (BT)_+^2 - (BT)_-^2 
\end{equation}
with 
\begin{multline*}
d_\eps  =  -\int \int_{|z| \le r_\eps}  \int_0^1 q (\xi)
\bigg\{ q (\xi + z_1+ \eps \tau C z \cdot z) -q (\xi) \bigg\} \\
 \times  \partial_{z_1}\bigg\{ \frac1{1+2\eps \tau (c_1 z_1+v\cdot z')} 
 \partial_{z_1}\bigg\{ \frac{Cz \cdot z}{1 + 2 \eps \tau (c_1 z_1 + v\cdot z')} J(z_1,z') \bigg\} \bigg\}
 d\xi dz_1 dz' d\tau 
\end{multline*}
\begin{eqnarray*}
(BT)^2_\pm &=& \int \int_{|z'| \le r_\eps}  \int_0^1  q (\xi)  
\bigg\{ q (\xi \pm \sqrt{r_\eps^2 - |z'|^2} \\
&& + \eps \tau (\pm\sqrt{r_\eps^2 -|z'|^2},z') \cdot (\pm\sqrt{r_\eps^2 -|z'|^2},z')) - q(\xi) \bigg\} \\
&& 
 \times \frac1{1 + 2 \eps \tau (\pm c_1\sqrt{r_\eps^2 -|z'|^2}+ v\cdot z')} \\
&&\partial_{z_1}\bigg\{ \frac{Cz \cdot z}{1 + 2 \eps \tau ( c_1 (\cdot)+ v \cdot z') } 
J(\cdot,z') \bigg\} (\pm \sqrt{r_\eps^2 -|z'|^2})  d\xi  dz' d\tau \, .
\end{eqnarray*}
We now study the limits of all terms in \eqref{eq:cdeps}.
\medskip

\noindent \textbf{Step 3: study of boundary terms.}
We start with $(BT)^1_\pm$. 
\begin{eqnarray*}
|(BT)^1_\pm| &\le& 2 (m_+(h) - m_-(h)) \| q\|_\infty \|g\|_\infty \int_{|z'| \le r_\eps}  
2 r_\eps^2 \frac{dz'}{r_\eps^{N+\alpha}} \\
 & \le & C r_\eps^{1-\alpha}   
\end{eqnarray*}
and this goes to $0$ as $\eps \to 0$. 

We now turn to $(BT)^2_\pm$. It is convenient to introduce the function 
$$
\Gamma (\tau,z_1) = (1 + 2 \eps \tau (c_1 z_1 + v\cdot z'))^{-1} \, .
$$
Since $\eps r_\eps \to 0$ as $\eps \to 0$, we deduce that for 
$\eps$ small enough, we have
\begin{eqnarray*}
| \Gamma (\tau,z_1) | & \le & 2  \\ 
| \partial_{z_1} \Gamma (\tau,z_1) | &\le& 8 |c_1| \eps \\
| \partial_{z_1} (\Gamma^2) (\tau,z_1) | & \le & 32 |c_1| \eps \\
|\partial^2_{z_1,z_1} \Gamma (\tau, z_1) | & \le & 64 |c_1|^2 \eps^2 \, .
\end{eqnarray*}
We next compute
\begin{multline*}
\partial_{z_1} (( Cz \cdot z)\Gamma (z_1) J (z_1,z')) 
\\ = 2 c_1 z_1 \Gamma J + (c_1z_1^2 + C' z' \cdot z') (\partial_{z_1} \Gamma) J   
+ (c_1 z_1^2 + C' z'\cdot z') \Gamma \partial_{z_1} J \,.
\end{multline*}
Now, since $J (z) = g(\hat{z}) |z|^{-N -\alpha}$, we deduce that
$$
|\partial_{z_1} J (z) | \le \frac{C}{|z|^{N+\alpha +1}} \, .
$$
We thus conclude that for $|z'| \le r_\eps$ and $z_1$ such that $|z|=r_\eps$, we have
$$
| \partial_{z_1} (( Cz \cdot z)\Gamma J )| \le C
\frac{r_\eps}{r_\eps^{N+\alpha}} + C \frac{\eps
  r_\eps^2}{r_\eps^{N+\alpha}} + C \frac{r_\eps^2}{r_\eps^{N+\alpha
    +1}} \le C r_\eps^{-N - \alpha}
$$
and we get 
$$
\int_{|z'| \le r_\eps, z_1^2 + |z'|^2 = r_\eps^2}| \partial_{z_1} (( Cz \cdot z)\Gamma (\tau,z_1) J (z_1,z'))| 
dz' \le C r_\eps^{-\alpha} \, .
$$
With this inequality in hand, we now derive 
\begin{eqnarray*}
G(-Cr_\eps)&=&- C r_\eps^{-\alpha} \int  (q(\xi) - q (\xi - Cr_\eps)) d\xi \\
&\le&  (BT)^2_\pm \le C r_\eps^{-\alpha} \int  (q (\xi + Cr_\eps)- q(\xi)) d\xi = G (C r_\eps) 
\end{eqnarray*}
where
$$
G(r) = \int (q(\xi+ r) -q (\xi)) d\xi \, . 
$$
It is clear  that $G$ is Lipschitz continuous and equals $0$ at $0$. Hence
$$
|(BT)^2_\pm | \le C r_\eps^{1-\alpha} \, .
$$
It thus goes to $0$ as $\eps \to 0$. 
\medskip

\noindent \textbf{Step 4: study of $d_\eps$.}
In order to study the main term $d_\eps$, we first write it as follows
$$
d_\eps = e_\eps + R_\eps 
$$
with 
\begin{eqnarray*}
e_\eps &=&  -\int \int_{|z| \le r_\eps}  \int_0^1 q (\xi)
\bigg\{ q (\xi + z_1+ \eps \tau C z \cdot z) -q (\xi)  \bigg\} \\ && \quad \times  
\Gamma^2 (\tau,z_1) \partial^2_{z_1,z_1}\bigg\{ (Cz \cdot z) J(z_1,z') \bigg\} 
 d\xi dz_1 dz' d\tau\\
R_\eps &=&  -\int \int_{|z| \le r_\eps}  \int_0^1 q (\xi)
\bigg\{ q (\xi + z_1+ \eps \tau C z \cdot z) -q (\xi)  \bigg\} \\ && \quad \times  
\bigg[ \bigg\{ (\partial_{z_1}\Gamma)^2 + \Gamma \partial_{z_1,z_1}^2 \Gamma \bigg\} 
\bigg\{ (Cz \cdot z) J(z_1,z') \bigg\} \\
&& \quad + \partial_{z_1} (\Gamma^2)
\partial_{z_1} \bigg\{ (Cz \cdot z) J(z_1,z') \bigg\} \bigg] d\xi dz_1 dz' d\tau \, .
\end{eqnarray*}
Let us now prove that $R_\eps$ goes to $0$ as $\eps \to 0$. We proceed as we did with $(BT)_\pm^2$. 
We first estimate the quantity $[ \dots ]$ in the definition of $R_\eps$. We use the estimates on $\Gamma$ 
and its derivatives, together with the estimate of $\partial_{z_1} J$. We obtain for 
$|z| \le r_\eps$, 
\begin{eqnarray*}
| [ \dots ] | & \le & C \frac{\eps^2}{|z|^{N+\alpha -2}} + C \frac{\eps}{|z|^{N+\alpha -1}} \le \frac{C\eps}{|z|^{N+\alpha-1}}
\end{eqnarray*}
since $\eps |z| \le \eps r_\eps \to 0$. By arguing as for $(BT)_\pm^2$, we conclude that
$$
|R_\eps | \le C \eps G ( C r_\eps) \int_{|z| \le r_\eps} |z|^{-N-\alpha +1} dz \le C (\eps r_\eps) r_\eps^{1-\alpha} 
$$
and the right hand side of the previous inequality goes to $0$ as $\eps \to 0$. 

It remains to study the limit of $e_\eps$. By dominated convergence theorem, we obtain that
it converges towards
\begin{eqnarray*}
e_0 
& =& -\int_{\R_\xi} q(\xi) \int_{\R_{z_1}} (q(\xi + z_1) - q(\xi)) \\
&  & \times \partial_{z_1,z_1}^2 \left\{ \int_{z'} (Cz \cdot z) 
g (\hat{z}) \frac{dz'}{(|z_1^2 + |z'|^2)^{(N+\alpha) /2}}\right\}
d\xi dz_1 \\ 
& =& -\int_{\R_\xi} q(\xi) \int_{\R_{z_1}} (q(\xi + z_1) - q(\xi)) 
 \partial_{z_1,z_1}^2 \left\{ \frac{|z_1|^{1-\alpha}}{\alpha(\alpha -1)}\mathrm{Tr} (A_g (e) C) \right\} d\xi dz_1  \\
& =& -  \int_{\R_\xi} q(\xi) \int_{\R_{z_1}} (q(\xi + z_1) - q(\xi)) \frac{dz_1}{|z_1|^\alpha }d \xi
 \bigg( \mathrm{Tr} (A_g (e) C) \bigg) \\
& = & \iint (q(\xi+ z_1) - q(\xi))^2 \frac{dz_1 d\xi}{|z_1|^{1+\alpha}} 
\bigg( \mathrm{Tr} (A_g (e) C) \bigg)
\end{eqnarray*}
where $A_g (e)$ is defined in \eqref{def:ag}.  The proof is now complete. 
\end{proof}
%---------
\begin{proof}[Proof of Lemma~\ref{lem:a-eps-conv} in the regular case.]
The proof of the lemma in this case is divided into two steps. 

\noindent \textbf{Step 1: reduction to the study of the 
singular integral around the origin for quadratic $W$'s.}
As explained in the proof of Lemma~\ref{lem:a-eps-conv} for $J (z) = g(\hat{z}) |z|^{-N-\alpha}$ and $\alpha >1$,
it is enough to study the convergence of 
\begin{multline*}
b_\eps (e,t,x)  =  \int \int_{|z| \le r_\eps} 
\dot{q} (\xi) a_\eps (\xi,e,t,x) d\xi dz \\
 = \int \int_{|z| \le r_\eps}  \dot{q} (\xi) 
\frac1\eps  \bigg\{  q (\xi + e \cdot z+ \eps W (t, x,z), e )
    -q ( \xi + e \cdot z)  \bigg\} J(z) d\xi dz  
\end{multline*}
where we recall that $r_\eps = \eps^{-\beta}$ with $\beta > 1/\alpha$. 

Remark that there exists $C_R>0$ such that for any $(t,x) \in B_R$ and $z \in B$, 
\begin{equation}\label{eq:estimW}
|W(t,x,z)| \le C_W |z|^2  
\end{equation}
and that, for $|z| \le r_\eps$,
$$
W(t,x,z) \to \frac12 D^2 d (t,x) z \cdot z \quad \text{ as } \eps \to 0  
$$
as soon as one chooses $r_\eps$ such that $\eps r_\eps \to 0$. 
We conclude that the integrand of $b_\eps$ converges towards
$$ 
\frac12 \dot{q^0} (\xi) \dot{q^0} ( \xi + e \cdot z) ( D^2 d (t,x) z\cdot z ) \; J(z) \, .
$$
This explains why we expect the limit of $\bar{a}_\eps$ to be given by \eqref{eq:matrix-rd>1bis}.

We can apply dominated convergence theorem outside the unit ball $B$. Hence, we reduce
the study of the limit of $b_\eps$ to the one of
\begin{multline*}
c_\eps (e,t,x) \\ =  \int \int_{|z| \le 1} \dot{q} (\xi) 
\frac1\eps  \bigg\{  q (\xi + e \cdot z+ \eps W (t, x,z), e )
    -q ( \xi + e \cdot z)  \bigg\} J(z) d\xi dz  \; .
\end{multline*}
In order to do so, we introduce
\begin{eqnarray*} 
c^\pm_\eps  &=&\int \int_{|z| \le 1}  \dot{q} (\xi) 
\frac1\eps  \bigg\{  q (\xi + e \cdot z \pm \eps C_W |z|^2, e )
    -q ( \xi + e \cdot z)  \bigg\} J(z) d\xi dz  \; .
\end{eqnarray*}
We know from \eqref{eq:estimW} and the monotonicity property of $q$ that 
\begin{equation}\label{eq:gendarme}
c^-_\eps \le c_\eps  \le c^+_\eps 
\end{equation}
and it is thus enough to prove that integrals $c_\eps^\pm$ have limits
that are uniform with respect to $e,t,x$ to conclude. 
\medskip

\noindent \textbf{Step 2: integrating by parts.}
Recall that $|e|=1$ and let $z_1$ denote $e \cdot z$ and $z=(z_1,z')$. 
We now write
\begin{multline*}
c^+_\eps (e) \\= \int \int_{|z| \le 1}  \dot{q} (\xi) 
\frac1\eps  \bigg\{  q (\xi + z_1 + \eps C_W z_1^2 + \eps C_W |z'|^2, e )
    -q ( \xi+ z_1)  \bigg\} J(z) d\xi dz  \\
 =   C_W \int \int_{|z| \le 1} \int_0^1 \dot{q} (\xi)
\dot{q}  (\xi+ z_1 + \eps \tau C_W z_1^2 + \eps \tau C_W |z'|^2, e ) \; |z|^2 
J(z) \; d\xi dz d\tau \\
 =  C_W \int  \int_0^1 \int_{|z| \le 1} \dot{q} (\xi)
\partial_{z_1} \bigg (q  (\xi+ z_1 + \eps \tau C_W z_1^2 + \eps \tau C_W |z'|^2, e ) \bigg) \\
\times 
\frac{|z|^2 J(z)}{1 + 2 \eps \tau C_W z_1} \; d\xi d\tau dz' dz_1 \, .
\end{multline*}
We next integrate by parts and get 
$$
c^+_\eps  = d_\eps^+ + (BT)_+ + (BT)_-  \, .
$$
with 
\begin{eqnarray*}
d_\eps^+ 
 &=& -  C_W \int  \int_0^1 \int_{|z| \le 1} \dot{q} (\xi)
 q  (\xi+ z_1 + \eps \tau C_W z_1 + \eps \tau C_W |z'|^2, e ) \\
&& \times K (\tau,z) \; d\xi d\tau dz' dz_1  \\
(BT)_\pm & = & \pm C_W \int  \int_0^1 \int_{|z| \le 1} \dot{q} (\xi)
 q  (\xi \pm \sqrt{1 - |z'|^2} + \eps \tau C_W , e )  \\
&& \times
\frac{ J (\pm \sqrt{1 - |z'|^2}, z')}{1 \pm 2 \eps \tau C_W \sqrt{1 - |z'|^2}} \\
\end{eqnarray*}
where 
\begin{eqnarray*}
K (\tau, z) &=& \partial_{z_1} \bigg( \frac{|z|^2 J(z)}{1 + 2 \eps \tau C_W z_1} \bigg) \\
& = &  \frac{\partial_{z_1} (|z|^2 J(z))}{1 + 2 \eps \tau C_W z_1} 
- 2 \eps \tau C_W \frac{|z|^2 J(z)}{(1 + 2 \eps \tau C_W z_1)^2} \, . 
\end{eqnarray*}
After remarking that for $J \in L^1 \cap C^0_c$, we have
$$
| K (\tau,z )| \le C ( |z|^2 J (z) + |\nabla ( |z|^2 J (z) ) | ) \in L^1 (B) \, ,
$$
it is clear that we can apply dominated convergence theorem in each integral. 
The proof is now complete. 
\end{proof}
%-----------------
In the case $\alpha <1$, the ansatz used to treat the case $\alpha \ge
1$ yield oscillating terms with the following form. 
\begin{eqnarray*}
  a_\eps (\zeta,s,y) & =& \frac1{\eps^\alpha} \int \bigg( 
  q (\zeta + \frac{\phi(s,y+\eps z)- \phi (s,y)}\eps )-q (\zeta + e
  \cdot z) \bigg) \frac{dz}{|z|^{N+\alpha}} \, .
\end{eqnarray*}
Their average is thus defined as follows
$$
\bar{a}_\eps (s,y) =  \int a_\eps(r,s,y) \dot{q} (r) dr \, .
$$
Even if we are not able (yet!) to treat the case $\alpha <1$, we
think this can be of interest to explain what is the limit of the
average as $\eps \to 0$ in order to justify our conjecture about
the limit we expect in the case $\alpha <1$. Another reason for
including such a result is that its proof shares ideas with the
one corresponding one for the case $\alpha =1$.
%----------------------------------- 
\begin{lem}[Uniform convergence of approximate coefficients (II)] 
\label{lem:a-eps-conv-2}
Consider a smooth function $d: (0,+\infty) \times \R^N \to \R$ such 
that $|Dd (t,x)|=1$. Then, as $\eps \to 0$,
$$
\bar{a}_\eps (e,t,x) \to \kappa [x,d(t,\cdot)] 
$$
and the limit is uniform with respect to $(e,t,x) \in \S \times  Q_\gamma$. 
\end{lem}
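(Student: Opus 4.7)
I would rescale via $u = \eps z$. Since $\eps^{-\alpha}J(z)\,dz = g(\hat u)|u|^{-N-\alpha}\,du$, the prefactor $\eps^{-\alpha}$ in $a_\eps$ cancels exactly with the Jacobian; combined with $q(\zeta + A) - q(\zeta + B) = \int_B^A \dot q(\zeta + \sigma)\,d\sigma$ and Fubini, this gives
\begin{equation*}
\bar a_\eps(s,y) = \int_{\R^N} \Phi_\eps(u)\,\frac{g(\hat u)\,du}{|u|^{N+\alpha}},
\quad
\Phi_\eps(u) = \int_{B_\eps(u)}^{A_\eps(u)} H'(\sigma)\,d\sigma,
\end{equation*}
where $A_\eps(u) = [d(s,y+u) - d(s,y)]/\eps$, $B_\eps(u) = e\cdot u/\eps$, $e = Dd(s,y)$, and $H'(\sigma) = \int_\R \dot q(\zeta)\dot q(\zeta + \sigma)\,d\zeta$ is the auto-correlation of $\dot q$. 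The decay \eqref{eq:q-decay} propagates to $|\dot q(\zeta)| = O((1+|\zeta|)^{-(2+\alpha)})$, so $H' \in L^1(\R)$ with $\int_\R H' = (m_+ - m_-)^2$ and $H'(\sigma) = O(|\sigma|^{-(2+\alpha)})$ at infinity.

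Next I would identify the pointwise limit of $\Phi_\eps$. Outside the negligible set $\{e\cdot u = 0\} \cup \{d(s,y+u) = d(s,y)\}$, both $A_\eps(u)$ and $B_\eps(u)$ tend to $\pm\infty$, with signs matching those of $d(s,y+u) - d(s,y)$ and $e \cdot u$. A direct case analysis then yields $\Phi_\eps(u) \to (m_+ - m_-)^2 (\un_{A_+}(u) - \un_{A_-}(u))$, where $A_+$ and $A_-$ are exactly the sets defining $\kappa^*[y, d(s,\cdot)]$. The integrand therefore converges pointwise to the one defining $(m_+-m_-)^2\, \kappa[y, d(s,\cdot)]$, matching the stated limit up to a normalization prefactor.

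To conclude by dominated convergence I would produce an $\eps$-uniform integrable majorant in three regions. Two basic bounds are available: $|\Phi_\eps| \le \|H'\|_1 = (m_+-m_-)^2$, and, from $|A_\eps - B_\eps| \le \tfrac12\|D^2d\|_\infty |u|^2/\eps$, the Lipschitz bound $|\Phi_\eps(u)| \le C |u|^2/\eps$. Splitting at $|u| = \eps^\beta$ with $\beta \in (1/(2-\alpha), 1)$, which is admissible exactly because $\alpha < 1$, the Lipschitz bound handles $\{|u| \le \eps^\beta\}$ with a contribution of order $\eps^{\beta(2-\alpha)-1} \to 0$. For $|u|$ in an intermediate annulus away from $\{e\cdot u = 0\}$, $A_\eps$ and $B_\eps$ share a sign and both lie in a tail of $H'$, giving the sharper bound $|\Phi_\eps(u)| \le C \eps^{1+\alpha}/|u|^\alpha$, uniformly integrable against $|u|^{-N-\alpha}$; for $|u|$ bounded away from $0$ the bound $\|H'\|_1$ together with the integrability of $|u|^{-N-\alpha}$ at infinity suffices.

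The main obstacle is the thin boundary-layer region near the hyperplane $\{e \cdot u = 0\}$, where $B_\eps(u)$ fails to diverge and the sign-matching argument degenerates. I would control this by noting that the slab $\{|e \cdot u| \le \delta |u|\}$ inside $B_r$ carries weighted measure of order $\delta r^{-\alpha}$, so choosing $\delta$ shrinking to $0$ at a rate compatible with $\eps$ makes its contribution vanish. Uniformity of the limit over $(e,s,y) \in \S \times Q_\gamma$ then follows because every constant above depends only on $\|d\|_{C^2(Q_\gamma)}$ and on the universal constants from Assumption~\ref{assum:atw}.
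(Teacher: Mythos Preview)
Your rescaling $u=\eps z$, the representation $\Phi_\eps(u)=\int_{B_\eps}^{A_\eps}H'$ with $H'(\sigma)=\int\dot q(\zeta)\dot q(\zeta+\sigma)\,d\zeta$, and the identification of the pointwise limit are all correct and match the first step of the paper's proof. The difficulty, as you recognise, is entirely in controlling the contribution of $\{|u|\le\rho\}$, and here your sketch does not close.

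The problem is the boundary layer near $\{e\cdot u=0\}$. Your claim that the slab $\{|e\cdot u|\le\delta|u|\}$ ``inside $B_r$'' has weighted measure $O(\delta r^{-\alpha})$ is false as written: the radial integral $\int_0^r s^{-1-\alpha}\,ds$ diverges, so the slab inside $B_r$ has infinite weighted measure for every $\delta>0$. Interpreting $r$ as the inner radius of your intermediate annulus $[\eps^\beta,\rho]$ gives instead $O(\delta\,\eps^{-\alpha\beta})$, which forces $\delta=\delta(\eps)\to 0$. But your sharper bound $|\Phi_\eps|\le C\eps^{1+\alpha}/|u|^\alpha$ in the complementary cone $\{|e\cdot u|\ge\delta|u|\}$ hides a constant $C\sim\delta^{-(2+\alpha)}$ (coming from $\min(|A_\eps|,|B_\eps|)\gtrsim\delta|u|/\eps$). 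Balancing the two contributions imposes $\beta<(1+\alpha)/(\alpha(4+\alpha))$, which together with $\beta>1/(2-\alpha)$ has no solution once $\alpha\ge 1/2$. Replacing the conical slab by the parabolic one $\{|e\cdot u|\le K|u|^2\}$ helps (its weighted measure is $O(\rho^{1-\alpha})$, uniformly in $\eps$), but the estimate outside it still fails for $\alpha$ close to $1$. In short, direct domination of $\Phi_\eps$ does not yield an $\eps$-uniform integrable majorant on all of $\{|u|\le\rho\}$ when $\alpha\in(0,1)$ is not small.

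The paper circumvents this by first sandwiching $W$ between quadratics $\pm C_W|z|^2$ (using the monotonicity of $q$) and then \emph{integrating by parts in $z_1=e\cdot u$}. Writing the difference of $q$'s as $\int_0^1\dot q(\ldots)\,|z|^2\,d\tau$ and observing that $\dot q(\ldots)=\partial_{z_1}[q(\ldots)]\cdot\frac{\eps}{1+2\tau C_W z_1}$, one integrates by parts and transfers the $z_1$-derivative onto the kernel $|z|^2 J_\eps(z)/(1+2\tau C_W z_1)$. The resulting kernel $L_\eps$ is dominated by $C(|z|^2 J_\eps+|\nabla(|z|^2 J_\eps)|)\le C|z|^{-N-\alpha+1}$, which lies in $L^1(B)$ precisely because $\alpha<1$ (this is condition~\eqref{cond2:J}). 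After this step dominated convergence is immediate. The integration by parts is the essential idea your argument is missing.
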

%-----------------------------------
\begin{proof}[Proof of Lemma~\ref{lem:a-eps-conv-2}]
In the case $\alpha <1$, we first make a change of variables as follows
\begin{multline*}
\bar{a}_\eps (e,t,x)  =  \int \dot{q} (\xi) a_\eps (\xi,e,t,x) d\xi \\
= 
\frac1{\eps^\alpha} \iint \dot{q} (\xi) \bigg\{  q (\xi + \frac{d(t,x+\eps z)-d(t,x)}\eps , e )
-  q ( \xi + e \cdot z)  \bigg\} J(z) d\xi dz \\
 =  \iint \dot{q} (\xi) \bigg\{  q (\xi + \frac{ d(t,x +\bar{z})- d(t,x)}\eps  , e )
-  q ( \xi + \frac{e \cdot \bar{z}}\eps)  \bigg\} J_\eps(\bar{z}) d\xi d \bar{z} 
\end{multline*}
with
$$
J_\eps (\bar{z}) = \frac1{\eps^{N+\alpha}} J \left( \frac{\bar{z}}\eps \right) \, .
$$

\noindent \textbf{Step 1: reduction to the study of the singular integral around the origin
for quadratic $W$'s.}
Remark next that it is easy to pass to the limit in the integrand; indeed,
\begin{multline*}
 q (\xi + \frac{ d(t,x +\bar{z})- d(t,x)}\eps  , e )
-  q ( \xi + \frac{e \cdot \bar{z}}\eps) 
 \\ \to (\bar{m}_+ - \bar{m}_-) 
\left( \un_{(d (t,x+\cdot) > d(t,x), e \cdot (\cdot ) < 0)}
- \un_{(d (t,x+\cdot) < d(t,x), e \cdot (\cdot ) > 0)} \right)
\end{multline*}
and 
$$
J_\eps (z) \to g (\frac{z}{|z|}) \frac1{|z|^{N+\alpha}} \, .
$$
The difficulty is to deal with the singular measure. Hence, it is enough to study, 
as in the case $\alpha =1$, 
\begin{eqnarray*}
c_\eps^+  &=&  \int_{\R_\xi} \int_{| \bar{z}| \le \delta}  
\dot{q} (\xi) \bigg\{  q (\xi + \frac{z_1 + C_W z_1^2 + C_W |z'|^2}\eps  , e )
-  q ( \xi + \frac{z_1}\eps)  \bigg\} 
\\ && \times J_\eps(z_1,z') d\xi dz' dz_1  \\
& = & C_W \int_{\R_\xi} \int_{|\bar z| \le \delta} \int_0^1 
\dot{q} (\xi) \dot{q} (\xi + \frac{z_1 + C_W \tau z_1^2 + C_W \tau |z'|^2}\eps  , e )
\\ && \times\frac{ |z|^2 }\eps J_\eps (z_1,z')  d\xi dz d\tau \\
& = & C_W \int_{\R_\xi} \int_{|\bar z| \le \delta} \int_0^1 
\dot{q} (\xi) \partial_{z_1} 
\left( q (\xi + \frac{z_1 + C_W \tau z_1^2 + C_W \tau |z'|^2}\eps  , e ) \right)
\\ && \times \frac{ |z|^2 }{1 + 2 C_W \tau z_1} J_\eps (z)  d\xi dz d\tau 
\end{eqnarray*}
where $\bar{z} = z_1 e + z'$.
\medskip

\noindent \textbf{Step 2: integrating by parts.} By integrating by parts, we obtain
$$
c_\eps^+ = d_\eps^+ + (BT)_+ + (BT)_- 
$$
where
\begin{eqnarray*}
d_\eps^+ & =&  - C_W \int_{\R_\xi} \int_{|\bar z| \le \delta} \int_0^1 
\dot{q} (\xi) q (\xi + \frac{z_1 + C_W \tau z_1^2 + C_W \tau |z'|^2}\eps  , e )
\\ && \times L_\eps (\tau,z)
 d\xi dz d\tau \\
(BT)_\pm & = &  \pm C_W \int_{\R_\xi} \int_{|z'| \le \delta} \int_0^1 
\dot{q} (\xi) q (\xi + \frac{\pm \sqrt{\delta^2 -|z'|^2} + C_W \tau \delta^2}\eps  , e ) 
\\ && \times\delta^2 
J_\eps ( \pm \sqrt{\delta^2 -|z'|^2},z') dz'  d\xi d\tau \\
\end{eqnarray*}
with
\begin{eqnarray*}
L_\eps (\tau, z)  &=& \partial_{z_1} \left( \frac{ |z|^2 J_\eps (z) }{1 + 2 C_W \tau z_1} \right) \\
& =&  \frac{ \partial_{z_1} (|z|^2 J_\eps (z) )}{1 + 2 C_W \tau z_1}  
- 2 C_W \tau \frac{|z|^2 J_\eps (z) }{(1 + 2 C_W \tau z_1)^2} \, .
\end{eqnarray*}
Condition~\eqref{cond2:J} ensures that 
$$
| L_\eps (\tau,z) | \le C K (z) \in L^1 (B) 
$$
and dominated convergence can be used to prove the convergence of $d_\eps^+$.
As far as boundary terms are concerned, we use \eqref{cond1:J} to get a constant $C>0$ such that
$$
J_\eps (z) \le \frac{C}{|z|^{N+\alpha}}
$$
and this implies
$$
J_\eps  ( \pm \sqrt{\delta^2 -|z'|^2}, z') \le \frac{C}{\delta^{N+\alpha}} \, .
$$
Hence, dominated convergence can be applied to boundary terms too. The proof is now complete. 
\end{proof}
%--------
\begin{proof}[Proof of Lemma~\ref{lem:a-eps-conv} in the singular case for $\alpha =1$.]
The proof is divided in several steps. 

\noindent \textbf{Step 1: reduction to the study of the singular integral around the origin
for quadratic $W$'s.} Let us fix $\delta >0$. There exists $r_\delta >0$ such 
that for any $\bar{z} \in B_{r_\delta}$, 
$$
| d(t,x+\bar{z}) - d(t,x) - Dd(t,x) \cdot \bar{z} - \frac12 D^2 d(t,x) \bar{z} \cdot \bar{z}| \le \delta|\bar{z}|^2 \; . 
$$
Consequently, for $z$ such that $|z| \le  r_\delta \eps^{-1}=:R_\eps$, we have
\begin{equation}\label{eq:estimWbis}
|W(t,x,z) - \frac12 Bz \cdot z | \le \delta |z|^2 \; .
\end{equation}
We remark next that 
\begin{eqnarray*}
\int_{|z| \le 1 }  |z|^2 J(z) dz &=&O(1) = o ( |\ln \eps|) \, , \\
\int_{|z| \ge R_\eps} J (z) dz &=& O (\eps) = o (\eps |\ln \eps|) \, .
\end{eqnarray*}
It is therefore enough to study the convergence of
\begin{multline*}
b_\eps \\ = \frac{1}{\eps |\ln \eps |} 
\iint_{1 \le |z| \le R_\eps} \dot{q} (\xi) [q (\xi + e \cdot z +\eps W(t,x,z)) - q (\xi + e\cdot z)] J(z) 
dz d\xi \; .
\end{multline*}
By using \eqref{eq:estimWbis} together with the monotonicity of $q$, it is even enough to study the 
convergence of 
\begin{multline*}
c_\eps   = \\ \frac{1}{\eps |\ln \eps |} 
\iint_{1 \le |z| \le R_\eps} 
\dot{q} (\xi) [q (\xi + z_1 + \eps c_1 z_1^2 + \eps C' z' \cdot z') - q (\xi + e\cdot z)] J(z) dz d\xi 
\end{multline*}
for any $(N-1) \times (N-1)$ symmetric matrix $C$ and any constant $c_1 \in \R$. 
Precisely, we next prove that, as $\eps \to 0$, 
$$ 
c_\eps  \to 2 (\bar{m}_+-\bar{m}_-)^2 \int_{\mathbb{S}^{N-2}}
g(\theta) \; C'\theta \cdot \theta \;  \sigma (d\theta) 
$$
where $g$ appears in \eqref{cond1:J}. 
\medskip

\noindent \textbf{Step 2: change of variables and domain decomposition.}
We now introduce the function $F_q$ defined as follows
$$
F_q (a) = \int \dot{q} (\xi) q (\xi + a) d\xi \, .
$$
We remark that $F_q$ is bounded, non-decreasing, Lipschitz continuous and sastisfies
$$
F_q (a) \to \left\{ \begin{array}{ll} \bar{m}_+ (\bar{m}_+ - \bar{m}_-) 
& \text{as } a \to + \infty \\ \bar{m}_-  (\bar{m}_+ - \bar{m}_-) &
\text{as } a \to - \infty \end{array} \right.
$$
and 
$$
\| F_q ' \|_\infty \le \left(\int \dot{q}^2 (\xi) d\xi\right)^{1/2} \, .
$$
We now rewrite $c_\eps$ with this new function
$$
c_\eps = \frac{1}{\eps |\ln \eps | } \int_{1 \le |z| \le R_\eps} 
\bigg\{ F_q (z_1 + \eps c_1 z_1^2 + \eps C' z' \cdot z') - F_q (z_1) \bigg\} J (z) dz
$$
Through the change of variables $z' = \eps^{-1} r \theta$ and $z_1 = \eps^{-1} r t_1$, we get
\begin{multline*}
c_\eps = \frac1{|\ln \eps |} \int_{\theta \in \mathbb{S}^{N-2}} d \sigma (\theta)
\int_0^{r_\delta} \frac{dr}{r^2} \int_{ \eps^2 r^{-2} \le 1 + t_1^2 \le r_\delta^2 r^{-2}} dt_1 \\
\times \bigg\{ F_q \left( \frac{r}\eps (t_1 + r c_1 t_1^2 + r C'\theta \cdot \theta) \right) 
- F_q \left( \frac{r}\eps t_1 \right)\bigg\}  
g \left( \frac{(t_1, \theta)}{\sqrt{t_1^2 + 1}} \right)  \frac1{(1+t_1^2)^{(N+1)/2}} \, . 
\end{multline*}
Without loss of generality, we assumed here  for simplicity
that $$J (z) = g (z /|z|)|z|^{-N-1}$$ (see \eqref{cond2:J}). 
We next cut it into two pieces as follows
$$
c_\eps = d_\eps + R_\eps^1
$$
with 
\begin{eqnarray*}
d_\eps & = &\frac1{|\ln \eps |} \int_{\theta \in \mathbb{S}^{N-2}}  d \sigma (\theta)
\int_{\Theta \eps}^{r_\delta} \frac{dr}{r^2} \int_{ \eps^2 r^{-2} \le 1 + t_1^2 \le r_\delta^2 r^{-2}} dt_1 \\
&& \times \bigg\{ F_q (\dots) - F_q ( \dots)\bigg\} g (\dots)  \frac1{(1+t_1^2)^{(N+1)/2}} \, .\\
R_\eps^1 & = &\frac1{ |\ln \eps |} \int_{\theta \in \mathbb{S}^{N-2}}  d \sigma (\theta)
\int_0^{\Theta \eps} \frac{dr}{r^2} \int_{ \eps^2 r^{-2} \le 1 + t_1^2 \le r_\delta^2 r^{-2}} dt_1 \\
&& \times \bigg\{ F_q (\dots) - F_q ( \dots)\bigg\} g (\dots)  \frac1{(1+t_1^2)^{(N+1)/2}} 
\end{eqnarray*}
for some $\Theta >0$ to be fixed later. 
\medskip

\noindent \textbf{Step 3: study of $R_\eps^1$.}
We now use the fact that $F_q$ is Lispchitz continuous in order to get,
for $\Theta$ large enough,
\begin{eqnarray*}
| R_\eps^1 | & \le & \| g \|_{\infty} \| F_q'\|_\infty (|c_1| + |C'|)
|\mathbb{S}^{N-2}| \frac1{ |\ln \eps|}
\int_0^{\Theta \eps} dr  \\ 
& & \times \int_{ \eps^2 r^{-2} \le 1 + t_1^2 \le r_\delta^2 r^{-2}} dt_1 
\frac1\eps \frac1{(1+t_1^2)^{(N-1)/2}} \\
& \le & C \frac1{\eps |\ln \eps|}\int_0^{\Theta \eps} dr 
 \int_{ 1 \le |t_1| \le r_\delta r^{-1 }} dt_1 \, t_1^{-N+1} \\
& \le & C_\delta \frac1{\eps |\ln \eps|} (\eps - \eps^{N-1} ) \le \frac{C_\delta}{|\ln \eps|} \, .
\end{eqnarray*}
Hence
$$
R_\eps^1 \to 0 \text{ as } \eps \to 0 \, .
$$

\noindent \textbf{Step 4: study of $d_\eps$.}
First, we rewrite $d_\eps$ (for $\Theta \ge 1$ large enough) as follows
\begin{multline*}
d_\eps  = \frac1{|\ln \eps |} \int_{\theta \in \mathbb{S}^{N-2}}  d \sigma (\theta)
\int_{\Theta \eps}^{r_\delta} \frac{dr}{r^2} \\ \times \int_{ |t_1| \le \sqrt{r_\delta^2 r^{-2} -1}}
\bigg\{ F_q (\dots) - F_q ( \dots)\bigg\} g (\dots)  \frac1{(1+t_1^2)^{(N+1)/2}}  \, .
\end{multline*}
We now study the limit of the integrand of $d_\eps$. In view of the limits of $F_q(a)$ when 
$a \to \pm \infty$, this limit is not $0$ if $t_1$ satisfies
$$
\left\{ \begin{array}{l} t_1 \ge 0 \\ t_1 + c_1 r t_1^2 + r c_\theta \le 0 \end{array} \right.
\quad \text{ or } \quad 
\left\{ \begin{array}{l} t_1 \le 0 \\ t_1 + c_1 r t_1^2 + r c_\theta \ge 0 \end{array} \right.
$$
where $c_\theta = C' \theta \cdot \theta$. This is equivalent to
$$
0  \le t_1 \le \mathcal{T}_1 (r) 
\quad \text{ or } \quad 
\left\{ \begin{array}{l} t_1 \le -\frac1{2c_1 r} (1 + \sqrt{1 - 4 c_1 c_\theta r^2}) \simeq - \frac1{c_1 r}  \\
\text{or } -c_\theta r \le t_1 \le 0 
 \end{array} \right.
$$
with
$$
\mathcal{T}_1 (r) = \frac1{2 c_1 r} (\sqrt{1 - 4c_1 c_\theta r^2}- 1)  \simeq - c_\theta r \, .
$$
We use now the fact that $|t_1| \le \sqrt{r_\delta^2 r^{-2} -1}$ to get 
$$
0  \le t_1 \le \mathcal{T}_1 (r) \quad \text{ or } \quad  \mathcal{T}_1 (r) \le t_1 \le 0  \, .
$$
We next cut $d_\eps$ into pieces as follows
$$
d_\eps = e_\eps^+ + e_\eps^- + R_\eps^2
$$
with
\begin{eqnarray*}
e_\eps^\pm  &=& \frac1{|\ln \eps |} \int_{\theta \in \mathbb{S}^{N-2}}  d \sigma (\theta)
\int_{\Theta \eps}^{r_\delta} \frac{dr}{r^2} \int_{ 0 \le \pm t_1 \le |\mathcal{T}_1 (r)|} \\
&& \times
\bigg\{ F_q (\dots) - F_q ( \dots)\bigg\} g (\dots)  \frac1{(1+t_1^2)^{(N+1)/2}} \\
R_\eps^2 &=&\frac1{|\ln \eps |} \int_{\theta \in \mathbb{S}^{N-2}}  d \sigma (\theta)
\int_{\Theta \eps}^{r_\delta} \frac{dr}{r^2} \int_{ |\mathcal{T}_1 (r)| \le  |t_1| \le \sqrt{r_\delta^2 r^{-2} -1}} \\
&& \times \bigg\{ F_q (\dots) - F_q ( \dots)\bigg\}  g (\dots)  \frac1{(1+t_1^2)^{(N+1)/2}} \, .
\end{eqnarray*}

\noindent \textbf{Step 5: study of $e_\eps^\pm$.}
Since $r \le r_\delta$ and $r_\delta$ very small, we can replace $e_\eps^\pm$ with
\begin{eqnarray*}
f_\eps^\pm  &=& \frac1{|\ln \eps |} \int_{\theta \in \mathbb{S}^{N-2}}  d \sigma (\theta)
\int_{\Theta \eps}^{r_\delta} \frac{dr}{r^2} \int_{ 0 \le \pm t_1 \le |\mathcal{T}_1 (r)| } dz_1 \\
&& \quad \times \bigg\{ F_q \left( \frac{r}\eps (t_1 + r c_1 t_1^2 + r C'\theta \cdot \theta) \right)  
- F_q \left( \frac{r}\eps t_1  \right)\bigg\} g (0, \theta) \\
 &=& \frac1{|\ln \eps |} \int_{\theta \in \mathbb{S}^{N-2}}  d \sigma (\theta)
\int_{\Theta \eps}^{r_\delta} \frac{dr}{r} \int_{ 0 \le \pm z_1 \le  r^{-1}  |\mathcal{T}_1 (r)| } dz_1 \\
&& \quad \times  \bigg\{ F_q \left( \frac1\eps (z_1 + r c_1 z_1^2 + r^2 C'\theta \cdot \theta) \right)  
- F_q \left( \frac{z_1}\eps   \right)\bigg\} g (0, \theta)
\end{eqnarray*}
Now it is easy to conclude that 
\begin{multline*}
f_\eps^\pm \to \int_{\theta \in \mathbb{S}^{N-2}} ((c_\theta)^+ + (c_\theta)^-) g(0,\theta) 
d \sigma (\theta) (F_{q^0} (+\infty) - F_{q^0} (-\infty)) \\
= (\bar{m}_+ - \bar{m}_-)^2 \int_{\theta \in \mathbb{S}^{N-2}} (C \theta \cdot \theta) g(0,\theta) 
d \sigma (\theta) \, .
\end{multline*}
It remains to prove that $R^2_\eps$ converges towards $0$. 
\medskip

\noindent \textbf{Step 6: study of $R^2_\eps$.}
By the study we made in  Step~4, we conclude that the integrand of $R^2_\eps$ converges
towards $0$. To dominate convergence, we simply write
\begin{eqnarray*}
|R^2_\eps| &\le &\frac1{|\ln \eps|} |\mathbb{S}^{N-2}| \int_{\Theta \eps}^{r_\delta} \frac{dr}r \int_{ |z_1| \ge |c_\theta| /2 }
2 \|F_q\|_\infty \frac1{(1+t_1^2)^{(N+1)/2}} \\
& \le &
|\mathbb{S}^{N-2}| \frac{\ln (r_\delta) + |\ln \eps |}{|\ln \eps |} \int_{ |z_1| \ge |c_\theta| /2 }
2 \|F_q\|_\infty \frac1{(1+t_1^2)^{(N+1)/2}} \, .
\end{eqnarray*}
The proof of the lemma is now complete.
\end{proof}

\appendix

\section{Link between \eqref{eq:matrix-rd>1} and \eqref{eq:matrix-rd>1bis} }

In this section, we  explain the link between the two first formulae in 
\eqref{eq:matrix-rd>1} and \eqref{eq:matrix-rd>1bis} by giving a formal argument.

If $z_1$ denotes $e \cdot z$ and $z' = z - (e\cdot z)e \in \R^{N-1}$, then
this equation can be written as follows
\begin{multline*}
A (e) = \\ \int_{\R_\xi} d\xi \dot{q^0} (\xi) \int_{\R_{z_1}} dz_1 \dot{q^0} (\xi + z_1) 
\int_{z' \in \R^{N-1}} dz' \left[ \begin{array}{ll} z_1^2 & z_1 z'\\ z_1 (z')^* & z' \otimes z' \end{array} \right] J(z_1,z')  
\end{multline*}
We remark next that 
$$
\int_{z' \in \R^{N-1}} \left[ \begin{array}{ll} z_1^2 & z_1 z'\\ z_1 (z')^* & z' \otimes z' \end{array} \right] J(z_1,z')   dz'
= \frac1{\alpha(\alpha-1)}A_g (e) |z_1|^{1-\alpha} \, .
$$
Hence, $A (e) = K A_g (e)$ with
\begin{equation}\label{def:ji}
K  =   \frac1{\alpha(\alpha-1)}\int_{\R_\xi} d\xi \dot{q^0} (\xi) \int_{\R_{z_1}} dz_1 
\dot{q^0} (\xi + z_1) |z_1|^{1 -\alpha} \, .
\end{equation}
By integrating by parts in $z_1$ and $\xi$ and $z_1$ successively, we obtain
\begin{eqnarray*}
J_i & = & \frac1\alpha \int_{\R_\xi} d\xi \dot{q^0} (\xi) \int_{\R_{z_1}} dz_1 
( q^0 (\xi + z_1) -q^0 (\xi))|z_1|^{-(1+\alpha)} z_1 \\
& =& -\frac1\alpha \int_{\R_\xi} d\xi q^0 (\xi) \int_{\R_{z_1}} dz_1 
( \dot{q^0} (\xi + z_1) - \dot{q^0} (\xi) ) |z_1|^{-(1+\alpha)} z_1 \\
& =&  -\int_{\R_\xi} d\xi q^0 (\xi)  \int_{\R_{z_1}} dz_1 
( q^0 (\xi + z_1)- q^0 (\xi) - \dot{q^0} (\xi) z_1  ) |z_1|^{-\alpha} \\
& =&  \int  q^0 (\xi) (-\Delta)^{(\alpha-1) /2} [q^0] (\xi) d \xi \\
 & = &  \int \int (q^0 (\xi+z_1) - q^0 (\xi))^2 \frac{dz_1}{|z_1|^\alpha} d\xi \, .
\end{eqnarray*}

\paragraph{Acknowledgments.} The first author was partially supported by ANR project ``MICA'' (ANR-06-BLAN-0082)
 and ``EVOL'' (). 

\bibliographystyle{siam}

\end{document}